
\documentclass{article}

\usepackage{microtype}
\usepackage{graphicx}
\usepackage{subfigure}
\usepackage{booktabs} 
\DeclareFontFamily{U}{matha}{\hyphenchar\font45}
\DeclareFontShape{U}{matha}{m}{n}{
<5>matha5<6>matha6<7>matha7<8>matha8<9>matha9
<10><10.95>matha10
<12><14.4><17.28><20.74><24.88>matha12
}{}
\DeclareSymbolFont{matha}{U}{matha}{m}{n}
\DeclareFontSubstitution{U}{matha}{m}{n}
\DeclareMathSymbol{\ovoid}{\mathbin}{matha}{"6C}

\usepackage{amssymb}
\usepackage{pifont}
%
\usepackage{hyperref}
\usepackage{verbatim}


\usepackage[accepted]{icml2025}

\usepackage{amsmath}
\usepackage{amssymb}
\usepackage{mathtools}
\usepackage{amsthm}
\usepackage[capitalize,noabbrev]{cleveref}

\newcommand{\Argmax}{\mathop{\rm argmax}}
\newcommand{\cA}{\mathcal{A}}

\newcommand{\cS}{\mathcal{S}}

\newcommand{\cM}{\Delta}

\newcommand{\cP}{\mathcal{P} }

\newcommand{\expec}{\mathbb{E}}
\newcommand{\prob}{\mathbb{P}}
\newcommand{\T}{\mathcal{T}}

\newcommand{\real}{\mathbb{R}}
\newcommand{\NN}{\mathbb{N}}

\newcommand{\reals}{\mathbb{R}}

\newcommand*{\infn}[1]{\left\|{#1}\right\|_{\infty}}
\renewcommand{\sp}[1]{\|{#1}\|_\text{\rm sp}}
\newcommand{\maxA}{\mbox{$\max_\cA$}}
\newcommand{\X}{\mathcal{M}}
\newcommand{\Fix}{\mathop{\rm Fix}}
\newcommand{\R}{r}
\newcommand{\st}{N}
\newcommand{\cupdot}{\mathbin{\mathaccent\cdot\cup}}
\definecolor{commentgreen}{RGB}{24,202,85}

\newenvironment{manualtheorem}[1]{%
  \manualtheoreminner
}{\endmanualtheoreminner}

\newenvironment{manualcorollary}[1]{%
  \manualcorollaryinner
}{\endmanualtheoreminner}

\newenvironment{manualproposition}[1]{%
  \manualpropinner
}{\endmanualpropinner}


\theoremstyle{plain}
\newtheorem{theorem}{Theorem}[section]
\newtheorem{proposition}[theorem]{Proposition}
\newtheorem{lemma}[theorem]{Lemma}
\newtheorem{corollary}[theorem]{Corollary}
\theoremstyle{definition}

\theoremstyle{remark}

\usepackage[textsize=tiny]{todonotes}

\icmltitlerunning{Near-Optimal Sample Complexity for MDPs via Anchoring}

\begin{document}
\twocolumn
[
\icmltitle{Near-Optimal Sample Complexity for MDPs via Anchoring}



\icmlsetsymbol{equal}{*}

\begin{icmlauthorlist}
\icmlauthor{Jongmin Lee}{ed}
\icmlauthor{Mario Bravo}{comp}
\icmlauthor{Roberto Cominetti}{sch}
\end{icmlauthorlist}

\icmlaffiliation{ed}{Department of Mathematical Sciences, Seoul National University, Seoul, Korea}
\icmlaffiliation{comp}{Facultad de Ingenier\'ia y Ciencias, Universidad Adolfo Ib\'a\~nez, Santiago, Chile}
\icmlaffiliation{sch}{Institute for 
Mathematical \& Computational Engineering and Department of Industrial and Systems Engineering,  Pontificia Universidad Cat\'olica de Chile, \!Santiago, \!Chile}

\icmlcorrespondingauthor{\,Jongmin Lee}{dlwhd2000@snu.ac.kr}
\icmlcorrespondingauthor{Mario Bravo}{mario.bravo@uai.cl}
\icmlcorrespondingauthor{Roberto Cominetti}{roberto.cominetti@uc.cl}

\vskip 0.3in
]



\printAffiliationsAndNotice{}  


\begin{abstract}
We study a new model-free algorithm to compute  $\varepsilon$-optimal policies for average reward Markov decision processes, in the weakly communicating setting. Given a generative model, our procedure combines a recursive sampling technique with Halpern's anchored iteration, and computes an $\varepsilon$-optimal policy with sample and time complexity $\widetilde{O}(|\cS||\cA|\sp{h^*}^{2}/\varepsilon^{2})$ both in high probability and in expectation.  To our knowledge, this is the best complexity among model-free algorithms, matching the known lower bound up to a factor $\sp{h^*}$.  Although the complexity bound involves the span seminorm $\sp{h^*}$ of the unknown  bias vector, the algorithm requires no  prior knowledge  and  implements a stopping rule 
which guarantees  with probability 1 that the procedure terminates in finite time. We also analyze how these techniques can be adapted for discounted MDPs.
\end{abstract}

\section{Introduction}\label{intro}

A main task in reinforcement learning is to compute an $\varepsilon$-optimal policy for Markov Decision Processes (MDPs) when the transition probabilities are  unknown.
If one has access to a generative model that provides  independent samples of the next state for any given initial state and action, a standard approach in {\em model-based} algorithms is to  approximate the transition probabilities by sampling to a sufficiently high accuracy, and build a surrogate model which is then solved by dynamic programming techniques. In contrast, {\em model-free} methods recursively approximate a solution of the Bellman equation for the value function, with lower memory requirements, specially  when combined with function approximation techniques for large scale problems.

The sample complexity of model-free algorithms has been studied both in the discounted and the average reward setups. 
While for discounted MDPs these procedures have been shown to achieve optimal sample complexity, previous results for average rewards are less satisfactory and exhibit a substantial gap with respect to the known lower bound. Furthermore, most prior model-free and model-based algorithms for average reward MDPs require for their implementation
some {\em a priori} bound on the mixing times or on the span seminorm of the { bias vector}.

\subsection{Our contribution} We propose a new model-free algorithm to compute an $\varepsilon$-optimal policy for weakly communicating average reward MDPs. We show that, with probability at least $1-\delta$, the algorithm achieves the task with a sample and time complexity of order ${O}(\widetilde{L}|\cS||\cA|\sp{h^*}^{2}/\varepsilon^{2})$. Here
 $\cS$ and $\cA$ are the state and action spaces of the MDP and $\sp{h^*}$ is the span seminorm of the bias vector described later in \cref{se:model}, while $\widetilde{L}= \ln (|\cS||\cA|\sp{h^*}/(\varepsilon \delta))\ln^4 (\sp{h^*}/\varepsilon)$
 is a logarithmic multiplicative factor. 
This yields a near-optimal algorithm since this complexity matches the known lower bound up to a factor $\sp{h^*}$. More explicit estimates of the algorithm's complexity are discussed in \cref{Se:complexity2}.

 Our algorithm can be implemented without any prior knowledge about $\sp{h^*}$ nor other characteristics of the optimal policy. Also, it implements a stopping rule which guarantees with probability 1 that the method finds an $\varepsilon$-optimal policy in finite time, with the above complexity bound holding both in high probability as well as in expectation. 
This compares favorably to previous model-free and model-based methods
     which usually require some prior estimates of the span seminorm $\sp{h^*}$, or an upper bound for the maximal mixing time $t_{\text{mix}}$ (see \cref{Tabla1} and \cref{sec:prev_work}). 
    To our knowledge, the  only  algorithm with explicit complexity guarantees and which does not require prior knowledge is the model-based method  in the recent paper by \citet{tuynman2024finding}, which however applies for communicating MDPs, achieving a complexity bound of $\widetilde{O}(|\cS||\cA|\,D/\varepsilon^{2} + |\cS|^2|\cA|D^2)$, where $D$ stands for the diameter of the MDP.

\begin{table*}[!ht] \label{table1}
\centering
\begin{tabular}{|lllcc|} 
 \hline
{\bf Previous works} &{\bf Sample complexity}& {\bf Method type}&{\bf MDP class} & {\bf Prior knowledge} \\ 
 \hline\hline&&&&\\[-2.1ex]
 \citet{wang2017primal} & $\widetilde{O}\left({|\cS||\cA|\tau^2 t^2_{\text{mix}}}/{\varepsilon^{2}}\right)$  & ~model-free &M& $\ovoid$ \\[0.1ex]
\citet{jin2020efficiently}& $\widetilde{O}\left({|\cS||\cA|t^2_{\text{mix}}}/{\varepsilon^{2}}\right)$  & ~model-free &M& $\ovoid$
 \\[0.1ex] 
 \citet{jin2021towards}& $\widetilde{O}\left({|\cS||\cA|t_{\text{mix}}}/{\varepsilon^{3}}\right)$ & ~model-based  &M& $\ovoid$
 \\[0.1ex]
 \citet{li2024stochastic}& $\widetilde{O}\left({|\cS||\cA|t^3_{\text{mix}}}/{\varepsilon^{2}}\right)$ & ~model-free  & M& $\ovoid$\\[0.1ex]
 \citet{wang2023optimal}& $\widetilde{O}\left({|\cS||\cA|t_{\text{mix}}}/{\varepsilon^{2}}\right)$ & ~model-based  &M& $\ovoid$\\[0.1ex]
 \citet{wang2022near} & $\widetilde{O}\left({|\cS||\cA|\sp{h^*}}/{\varepsilon^{3}}\right)$& ~model-based  & W&  $\ovoid$\\[0.1ex]
  \citet{zhang2023sharper}& $\widetilde{O}\left({|\cS||\cA|\sp{h^*}^{2}}/{\varepsilon^{2}}\right)$& ~model-free  &  W& $\ovoid$
\\[0.1ex]
\citet{zurek2024span}& $\widetilde{O}\left({|\cS||\cA|\sp{h^*}}/{\varepsilon^{2}}\right)$& ~model-based   & W& $\ovoid$\\[0.1ex]
\citet{zurek2024span}& $\widetilde{O}\left({|\cS||\cA|(b_{\text{tran}}+\sp{h^*})}/{\varepsilon^{2}}\right)$& ~model-based   & G& $\ovoid$\\[0.6ex]\hline\hline
&&&&\\[-2.1ex]
 \citet{jin2024feasible}& $\widetilde{O}\left({|\cS||\cA|t^8_{\text{mix}}}/{\varepsilon^{8}}\right)$ & ~model-free &M& $\triangle$ \\[0.1ex]
 \citet{bravostochastic}& $\widetilde{O}\left({|\cS||\cA|\sp{h^*}^7}/{\varepsilon^{7}}\right)$ & ~model-free  &W& $\triangle$ \\[0.1ex]
\citet{zurek2024plug}& $\widetilde{O}\left(|\cS||\cA|\sp{h^*}^2/{\varepsilon^{2}}\right)$ & ~model-based &W& $\triangle$\\[0.1ex]
  This work (\cref{thm:err})& $\widetilde{O}\left({|\cS||\cA|\sp{h^*}^{2}}/{\varepsilon^{2}}\right)$& ~model-free  & W&  $\triangle$ \\[0.6ex]\hline\hline
  &&&&\\[-2.1ex]
 \citet{tuynman2024finding}& $\widetilde{O}\left({|\cS||\cA|D}/{\varepsilon^{2}}\right)$& ~model-based    &C& \scalebox{1.3}{$\times$}   \\[0.1ex]
   This work (\cref{cor:err})& $\widetilde{O}\left({|\cS||\cA|\sp{h^*}^{2}}/{\varepsilon^{2}}\right)$ & ~model-free &W&  \scalebox{1.3}{$\times$}\\[0.6ex]
 \hline
\end{tabular}
 \vspace{-0.05in}
\caption{{\bf Algorithms for average reward MDPs.} The table shows the leading term in the sample complexity for computing an $\varepsilon$-optimal policy with probability at least $1-\delta$. The notation $\widetilde{O}(\cdot)$ hides logarithmic factors including $\log(1/\delta)$. The column `MDP class' distinguishes the type of MDP to which the algorithm applies: (M) uniformly mixing or ergodic MDPs with finite mixing times $t_{\text{mix}}$; (C) communicating MDPs with finite diameter $D$; (W)  weakly communicating MDPs with bias vector $h^*$; and (G) general MDPs with transient parameter $b_{\text{tran}}$.
The symbol `$\ovoid$' denotes an algorithm that requires prior knowledge of $t_{\text{mix}}, \sp{h^*}$, or $b_{\text{tran}}$ for its implementation. Similarly,
`$\triangle$' stands for  
 algorithms that run without prior knowledge but require it to fix the number of iterations or the number of samples.
Finally `$\times$' applies to algorithms that do not require any  prior knowledge. For a more thorough discussion see \cref{sec:prev_work}. 
}
\label{Tabla1}
\end{table*}

The core of our analysis builds on the following key ideas. First, we show that 
any approximate solution of the Bellman equation with an $\varepsilon$  residual error  yields an $\varepsilon$-optimal policy. To achieve an accurate empirical estimation of the Bellman residual error, we use a recursive sampling technique analyzed using an Azuma-Hoeffding-like inequality. Leveraging these estimates, we draw on concepts from the domain of fixed-points for nonexpansive maps, where the classical Halpern (or Anchored) iteration is known to be efficient (see \cref{sec:prev_work} for a detailed discussion). This combined approach represents our main  contribution and, to the best of our knowledge, is being applied for the first time to average reward MDPs in the generative setup. In the last section we adapt these ideas to study a similar algorithm for discounted MDPs and compare with other existing methods.

\paragraph{Notations.} For a finite set $J$ we denote $\cM(J)$ the set of probability distributions over $J$. Also, for $x\in\reals^J$ we denote by $\|x\|_{\infty}=\max_{j\in J}|x(j)|$ its infinity norm and by $\sp{x}=\max_{j\in J}x(j)-\min_{j\in J}x(j)$ its  span seminorm. For $x\in\reals^{J}$ and a real number $c\in\reals$, we denote $x-c$ the vector with components $x(j)-c$ for all $j\in J$. 

\subsection{The model} \label{se:model} Let us recall some basic facts about {\em average reward Markov decision processes}. These
 can be found in the classical references \citep{bertsekas2015dynamic,10.5555/528623,  sutton2018reinforcement}.
We consider a Markov decision process $(\cS, \cA, \cP, \R)$ with finite state space $\cS$, finite action space $\cA$, transition kernel $\cP\colon \cS \times \cA \rightarrow \cM(\cS)$, and reward $\R\colon  \cS \times \cA \rightarrow \real$. 
Given an initial state $s_0=s$ and a stationary and deterministic control policy $\pi\colon \cS \rightarrow \cA$, the average reward or gain
$g_\pi\in\reals^\cS$
is defined by
\vspace{-0.5ex}
\begin{align*}
    g_{\pi}(s)&=\lim_{n\rightarrow \infty} \,\mbox{$\expec_{\pi}\!\left[\frac{1}{n}\sum^{n-1}_{t=0}  \R(s_t, a_t) \,|\, s_0=s\right]$}
\end{align*}
 where $\expec_{\pi}$ denotes the expected value over all trajectories $(s_0, a_0, s_1, a_1, \dots, s_{t}, a_{t},\dots)$ of the Markov chain induced by the control $a_t=\pi(s_t)$ with transitions $s_{t+1}\sim\cP(s_t,a_t)$. 
The optimal reward is then $g^*(s)=\max_{\pi}g_{\pi}(s)$. A policy $\pi$ is called $\varepsilon$-optimal if $0\leq g^*(s)-g_\pi(s)\leq\varepsilon$ for all  $s\in\cS$. 

Although one could consider more elaborated history-dependent randomized policies $\pi$, when $\cS$ and $\cA$ are finite the optimum
is already attained with the simpler deterministic stationary policies \citep{10.5555/528623}, so we restrict to such policies $\pi$ and we denote $P_\pi$ the transition matrix of the induced Markov chain, namely, $P_\pi(s'|s)=\cP(s'|s,\pi(s))$.

\paragraph{Classification of MDPs.}
An MDP is called \emph{unichain} if the Markov chain $P_\pi$ induced by every policy $\pi$ has a single recurrent class plus a possibly empty set of transient states\footnote{For basic concepts in Markov chains such as recurrent classes, transient states, accessibility, irreducibility, etc., we refer to \citet[Appendix A.2]{10.5555/528623}.}. The MDP is said to be \emph{weakly communicating} if there is a set of states where each state in the set is accessible from every other state in that set under some policy, plus a possibly empty set of states that are transient for all policies. Otherwise, the MDP is called \emph{multichain} in which case each $P_\pi$ may have multiple recurrent classes. Unichain MDPs are weakly communicating, while multichain is the weakest notion.  In this paper we set our 
study in the middle ground by adopting the following standing assumption:\\[1ex]
\centerline{\em {\sc (H)}~~The MDP $(\cS, \cA, \cP, \R)$ is weakly communicating.}

\paragraph{Bellman's equation.}
An optimal policy $\pi$ can be obtained by first solving the following  Bellman equation for the {\em gain} $g\in\reals^\cS$ and {\em bias} $h\in\reals^\cS$, namely: for all $s\in\cS$
\vspace{-1ex}
\begin{align*}
 h(s)+g(s)=\max_{a\in\cA}\big\{\R(s,a)+\sum_{s'\in\cS}\,\cP(s'|s,a)\,h(s')\big\},
\end{align*}

\vspace{-3ex}
and then taking actions $\pi(s)=a\in\cA$ 
that attain the max. 
This Bellman equation always has solutions \citep[Corollary~9.1.5]{10.5555/528623}. Under the assumption (H) the gain $g(s)\equiv g^*$ is constant across all states with $g^*$ the optimal average reward \citep[Theorems~8.3.2, 8.4.1, 9.1.2, 9.1.6] {10.5555/528623} so that hereafter we treat the gain  $g^*(s)$ as a scalar $g^*\in\reals$. Moreover, \citep[Theorems~9.1.7, 9.1.8] {10.5555/528623} show that choosing actions that maximize the right hand side in the Bellman equation yields an optimal policy. 

Bellman's equation can be rewritten in terms of the so-called $Q$-factors. Namely, denoting $Q(s,a)$ the expression inside the maximum, the system becomes: for all $(s,a)\in\cS\times\cA$
\vspace{-1ex}
\begin{align*}
Q(s,a)+g^*=\R(s,a)+\sum_{s'\in\cS}\cP(s'|s,a)\,\max_{a'\in\cA}Q(s',a'),
\end{align*}

\vspace{-3ex}
and then an optimal policy can  be obtained by selecting  $\pi(s)\in\Argmax_{a\in\cA}Q(s,a)$. 

This $Q$-version of the Bellman equation can be expressed more compactly  as the fixed-point equation\footnote{Recall that for a matrix $B\in\reals^{\cS\times\cA}$ and a real number $g\in\reals$, the operation $B-g$ subtracts $g$ from each component $B(s,a)$.}
\vspace{-1ex}
$$Q=r+\cP\,\maxA(Q)-g^*$$

\vspace{-2ex}
where $\maxA:\reals^{\cS\times\cA}\to \reals^{\cS}$ assigns to  each $Q\in  \reals^{\cS\times\cA}$ the vector $h\in\reals^\cS$ with $h(s)=\max_{a\in\cA} Q(s,a)$, 
whereas $\cP :\reals^\cS\to\reals^{\cS\times\cA}$ transforms each $h\in \reals^\cS$ into the matrix  $\cP h\in\reals^{\cS\times\cA}$ with $\cP h(s,a)=\sum_{s'\in\cS}\cP(s'|s,a)h(s')$.

In what follows we denote $\T:\reals^{\cS\times\cA}\to \reals^{\cS\times\cA}$ the map $\T(Q)=\R+\cP\,\maxA(Q)$ 
so that Bellman's equation reduces to $Q=\T_{g^*}(Q)$
with $\T_{g^*}(Q)=\T(Q)-g^*$.  
We denote $Q^*$ an arbitrary fixed-point, and $h^*=\maxA(Q^*)$
the corresponding solution for the original form of Bellman's equation.

\subsection{Previous work} \label{sec:prev_work}
\paragraph{Average reward MDPs.}
The setup of average reward MDPs was introduced  in the dynamic programming literature by \citet{howard1960dynamic}, while \citet{blackwell1962discrete} established a theoretical framework for their analysis. Reinforcement learning provides a variety of methods to approximately solve average reward MDPs in the case where the transition matrix and reward are unknown \citep{mahadevan1996average, dewanto2020average}. These methods include model-based algorithms \citep{jin2021towards, zurek2024span}, model-free algorithms \citep{wei2020model, wan2021learning}, policy gradient methods \citep{bai2024regret, kumar2024global}, and mirror descent  \citep{murthy2023convergence}.

\paragraph{Model-based methods with a generative model.}
Model-based algorithms use sampling to estimate the transition probabilities of the MDP, and then solve the surrogate model by using standard dynamic programming techniques. These methods can achieve optimal sample complexity in both the discounted and average reward setups. For discounted MDPs, \citet{agarwal2020model, li2020breaking} achieve optimal sample complexity by matching the lower bound in \citet{gheshlaghi2013minimax}. For average reward MDPs satisfying a uniform mixing condition, \citet{jin2021towards} proposed a model-based algorithm based on a reduction to a discounted MDP. Also using reductions to discounted MDPs, \cite{wang2022near}
relaxed the uniform mixing assumption to deal with weakly communicating MDPs; \citet{wang2023optimal} achieved optimal sample complexity assuming finite mixing times; and,  \citet{zurek2024span} achieved optimal complexity for weakly communicating and multichain MDPs.

\paragraph{Model-free methods with a generative model.}
Most model-free algorithms directly estimate the Q-factors and policy without learning a model of the transition probabilities. They are more efficient in terms of  computation and memory requirements, and can tackle large scale problems when combined with function approximation. 

In the generative model setup \citep{kearns1998finite} the sample complexity of model-free algorithms has been widely studied. For  discounted MDPs they achieve  optimal sample complexity matching the complexity lower bound \cite{sidford2018near,wainwright2019variance,jin2024truncated}.
For average rewards, \cref{Tabla1} presents a summary of the sample complexity of previous model-free and model-based algorithms. Specifically, for MDPs with finite mixing times $t_{\text{mix}}$, \citet{wang2017primal}  developed a model-free method that applies a primal-dual algorithm to a bilinear saddle point reformulation of the Bellman equation.  Also under the mixing  condition, \citet{jin2020efficiently} use a stochastic mirror descent framework to solve bilinear saddle point problems, whereas \citet{li2024stochastic} studied an actor-critic method also based on stochastic mirror descent. \citet{zhang2023sharper} first obtained sample complexity results for weakly communicating MDPs, by applying Q-learning with variance reduction to an approximation by a discounted 
MDP. Lower bounds on the sample complexity for mixing and weakly communicating MDPs, with orders of $t_{\text{mix}}/\varepsilon^{2}$ and $\sp{h^*}/\varepsilon^{2}$, respectively, were established  by \citet{jin2021towards} and \citet{wang2022near}.

\paragraph{Prior knowledge.}
A drawback of these previous model-based and model-free algorithms is that their implementation requires prior estimates for the
mixing times $t_{\text{mix}}$ of the Markov chains induced by all policies, 
or the span seminorm  $\sp{h^*}$ of some solution of Bellman's equations. 
Several methods  that partially overcome this limitation have been proposed recently. In the case of finite but unknown mixing times, \citet{jin2024feasible} combine Q-learning and approximation by discounted MDPs with progressively larger discount factors, presenting a model-free algorithm with sample complexity of order  $t_{\text{mix}}^8/\varepsilon^8$. 
For weakly communicating MDPs, \citet{bravostochastic} develop a model-free value iteration that combines Halpern's anchoring with mini-batching, achieving a sample complexity  of order $\sp{h^*}^7/\varepsilon^{7}$.
 Also in the weakly communicating setting,
 \citet{zurek2024plug} recently proposed a model-based algorithm that uses a discounted MDP approximation with a sufficiently large discount factor, which is then solved by plugging in a generic subroutine, achieving a complexity of
$\sp{h^*}^2/\varepsilon^2$.
Additionally,   \citet{neu2024dealing} use a primal-dual stochastic gradient descent combined with a regularization technique, although the sample complexity is not stated in terms of the intrinsic characteristics of the MDP but in terms of the output of the algorithm, namely, using the expected value of the gain of the policy generated by the method.

Although the implementation of these methods does not involve explicitly the mixing time $t_{\text{mix}}$, the diameter $D$, or the span seminorm $\sp{h^*}$ of a bias vector,
they do need an estimate of these quantities in order to fix the number of iterations that need to be run or the number of samples to collect in order to guarantee $\varepsilon$-optimality. In this sense, they fail to fully dispense for the need of prior knowledge.
To the best of our knowledge, the only algorithms that can run without prior knowledge and guarantee at termination an $\varepsilon$-optimal policy are the model-based algorithm by \citet{tuynman2024finding}, which applies to communicating MDPs by estimating the diameter $D$, and the model-free algorithm {SAVIA\text{+}} presented in this paper which applies to the larger class of weakly communicating MDPs by implementing an effective stopping rule based on the empirical Bellman residual.
These latter algorithms achieve sample complexities of order $D/\varepsilon^2$ and $\sp{h^*}^2/\varepsilon^2$  respectively.

\paragraph{Value Iterations.}
Value iterations (VIs)---an instantiation of the Banach-Picard fixed point iterations---were among the first methods considered in the dynamic programming literature \citep{bellman1957markovian} and serve as a fundamental algorithm to compute the value function for discounted MDPs as well as unichain average reward MDPs. The sample-based variants, such as TD-Learning~\citep{Sutton1988}, Fitted Value Iteration~\citep{Ernst05,Munos08JMLR}, and Deep Q-Network~\citep{MnihKavukcuogluSilveretal2015}, are the workhorses of modern reinforcement learning algorithms~\citep{Bertsekas96,sutton2018reinforcement,SzepesvariBook10}. VIs are also routinely applied in diverse settings, including factored MDPs \citep{rosenberg2021oracle}, robust MDPs \citep{kumarefficient}, MDPs with reward machines \citep{bourel2023exploration}, and MDPs with options \citep{fruit2017regret}. In the generative model setup, variance reduction sampling was applied to approximate VIs for discounted rewards: \citet{sidford2023variance, sidford2018near} use precomputed offsets to reduce variance of sampling, \citet{wainwright2019variance} applies SVRG-type variance reduction sampling \cite{johnson2013accelerating} to Q-learning, and \citet{jin2024truncated} use SARAH-type variance reduction sampling \citep{nguyen2017sarah} which we also exploit in this work.

\paragraph{Halpern iterations.}
For $\gamma$-contractions on Banach spaces,  the classical Banach-Picard iterates $x^{k+1}=T(x^{k})$ converge to the unique fixed point $x^*=T(x^*)$, with explicit bounds for the residuals $\|T(x^k)-x^k\|\leq \gamma^k\|T(x^0)-x^0\|$
and the distance 
$\|x^k-x^*\|\leq \frac{\gamma^k}{1-\gamma}\|T(x^0)-x^0\|$ to the fixed point. This fits well for discounted MDPs, and is also useful in the average reward setting for unichain MDPs. 

For nonexpansive maps with $\gamma=1$, as it is the  case for average reward MDPs, these estimates degenerate and provide no useful information. 
An alternative is provided by Halpern's iteration $x^{k+1}\!=\!(1-\beta_{k+1})x^0+\beta_{k+1} T(x^{k})$, where the next iterate is computed as a convex combination between $T(x^{k})$ and the initial point $x^0$ which acts as an {\em anchor} point along the iterations \citep{halpern1967fixed}. The sequence
$\beta_k\in (0,1)$ is chosen exogenously and increasing to 1, so that the strength of the anchor mechanism diminishes as the iteration progresses. 

Halpern's anchored iteration has been widely studied in minimax optimization and fixed-point problems \citep{halpern1967fixed,sabach2017first, Lieder2021halpern, park2022exact, contreras2022optimal, yoon2021accelerated,cai2022stochastic}. In the context of reinforcement learning, \citet{lee2024accelerating, lee2025multi} applied the anchoring technique to VIs achieving an accelerated convergence rate for cumulative-reward MDPs and the first non-asymptotic rate for average reward multichain MDPs. As mentioned, \citet{bravostochastic} applied the anchoring mechanism to Q-learning for average reward MDPs with a generative model.

Assuming that the set of fixed points $\Fix(T)$ is nonempty, and under suitable conditions on $\beta_k$, Halpern's iterates have been proved to converge towards a fixed point in the case of Hilbert spaces \citep{wittmann1992approximation} as well as in uniformly smooth Banach spaces \citep{reich1980strong, xu2002iterative}. 
For more general normed spaces, the analysis in \citet[Lemma 5]{sabach2017first} implies that for $\beta_k=k/(k+2)$ one has the 
explicit error bound $\|T(x^k)-x^k\|\leq \frac{4}{k+1}\|x^0-x^*\|$.
The proportionality constant in this bound was recently improved in the Hilbert setting by \citet{Lieder2021halpern, kim2021accelerated}. For a comprehensive analysis, including the determination of the optimal Halpern iteration, see \citet{ contreras2022optimal}.

\section{Framework overview}
\subsection{Anchored Value Iteration}
As discussed in \cref{se:model}, Bellman's equation is equivalent to the fixed point equation
$Q=\T_{g^*}(Q)$
where $\T_{g^*}(Q)=\T(Q)-g^{\star}$ with $\T(Q)=\R+\cP\,\maxA(Q)$. Since $\maxA$ and $\cP$ are nonexpansive for the norm $\big\|\cdot\big\|_\infty$ in the corresponding spaces, the same holds for $\T_{g^*}$ and one might consider Halpern's iteration 
\begin{equation}\label{Halpern}
Q^{k+1}=(1-\beta_{k+1})\, Q^0+ \beta_{k+1}\, \T_{g^*}(Q^{k}).
\end{equation}
This recursion involves the unknown optimal value $g^*$. However, since $\T(Q+c)=\T(Q)+c$ is homogeneous by addition of constants $c\in\reals$,
one can interpret \eqref{Halpern} in the quotient 
quotient space $\X=\reals^{\cS\times\cA}/E$ with $E$ the subspace of constant matrices, and consider instead the \emph{implementable} iteration
where $g^*$ is removed
\begin{align}
    Q^{k+1}  =(1-\beta_{k+1})\, Q^0 +\beta_{k+1}\,\T(Q^{k}). \tag{Anc-VI}
    \label{eq:Anc-VI}
\end{align}
 As explained in \cref{ap:quotient}, both \eqref{Halpern} and \eqref{eq:Anc-VI}
are equivalent up to constants and can be interpreted as a standard Halpern iteration in the quotient space $\X$.

\subsection{Reducing both Bellman residual and policy error }
From the previous observations, it follows that the error bounds for Halpern's iteration directly translate into error bounds for \eqref{eq:Anc-VI} in span seminorm. 
In particular, \citet[Lemma 5]{sabach2017first} implies that for $\beta_k=k/(k+2)$ \eqref{eq:Anc-VI} the Bellman residual error converge to zero with the explicit 
bound in span seminorm
$$\sp{Q^k-\T(Q^k)}\leq\mbox{$\frac{4}{k+1}$}\sp{Q^0-Q^*}$$ where $Q^{*}$ is any solution of Bellman's equation. 
Moreover, for weakly communicating MDPs we can prove that any $Q$ with a small residual $\sp{Q-\T(Q)}$ yields an approximately optimal policy. More precisely, adapting \citep[Theorem 9.1.7]{10.5555/528623}, we have following result.   
\begin{proposition}\label{prop:err}
 Let $Q\in\reals^{\cS\times\cA}$ and $\pi:\cS\to\cA$ a greedy policy such that $\pi(s)\in\Argmax_{a\in\cA}~Q(s,a)$. Then for all states $s\in \cS$ we have $0\leq g^*-g_\pi(s)\leq\sp{Q-\T(Q)}$.
\end{proposition}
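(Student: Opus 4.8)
The plan is to reduce the statement to two one-sided estimates on the gain in terms of the extreme values of the Bellman residual. Write $h=\maxA(Q)$ and $\rho=\T(Q)-Q\in\reals^{\cS\times\cA}$, and set $c=\min_{(s,a)}\rho(s,a)$ and $C=\max_{(s,a)}\rho(s,a)$, so that $\sp{\T(Q)-Q}=C-c$. For an arbitrary stationary deterministic policy $\pi'$ write $r_{\pi'}\in\reals^{\cS}$ for the vector $r_{\pi'}(s)=\R(s,\pi'(s))$ and $\rho_{\pi'}\in\reals^{\cS}$ for $\rho_{\pi'}(s)=\rho(s,\pi'(s))$, and recall the classical Cesàro representation of the gain $g_{\pi'}=P^{\infty}_{\pi'}r_{\pi'}$, where $P^{\infty}_{\pi'}=\lim_{N\to\infty}\frac1N\sum_{t=0}^{N-1}P_{\pi'}^{\,t}$ (see \citet{10.5555/528623}). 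The only facts I will use about $P^{\infty}_{\pi'}$ are that it is row-stochastic — hence nonnegative, order-preserving, and mapping any constant vector to itself — and that $P^{\infty}_{\pi'}P_{\pi'}=P^{\infty}_{\pi'}$, so that $P^{\infty}_{\pi'}(I-P_{\pi'})=0$. The left inequality $0\le g^*-g_\pi(s)$ is then immediate, since $g^*(s)=\max_{\pi'}g_{\pi'}(s)\ge g_\pi(s)$; hence the whole content is the right inequality, and for that it suffices to prove $g^*\le C$ and $g_\pi\ge c$ (reading scalars as constant vectors, as in \cref{se:model}).

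For $g^*\le C$, I would fix an arbitrary $\pi'$ and observe that, componentwise, $r_{\pi'}(s)+(P_{\pi'}h)(s)-\rho_{\pi'}(s)=\T(Q)(s,\pi'(s))-\rho(s,\pi'(s))=Q(s,\pi'(s))\le\max_a Q(s,a)=h(s)$, which rearranges to $r_{\pi'}\le (I-P_{\pi'})h+\rho_{\pi'}\le (I-P_{\pi'})h+C$. Applying the nonnegative operator $P^{\infty}_{\pi'}$ and using $P^{\infty}_{\pi'}(I-P_{\pi'})=0$ together with the fact that $P^{\infty}_{\pi'}$ fixes the constant vector $C$, we get $g_{\pi'}=P^{\infty}_{\pi'}r_{\pi'}\le C$; maximizing over $\pi'$ yields $g^*\le C$. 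For $g_\pi\ge c$, the hypothesis $\pi(s)\in\Argmax_a Q(s,a)$ turns the previous inequality into the equality $r_{\pi}(s)+(P_{\pi}h)(s)-\rho_\pi(s)=Q(s,\pi(s))=h(s)$, hence $r_\pi=(I-P_\pi)h+\rho_\pi\ge (I-P_\pi)h+c$, and applying $P^{\infty}_{\pi}$ exactly as above gives $g_\pi=P^{\infty}_\pi r_\pi\ge c$. Combining the two bounds, $g^*-g_\pi(s)\le C-c=\sp{\T(Q)-Q}$ for every $s$, which together with the left inequality is the claim; this is the announced adaptation of \citet[Theorem~9.1.7]{10.5555/528623} to the case of a nonzero residual.

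I expect the difficulties here to be bookkeeping rather than conceptual. The two points to handle with care are: (i) matching $g_{\pi'}=P^{\infty}_{\pi'}r_{\pi'}$ with the trajectory definition of $g_{\pi'}$ from \cref{se:model}, which follows from $\expec_{\pi'}[\R(s_t,a_t)\mid s_0=s]=(P_{\pi'}^{\,t}r_{\pi'})(s)$ and the convergence of the Cesàro averages of the powers $P_{\pi'}^{\,t}$; and (ii) keeping the directions of the componentwise inequalities consistent when passing them through the order-preserving operator $P^{\infty}_{\pi'}$, and remembering that $P^{\infty}_{\pi'}$ sends a constant vector to itself while annihilating $(I-P_{\pi'})$. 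Note that the argument uses only finiteness of $\cS$ and $\cA$; assumption~(H) enters solely through the identification of $g^*$ with the constant optimal gain.
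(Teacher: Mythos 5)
Your proof is correct and follows essentially the same route as the paper's: both rest on the identity $g_{\pi'}=P^{\infty}_{\pi'}(r_{\pi'}+P_{\pi'}h-h)$ with $h=\maxA(Q)$, recognize $r_{\pi'}+P_{\pi'}h-Q(\cdot,\pi'(\cdot))$ as the $\pi'$-components of $\T(Q)-Q$, and average with the stochastic matrix $P^{\infty}_{\pi'}$ to trap $g_\pi$ and $g^*$ between the min and max of the residual. The only cosmetic difference is that you bound $g_{\pi'}\le C$ uniformly over all policies and then maximize, whereas the paper applies the same estimate to a single fixed optimal policy $\pi^*$; the content is identical.
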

Combining \cref{prop:err} with the general estimate of the Bellman residual error, it follows that 
$$  g^*-g_{\pi_k}(s) \leq \sp{Q^k-\T(Q^k)} \leq\mbox{$\frac{4}{k+1}$}\sp{Q^0-Q^*}$$
where $\pi_k(s)\in\Argmax_{a \in \cA} Q^k(s,a).$ Thus \eqref{eq:Anc-VI} not only reduces the Bellman residual error but it allows to derive $\varepsilon$-optimal policies.

\subsection{Estimating $\T(Q^{k})$ by recursive sampling}
We are interested in the generative setting where the transition kernel $\cP$ is not known but one can generate samples from $\cP(\cdot|s,a)$. In this case \eqref{eq:Anc-VI} cannot be implemented since one cannot compute $\T(Q^{k})$. With a generative model, a natural approach is to approximate $T^{k}\approx \T(Q^{k})$ by computing $h^k=\maxA(Q^k)$ and then collecting samples $\{s_j\}_{j=1}^{m_k}\sim \cP(\cdot|s,a)$  in order to set
$$\mbox{$T^k(s,a)=\R(s,a)+\frac{1}{m_k}\sum_{j=1}^{m_k}h^k(s_j)$}.$$ 
We note that the number of samples required to attain a small error scales with the norm of $h^k$. In order to reduce the sample complexity, we use instead a \emph{recursive sampling} technique borrowed from \citet{jin2024truncated}. 
The basic idea is to exploit the previous approximation $T^{k-1}\approx \T(Q^{k-1})$ and the linearity of the map $\cP$, and to approximate $\T(Q^{k})$ by estimating the difference $\T(Q^{k})-\T(Q^{k-1})=\cP d^k$ with  $d^k=h^k-h^{k-1}$ and adding it to $T^{k-1}$. Specifically, for each $(s,a)$ take $m_k$ samples $s_j\sim\cP(\cdot|s,a)$ and set
$$T^{k}(s,a)=T^{k-1}(s,a)+\mbox{$\frac{1}{m_k}\sum_{j=1}^{m_k}d^k(s_j), \quad k\geq 0$}.$$
 Starting with $T^{-1}=\R\in\reals^{\cS\times\cA}$ and $h^{-1}=0\in\reals^\cS$, and
denoting $D^k\approx\cP d^k$ the matrix sampled at the $k$-th stage, 
we have 
\begin{equation}\label{Eq:telescope}
    \mbox{$T^k=\R+\sum_{i=0}^k D^i$}
\end{equation}
so that all the previous estimates $\{D_i\}^k_{i=0}$ are used to approximate  $\T(Q^{k})$. The subsequent analysis will show that $\sp{d^k}$ can be  significantly smaller than $\sp{h^k}$, achieving a significant reduction in the overall sample complexity.

Our algorithms, to be presented in next section, will use as a subroutine the following generic sampling procedure.
\begin{algorithm}[ht!]
   \caption{\mbox{\sc sample}$(d, m)$}
   \label{alg:sample}
\begin{algorithmic}
   \STATE {\bfseries Input:} 
   $d \in \real^{\cS}$\,;\, $m \in \mathbb{N}$
   \FOR{ $(s,a) \in \cS \times \cA$}
   \STATE $D(s,a)=\frac{1}{m}\sum_{j=1}^m d (s_j)$ with $s_j \stackrel{iid}{\sim} \cP(\cdot \,|\, s,a)$
   \ENDFOR
 \STATE {\bfseries Output:} $D$
\end{algorithmic}
\end{algorithm}

For later reference we observe that this subroutine draws $|\cS||\cA|\,m$ 
samples and its output satisfies $\sp{D}\leq\sp{d}$.
In what follows we focus on the 
sample complexity, that is to say, the total number of samples 
required by the algorithms. If we assume that each sample requires constant $O(1)$-time, then the time complexity of the algorithms will be of the same order as the sampling complexity.
\section{Stochastic Anchored Value Iteration}

 We may now present our method which combines two basic ingredients: an anchored value iteration and recursive sampling. The following {SAVIA} iteration is our basic algorithm which considers a fixed sampling sequence $c_k>0$ and an averaging sequence 
$\beta_k\in (0,1)$ increasing to 1. 
\begin{algorithm}[ht!]
   \caption{\mbox{\sc \mbox{ SAVIA}}$(Q^0,n,\varepsilon,\delta)$}
   \label{alg:example}
\begin{algorithmic}
     \STATE {\bfseries Input:} $Q^{0}\!\in\real^{\cS\times\cA}$\,;\,$n\in\NN$\,;\,$\varepsilon>0$\,;\,$\delta\in(0,1)$
   \STATE $\alpha\hspace{0.8ex}=\ln(2|\cS||\cA|(n\!+\!1)/\delta)$
   \STATE $T^{-1}=\R$\,;\;$h^{-1}=0$\,;\,$\beta_0=0$
    \FOR{ $k=0,\ldots,n$ }
     \STATE $Q^{k} =(1\!-\!\beta_k)\,Q^0+\beta_k\, T^{k-1}$
     \STATE $h^k= \maxA(Q^{k})$
     \STATE $d^k=h^k-h^{k-1}$
    \STATE $m_k= \max\{\lceil \alpha\,c_k\sp{d^k}^2/\varepsilon^{2}\rceil,1\}$
     \STATE $D^k=\mbox{\sc sample}(d^k, m_k)$ 
     \STATE $T^k=T^{k-1}+D^k$ 
   \ENDFOR
   \STATE $\pi^{n}(s) \in \Argmax_{a\in \cA} Q^{n}(s,a)\quad(\forall s\in\cS)$
   \STATE {\bfseries Output:} $(Q^{n} ,T^n, \pi^{n})$ 
\end{algorithmic}
\end{algorithm}

The iteration can start from an arbitrary $Q^0$, including the all-zero matrix. However, if one has a prior estimate $Q^0\approx Q^*$ ({\em e.g.} from a similar MDP solved previously), it may be convenient to start from $Q^0$ since our complexity bounds scale with the distance $\sp{Q^0-Q^*}$. In any case we stress that \mbox{\rm SAVIA}  does not require any prior knowledge and all the parameters in the algorithm are independent of $Q^*$ or $h^*$, including the sequences $\beta_k, c_k$ for which we provide a specific choice in the next subsection. 

\subsection{Sample complexity for the basic {\rm SAVIA} iteration.}\label{Se:complexity1}
In order to study the sample complexity of \mbox{\rm SAVIA} we first establish conditions which ensure  that
 with high probability  the recursive sampling provides good approximations $T^k\approx \T(Q^k)$ for all $k$'s. We only present the main results and ideas, and defer all proofs to the Appendix.

\begin{proposition}\label{prop:sam-err0}
Let $c_k>0$  with \mbox{$2\sum_{k=0}^\infty c_k^{-1}\leq 1$} and $T^k, Q^k$ the iterates generated by \mbox{\rm SAVIA}$(Q^0,n,\varepsilon,\delta)$.
Then, with probability at least $1-\delta$ we have $\|T^k-\T(Q^k)\|_\infty\le  \varepsilon$ simultaneously for 
 all $k=0,\ldots,n$.
\end{proposition}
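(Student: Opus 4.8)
The plan is to control the error $\|T^k - \T(Q^k)\|_\infty$ by exploiting the telescoping structure \eqref{Eq:telescope} together with the fact that $\T(Q^k) = \R + \cP\,\maxA(Q^k) = \R + \cP h^k = \R + \sum_{i=0}^k \cP d^i$, since $h^k = \sum_{i=0}^k d^i$ (using $h^{-1}=0$). Therefore the error decomposes as a sum of martingale-type increments,
\begin{align*}
T^k - \T(Q^k) = \sum_{i=0}^k \big(D^i - \cP d^i\big),
\end{align*}
where for each $i$, conditionally on the history up to stage $i-1$ (which determines $Q^i$, hence $h^i$, $d^i$, and $m_i$), the quantity $D^i - \cP d^i$ is an average of $m_i$ i.i.d. mean-zero terms $d^i(s_j) - \cP d^i(s,a)$, each bounded in absolute value by $\sp{d^i}$ componentwise (after centering). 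First I would fix a coordinate $(s,a)$ and a stage $k$, and view $\{D^i(s,a) - (\cP d^i)(s,a)\}_{i=0}^k$ as a martingale difference sequence with respect to the natural filtration; then apply an Azuma–Hoeffding-type inequality. The key point is that the $i$-th increment, being an average of $m_i$ bounded centered variables, has a per-increment bound of roughly $\sp{d^i}/\sqrt{m_i}$, and by the choice $m_i \ge \alpha\, c_i\, \sp{d^i}^2/\varepsilon^2$ in \mbox{\rm SAVIA}, this is at most $\varepsilon/\sqrt{\alpha c_i}$. Summing squares of these bounds gives a variance proxy of order $(\varepsilon^2/\alpha)\sum_i c_i^{-1} \le \varepsilon^2/(2\alpha)$, so the sub-Gaussian tail yields $\prob(|T^k(s,a) - \T(Q^k)(s,a)| > \varepsilon) \le 2\exp(-\varepsilon^2/(2 \cdot \varepsilon^2/(2\alpha))) = 2e^{-\alpha}$.

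Next I would take a union bound over all $(s,a) \in \cS\times\cA$ and all $k \in \{0,\dots,n\}$, giving a total failure probability at most $2|\cS||\cA|(n+1)e^{-\alpha}$, which equals $\delta$ by the definition $\alpha = \ln(2|\cS||\cA|(n+1)/\delta)$. That is exactly the claimed conclusion. Two technical wrinkles need care. First, the number of increments $m_i$ and the bound $\sp{d^i}$ are themselves random (they depend on the realized $Q^i$), so one cannot literally invoke a fixed-length Azuma inequality; the clean way is to argue that conditionally on $\mathcal{F}_{i-1}$ the partial sum process is a martingale whatever the realized $m_i$ and $d^i$ are, and that the per-step conditional variance proxy $c_i^{-1}\varepsilon^2/\alpha$ is a deterministic bound (because the $m_i \ge \alpha c_i \sp{d^i}^2/\varepsilon^2$ lower bound exactly cancels the $\sp{d^i}^2$ in the variance of a single averaged increment). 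Handling the $\max\{\cdot,1\}$ truncation in $m_k$ is harmless: if $\sp{d^i}$ is so small that $m_i = 1$, then $|D^i(s,a) - (\cP d^i)(s,a)| \le \sp{d^i} \le \varepsilon/\sqrt{\alpha c_i}$ anyway since $\alpha c_i \sp{d^i}^2/\varepsilon^2 \le 1$, so the same bound survives. Second, one should phrase the concentration so that the event $\{\|T^k - \T(Q^k)\|_\infty \le \varepsilon \text{ for all } k \le n\}$ is obtained \emph{simultaneously}; here it is cleanest to note that for each fixed $(s,a)$ the full sequence $\{T^k(s,a) - \T(Q^k)(s,a)\}_{k=0}^n$ is a single martingale, apply a maximal Azuma inequality (Doob's inequality gives control of the running maximum with the same tail), and then union-bound over the $|\cS||\cA|$ coordinates — this saves the extra factor $(n+1)$ and even leaves slack, but the stated $\alpha$ with the $(n+1)$ factor certainly suffices.

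The main obstacle I anticipate is making the conditioning argument fully rigorous given that $m_i$, $d^i$, and even the "stopping" structure are adapted rather than predetermined: one must set up the filtration $\mathcal{F}_i$ generated by all samples drawn through stage $i$, verify that $Q^i, h^i, d^i, m_i$ are $\mathcal{F}_{i-1}$-measurable while $D^i$ is $\mathcal{F}_i$-measurable, and then apply the appropriate form of the Azuma–Hoeffding inequality for martingales with \emph{conditionally bounded} increments (the bound $|D^i(s,a) - (\cP d^i)(s,a)| \le \sp{d^i}/\sqrt{m_i} \le \varepsilon/\sqrt{\alpha c_i}$ being a deterministic bound on a conditionally-centered increment). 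Once that bookkeeping is in place, the summation $\sum_{i\ge 0} c_i^{-1} \le 1/2$ plugs directly into the exponent and the result follows. I would relegate the precise statement of the martingale concentration lemma — an "Azuma–Hoeffding-like inequality" as the authors call it — to a preliminary lemma in the appendix and then the proof of \cref{prop:sam-err0} becomes the two-line decomposition plus union bound described above.
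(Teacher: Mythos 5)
Your proposal is correct and follows essentially the same route as the paper: the identical martingale decomposition $T^k-\T(Q^k)=\sum_{i=0}^k(D^i-\cP d^i)$, the tower-property/Chernoff argument showing each increment is conditionally sub-Gaussian with parameter $\sp{d^i}^2/m_i\le \varepsilon^2/(\alpha c_i)$ thanks to the choice of $m_i$, and the same union bounds over $(s,a)$ and $k$ with $\alpha=\ln(2|\cS||\cA|(n+1)/\delta)$. One small caution: your parenthetical calling $|D^i(s,a)-(\cP d^i)(s,a)|\le \sp{d^i}/\sqrt{m_i}$ a \emph{deterministic} bound is not literally true (the almost-sure bound is only $\sp{d^i}$); the $1/\sqrt{m_i}$ gain lives in the conditional moment generating function via Hoeffding's lemma, exactly as your earlier ``variance proxy'' phrasing and the paper's explicit MGF induction make precise.
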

The proof is an adaptation of the arguments leading to the Azuma-Hoeffding inequality \citep{azuma1967weighted}, by considering the specific choice of number of samples $m_k$ in the recursive sampling, combined with some ad-hoc union bounds.

\noindent{\sc Remark.} A direct consequence of \cref{prop:sam-err0} is that, 
with probability at least $1-\delta$, the true Bellman residual error
$\sp{Q^k-\T(Q^k)}$ and the empirical residual $\sp{Q^k-T^k}$ differ at most by $2\varepsilon$, as results from a triangular inequality for $\sp{\cdot}$ and the general estimate $\sp{\,\cdot\,}\leq 2\|\cdot\|_\infty$.

In what follows we consider the  specific sequences 
\begin{equation*}
\mbox{\sc (S)}~~~\left\{\begin{array}{l}
c_k=5(k\!+\!2)\ln^2(k\!+\!2)\\
\beta_k=k/(k\!+\!2) 
\end{array}\right. 
\end{equation*}
which satisfy $2\sum_{k=0}^{\infty}c_k^{-1}\leq 1$
and $\beta_k$ increasing to 1. These $\beta_k$'s are the same as in \citet{sabach2017first}, and provide explicit guarantees for the reduction of the Bellman residual error. The choice for $c_k$ has been carefully tailored to achieve a small sample complexity.

The following result presents our complexity bound for \mbox{\rm SAVIA} for weakly communicating MDPs satisfying \mbox{\sc (H)}.
\begin{theorem}\label{thm:err}
Assume \mbox{\sc (H)} and \mbox{\sc (S)} and let $(Q^n,T^n,\pi^n)$ be the output computed by $\mbox{\rm SAVIA}(Q^0,n,\varepsilon,\delta)$. Then, with probability at least $(1-\delta)$ we have, for all $s \in \cS$
\vspace{-1ex}
$$ g^*-g_{\pi^n}(s)\! \leq \sp{Q^n-\T(Q^n)}\leq \frac{8\sp{Q^0-Q^*}}{n+2}+4\varepsilon,$$

\vspace{-2ex}
with a sample and time complexity of order $${O}\big(L\,|\cS||\cA|\big(({\sp{Q^0-Q^*}^{2}+\sp{Q^0}^{2}})/{\varepsilon^{2}}+n^2\big)\big)$$ 
where  
$L=\ln(|\cS||\cA|(n+1)/\delta)\ln^3(n+2)$.
\end{theorem}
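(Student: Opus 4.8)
The plan is to run \mbox{\rm SAVIA} as an \emph{inexact} anchored value iteration on the high-probability event $\cG$ of \cref{prop:sam-err0}, which applies because \mbox{\sc (S)} gives $2\sum_k c_k^{-1}\le 1$, so that $\prob(\cG)\ge 1-\delta$. On $\cG$ set $S^k:=T^k-\T(Q^k)$, so $\|S^k\|_\infty\le\varepsilon$ for every $k\le n$; by the telescoping identity \eqref{Eq:telescope} this $S^k$ is exactly the \emph{accumulated} sampling error, and the update rule reads $Q^{k+1}=(1-\beta_{k+1})Q^0+\beta_{k+1}\widehat\T_k(Q^k)$ with $\widehat\T_k(Q):=\T(Q)+S^k$. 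Each $\widehat\T_k$ is nonexpansive for $\sp{\cdot}$ (a translate of $\T$, and $\maxA,\cP$ are $\sp{\cdot}$-nonexpansive), and since $\T(Q^*)=Q^*+g^*$ with $g^*$ a scalar, $\sp{\widehat\T_k(Q^*)-Q^*}=\sp{S^k}\le 2\varepsilon$: thus \mbox{\rm SAVIA} is a Halpern iteration with $\beta_k=k/(k+2)$ for a family of span-nonexpansive maps sharing the common $2\varepsilon$-approximate fixed point $Q^*$.

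Next I would prove, by induction on $k$, two facts valid on $\cG$. (i) A drift bound $\sp{Q^k-Q^*}\le\sp{Q^0-Q^*}+2\varepsilon k$, which follows at once from $Q^{k+1}-Q^*=(1-\beta_{k+1})(Q^0-Q^*)+\beta_{k+1}(\widehat\T_k(Q^k)-Q^*)$ together with $\sp{\widehat\T_k(Q^k)-Q^*}\le\sp{Q^k-Q^*}+2\varepsilon$. (ii) The residual estimate $\sp{T^k-Q^k}\le\frac{8}{k+2}\sp{Q^0-Q^*}+2\varepsilon$, obtained by re-running the Sabach--Shtern analysis of Halpern's iteration \citep[Lemma~5]{sabach2017first} while carrying the perturbations $S^k$. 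Concretely, from the identity $T^k-Q^k=(1-\beta_k)(T^{k-1}-Q^0)+D^k$ and $\sp{T^{k-1}-Q^0}\le 2\sp{Q^0-Q^*}+2\varepsilon k$ (by (i)), the anchoring weight $1-\beta_k=2/(k+2)$ multiplies the at-most-linearly-growing term and contributes only $O(\varepsilon)$; a sharper bookkeeping, following Sabach--Shtern, recovers the stated constants. Once (ii) is established, $\sp{\T(Q^n)-Q^n}\le\sp{T^n-Q^n}+\sp{S^n}\le\frac{8}{n+2}\sp{Q^0-Q^*}+4\varepsilon$, and \cref{prop:err} applied to $Q=Q^n$, $\pi=\pi^n$ gives $g^*-g_{\pi^n}(s)\le\sp{\T(Q^n)-Q^n}$; together these yield the two displayed inequalities.

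For the complexity I would bound $\sum_{k=0}^n|\cS||\cA|\,m_k$ through a decay estimate for $\sp{d^k}$. Since $\maxA$ is $\sp{\cdot}$-nonexpansive, $\sp{d^k}\le\sp{Q^k-Q^{k-1}}=:e_k$; from $Q^{k+1}-Q^k=(\beta_{k+1}-\beta_k)(T^{k-1}-Q^0)+\beta_{k+1}D^k$ with $\beta_{k+1}-\beta_k=\frac{2}{(k+2)(k+3)}$, $\sp{D^k}\le\sp{d^k}$ and the drift bound, one gets a recursion $e_{k+1}\le\beta_{k+1}e_k+\frac{4\sp{Q^0-Q^*}}{(k+2)(k+3)}+\frac{4\varepsilon}{k+3}$, whose unrolling (the products $\prod_j\beta_j$ telescope to $(k+2)(k+3)/((n+1)(n+2))$) gives $\sp{d^k}\le e_k=O\!\big(\sp{Q^0-Q^*}/k+\varepsilon\big)$, while $\sp{d^0}=\sp{\maxA(Q^0)}\le\sp{Q^0}$. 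Plugging into $m_k\le\alpha c_k\sp{d^k}^2/\varepsilon^2+2$ with $c_k=5(k+2)\ln^2(k+2)$ and summing: $k=0$ contributes $O(\alpha\sp{Q^0}^2/\varepsilon^2)$; the part $\sp{Q^0-Q^*}^2/k^2$ contributes $O(\alpha\sp{Q^0-Q^*}^2/\varepsilon^2)\sum_k c_k/k^2=O(\alpha\log^3(n)\sp{Q^0-Q^*}^2/\varepsilon^2)$; and the residual $\varepsilon^2$-part contributes $O(\alpha)\sum_k c_k=O(\alpha n^2\log^2 n)$, which is the source of the $n^2$. Multiplying by $|\cS||\cA|$ and absorbing $\alpha\log^3(n)$ into $L$ gives the claimed order; under the $O(1)$-per-sample assumption the time complexity is the same.

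The main obstacle is estimate (ii). A naive comparison of $Q^k$ with the exact Anc-VI trajectory loses a factor proportional to $n$ in front of $\varepsilon$ (adversarial perturbations of a Halpern iteration can genuinely blow up the residual), so one cannot route the argument through "closeness to the exact iterate"; one must instead work directly on the residual and exploit that recursive sampling packages the entire sampling history into the single uniformly bounded term $S^k=T^k-\T(Q^k)$, which---thanks to the $(1-\beta_k)$ anchoring applied to a merely linearly growing quantity---enters the residual bound only as an additive $O(\varepsilon)$ rather than accumulating across iterations.
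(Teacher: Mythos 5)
Your proposal is correct and follows essentially the same route as the paper: conditioning on the good event of \cref{prop:sam-err0}, a linear drift bound on $\sp{Q^k-Q^*}$ (the paper's \cref{Le:uno}), an $O(\sp{Q^0-Q^*}/k+\varepsilon)$ recursion for $\sp{Q^k-Q^{k-1}}$ via the update identity (the paper's \cref{Le:dos}), the residual decomposition, and the same term-by-term summation of the $m_k$. The only caveat is that your stated constants (e.g.\ the $2\varepsilon k$ drift and the $+2\varepsilon$ in estimate (ii)) are looser than, or deferred to, the weighted inductions the paper actually carries out, but this affects only absolute constants, not the order of the bound.
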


The proof exploits \cref{prop:sam-err0} and the remark above in order to  establish an upper bound of the Bellman residual error. The bound for the optimality gap in the policy error then follows from  \cref{prop:err}. The recursive sampling technique is crucial here to attain a complexity that scales quadratically in $\varepsilon$,
whereas a naive sampling would give a much worse order complexity. 
Note that $L$ contains only logarithmic factors and is bounded away from 0 so that in our complexity estimates it can absorb any constant factors.

\subsection{A model-free algorithm without prior knowledge}\label{Se:complexity2}
Theorem \ref{thm:err} shows that \mbox{\rm SAVIA}  is  effective in reducing the Bellman residual error as well as the policy error. Namely,
with probability at least $1-\delta$, after $n\geq \sp{Q^0-Q^*}/\varepsilon$ iterations  it produces a $(12\,\varepsilon)$-optimal policy  $\pi^n$  and with $\widetilde{O}\left({|\cS||\cA|(\sp{Q^0-Q^*}^{2}}/{\varepsilon^{2}}+1)\right)$   complexity. 

Unfortunately, since $Q^*$ is unknown we cannot directly  determine the number of iterations $n$ required
to achieve this goal. To bypass this issue, we modify the basic iteration by running {\rm SAVIA} for an increasing sequence of $n$'s using a standard doubling trick \citep{ auer1995gambling, besson2018doubling}, and incorporating an explicit stopping rule
based on the empirical Bellman residual error $\sp{Q^{n_i}-T^{n_i}}$. Specifically, we consider the \mbox{\rm SAVIA\textbf{+}} iteration described in \cref{alg:example_sample}.
\begin{algorithm}[ht!]
  \caption{\mbox{\sc \mbox{\rm SAVIA\textbf{+}}}$(Q^0,\varepsilon,\delta)$}
  \label{alg:example_sample}
\begin{algorithmic}
  \STATE {\bfseries Input:} $Q^{0}\!\in\real^{\cS\times\cA}$\,;\,$\varepsilon>0$\,;\,$\delta\in(0,1)$
  \STATE {\bfseries for} $i=0,1,\dots$ \,\,\textbf{do} 
    \STATE \quad Set $n_i= 2^i,  \delta_i = \delta/c_i $. 
    \STATE \quad  $(Q^{n_i},T^{n_i}, \pi^{n_i} ) = \mbox{\sc \mbox{\rm SAVIA}}(Q^0,n_i,\varepsilon,\delta_i)$
     \STATE {\bfseries until} $\sp{Q^{n_i}-T^{n_i}}\leq 14\,\varepsilon$
\STATE {\bfseries Output:} $Q^{n_i},T^{n_i}, \pi^{n_i}$
\end{algorithmic}
\end{algorithm}

Our next results establish the effectiveness of the stopping rule and determine the sample complexity of  the modified algorithm \mbox{\rm SAVIA\textbf{+}}. In order to simplify the notation, and to avoid equation display issues, we express our estimates in terms of the following quantities
\vspace{-1ex}
\begin{align*}
    \mu&=\sp{Q^0-Q^*},\\
    \nu&=\sp{Q^0-Q^*}+\sp{Q^0},\\
    \kappa&=\max\{\sp{\R},\sp{Q^0}\}.
\end{align*}

\vspace{-2ex}
Define the stopping time of \mbox{\rm SAVIA\textbf{+}} as the random variable
$$
 \st =\inf\{{ n_i \in \mathbb N \,:\, \sp{Q^{n_i}-T^{n_i}} \le 14\varepsilon}\}
$$
where $Q^{n_i},T^{n_i}$'s are the iterates generated 
in each loop. We first show that $N$ has finite expectation and, as a consequence, that the algorithm stops in finite time. 
\begin{proposition} \label{Prop:3.3}
Assume \mbox{\sc (H)} and \mbox{\sc (S)}. Then, $\expec [N] \le {2(1+\mu/\varepsilon)}/{(1-\delta)}$. In particular $N$ is finite almost surely and \mbox{\rm SAVIA\textbf{+}}$(Q^0,\varepsilon,\delta)$ stops with probability $1$ after finitely many loops.  
\end{proposition}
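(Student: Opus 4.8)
The plan is to locate the first doubling scale $n_{i^{\star}}=2^{i^{\star}}$ at which \emph{one} successful execution of \mbox{\rm SAVIA} already forces the stopping test to fire, and then to control the number of additional doublings by a geometric tail estimate that exploits the independence of the successive loops of \mbox{\rm SAVIA\textbf{+}}.

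First I would fix a loop index $i$ and let $E_i$ be the favourable event of \cref{prop:sam-err0} for the call $\mbox{\rm SAVIA}(Q^0,n_i,\varepsilon,\delta_i)$, so that $\prob[E_i^{c}]\le\delta_i$. On $E_i$, \cref{thm:err} gives $\sp{\T(Q^{n_i})-Q^{n_i}}\le 8\mu/(n_i+2)+4\varepsilon$, and the Remark following \cref{prop:sam-err0} gives $\bigl|\sp{T^{n_i}-Q^{n_i}}-\sp{\T(Q^{n_i})-Q^{n_i}}\bigr|\le 2\varepsilon$; combining these, on $E_i$ the empirical residual obeys $\sp{T^{n_i}-Q^{n_i}}\le 8\mu/(n_i+2)+6\varepsilon$. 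Since $n_i=2^i$, the right-hand side is $\le 14\varepsilon$ as soon as $2^i\ge \mu/\varepsilon-2$; let $i^{\star}$ be the least such index. A short case distinction ($\mu/\varepsilon\le3$ forces $i^{\star}=0$, while $\mu/\varepsilon>3$ gives $2^{i^{\star}-1}<\mu/\varepsilon-2$) yields $2^{i^{\star}}\le 2(1+\mu/\varepsilon)$, which will be the main term of the bound.

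Next I would use that distinct loops of \mbox{\rm SAVIA\textbf{+}} draw fresh i.i.d.\ samples and all restart from the same $Q^0$, so the events $\{E_i\}_i$ are mutually independent and, moreover, $E_i$ is independent of the $\sigma$-algebra generated by loops $0,\dots,i-1$ — in particular of the event $\{N>n_{i-1}\}$ that the algorithm is still running when loop $i$ begins. By the previous paragraph, for $i\ge i^{\star}$ the occurrence of $E_i$ forces the stopping test to pass at loop $i$; hence $\{N>n_i\}$ entails that $E_j$ fails for every $j$ with $i^{\star}\le j\le i$, and independence gives $\prob[N>n_i]\le\prod_{j=i^{\star}}^{i}\delta_j\le(\delta/2)^{\,i-i^{\star}+1}$, the last step using $\delta_j=\delta/c_j\le\delta/2$ (which follows from $2\sum_k c_k^{-1}\le1$, forcing $c_k\ge2$). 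Plugging this into the dyadic regrouping $\expec[N]=1+\sum_{m\ge0}2^m\,\prob[N>n_m]$ (valid since $N$ is $\{2^m\}$-valued), bounding the first $i^{\star}$ terms by $1$ and the rest by $(\delta/2)^{m-i^{\star}+1}$, and summing the resulting geometric series, one obtains $\expec[N]\le 2^{i^{\star}}\bigl(1+\tfrac{\delta}{2(1-\delta)}\bigr)\le\tfrac{2^{i^{\star}}}{1-\delta}\le\tfrac{2(1+\mu/\varepsilon)}{1-\delta}$. Finiteness of $\expec[N]$ then gives $N<\infty$ almost surely, i.e.\ termination after finitely many loops with probability one.

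The step I expect to be the main obstacle is precisely this probabilistic bookkeeping: one must argue carefully that ``still running at loop $i$'' is measurable with respect to loops $0,\dots,i-1$ and therefore independent of the success event $E_i$, so that the per-loop failure probabilities genuinely multiply — merely invoking $\prob[E_i^{c}]\le\delta_i$ is not enough, because $\sum_m 2^m\delta_m$ diverges and the tail would blow up. Two smaller points also need care to hit the stated constant: the geometric ratio of the tail must be $\delta$ rather than $2\delta$, which is exactly what $c_j\ge2$ buys, and the elementary estimate $2^{i^{\star}}\le 2(1+\mu/\varepsilon)$ must have its small-$\mu/\varepsilon$ regime treated on its own.
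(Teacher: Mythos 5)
Your proposal is correct and follows essentially the same route as the paper: identify the first doubling scale (your $i^{\star}$, the paper's $i_0$, both bounded by $n\le 2(1+\mu/\varepsilon)$) at which the good sampling event of \cref{prop:sam-err0} combined with \cref{thm:err} forces the stopping test, exploit the independence of the loops (each restarting from $Q^0$ with fresh samples) to get a geometrically decaying tail with ratio $\delta_j\le\delta/2$, and sum. The only cosmetic difference is that you bound $\expec[N]$ via the survival-function identity $\expec[N]=1+\sum_m 2^m\prob(N>2^m)$ whereas the paper sums $n_i\,\prob(I=i)$ directly; both yield the same constant $2(1+\mu/\varepsilon)/(1-\delta)$.
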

The proof exploits the fact that restarting \mbox{\rm SAVIA} from $Q^0$ in every cycle guarantees the independence of the stopping events in each loop. 
This independence is also relevant to establish the following sample complexity of \mbox{\rm SAVIA\textbf{+}}, which is one of the main results of this paper.

\begin{theorem}\label{thm:err-D}
Assume \mbox{\sc (H)} and \mbox{\sc (S)}. Let $(Q^{N},T^{N},\pi^{N})$ be the output of $\mbox{\sc \mbox{\rm SAVIA\textbf{+}}}(Q^0,\varepsilon,\delta)$. Then, with probability at least $(1-\delta)$ we have, for all $s\in\cS$ 
\vspace{-1ex}
\begin{equation*} g^*-g_{\pi^N}(s)\! \leq \sp{Q^N-\T(Q^N)}\leq 16\,\varepsilon,
\end{equation*}
 with sample and time complexity ${O}\big(\widehat{L}\,|\cS||\cA|(
\nu^2/\varepsilon^{2}\!+\!1)\big)$ where $\widehat{L}=\ln\!\big(|\cS||\cA|\,(1\!+\!\mu/\varepsilon)/\delta\big)\,\ln^4(1\!+\!\mu/\varepsilon)$. 
\end{theorem}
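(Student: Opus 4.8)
The plan is to run one global union bound over the (infinitely many potential) loops of \mbox{\rm SAVIA\textbf{+}}, argue everything deterministically on the resulting good event, and then add up the per-loop costs supplied by \cref{thm:err}, keeping careful track of the logarithmic factors that turn the $\log^3$ of \cref{thm:err} into the $\log^4$ in $\widehat L$.

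First I would fix any fixed point $Q^*$ of Bellman's equation and define, for each $i\ge 0$, the event $G_i$ from \cref{prop:sam-err0} applied to the $i$-th loop, i.e.\ the run \mbox{\rm SAVIA}$(Q^0,n_i,\varepsilon,\delta_i)$ with $n_i=2^i$, $\delta_i=\delta/c_i$, on which $\|T^k-\T(Q^k)\|_\infty\le\varepsilon$ for every $k\le n_i$ of that loop. Set $G=\bigcap_{i\ge0}G_i$. Since $\sum_{i\ge0}\delta_i=\delta\sum_{i\ge0}c_i^{-1}\le\delta$, subadditivity gives $\prob(G)\ge1-\delta$, and this is the event on which all claimed bounds will hold. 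On $G$ the entire conclusion of \cref{thm:err} is valid inside every loop, because its proof only invokes $G_i$; moreover, by the Remark following \cref{prop:sam-err0} together with $\sp{\,\cdot\,}\le2\|\cdot\|_\infty$, within every loop the empirical residual $\sp{T^{n_i}-Q^{n_i}}$ and the true residual $\sp{\T(Q^{n_i})-Q^{n_i}}$ differ by at most $2\varepsilon$.

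Next, correctness and a pathwise bound on $N$, both on $G$. When the stopping rule triggers at the loop whose horizon is $N$ we have $\sp{T^N-Q^N}\le14\varepsilon$, hence $\sp{\T(Q^N)-Q^N}\le\sp{\T(Q^N)-T^N}+\sp{T^N-Q^N}\le2\varepsilon+14\varepsilon=16\varepsilon$, and applying \cref{prop:err} to $\pi^N$ (which satisfies $\pi^N(s)\in\Argmax_{a}Q^N(s,a)$) yields $0\le g^*-g_{\pi^N}(s)\le\sp{\T(Q^N)-Q^N}\le16\varepsilon$, the displayed inequality. To bound $N$, combine the residual estimate of \cref{thm:err} with the Remark: on $G$, $\sp{T^{n_i}-Q^{n_i}}\le\frac{8\mu}{n_i+2}+4\varepsilon+2\varepsilon$, which is $\le14\varepsilon$ as soon as $n_i+2\ge\mu/\varepsilon$. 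Since $n_i=2^i$ doubles, the rule fires no later than the first $i$ with $2^i\ge\mu/\varepsilon$, so on $G$ we obtain the deterministic bound $N\le2(1+\mu/\varepsilon)$ and the number of completed loops is $O(\log(2(1+\mu/\varepsilon)))$.

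Finally I would sum the sample counts. On $G$, the total number of samples is $\sum_{i:\,n_i\le N}(\text{cost of loop }i)$, and by \cref{thm:err} the cost of loop $i$ is $O\!\big(L_i\,|\cS||\cA|\,((\sp{Q^0-Q^*}^2+\sp{Q^0}^2)/\varepsilon^2+n_i^2)\big)$ with $L_i=\ln(2|\cS||\cA|(n_i+1)/\delta_i)\log^3(n_i+2)$. I would bound $\sp{Q^0-Q^*}^2+\sp{Q^0}^2\le\nu^2$, use $\sum_{2^i\le N}4^i\le2N^2=O(1+\nu^2/\varepsilon^2)$ (since $N=O(1+\mu/\varepsilon)$ and $\mu\le\nu$), and dominate every $L_i$ by $L_N$; because $n_i\le N=O(1+\mu/\varepsilon)$ and $1/\delta_i=c_i/\delta$ with $c_i$ only polylogarithmic in $1+\mu/\varepsilon$ along the run, one gets $L_N=O\!\big(\ln(|\cS||\cA|(1+\mu/\varepsilon)/\delta)\,\log^3(2(1+\mu/\varepsilon))\big)$. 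Multiplying $L_N$ by the $O(\log(2(1+\mu/\varepsilon)))$ loops produces exactly the $\log^4$ in $\widehat L$, and collecting the $\nu^2/\varepsilon^2$ and $n_i^2$ contributions gives the stated $O(\widehat L\,|\cS||\cA|(\nu^2/\varepsilon^2+1))$. The main obstacle is precisely this logarithmic bookkeeping — verifying that $n_i$ and $c_i$ (hence $1/\delta_i$) stay polylogarithmic in $1+\mu/\varepsilon$ throughout the run, so that all the $L_i$ are of the same order and their sum costs only one extra $\log$ factor; everything else is an immediate consequence of \cref{thm:err}, \cref{prop:err}, the Remark after \cref{prop:sam-err0}, and the union bound.
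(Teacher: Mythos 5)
Your proposal is correct and follows essentially the same route as the paper: intersect the per-loop good events $G_i$ from \cref{prop:sam-err0} with a union bound using $\sum_i\delta_i\le\delta/2$, observe that on this event the stopping rule must fire by the first $n_i\gtrsim\mu/\varepsilon$ (so $N\le 2(1+\mu/\varepsilon)$), bound the residual by $2\varepsilon+14\varepsilon$, and sum the per-loop costs from \cref{thm:err}, with the $O(\log(2(1+\mu/\varepsilon)))$ completed loops supplying the extra logarithm that turns $\log^3$ into $\log^4$. The only (cosmetic) difference is that you note directly that the good event is contained in $\{I\le i_0\}$, whereas the paper bounds the events $A=\{I\le i_0\}$ and $B=\bigcap_iG_i$ separately before intersecting them.
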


In the proof, we consider some `good' events $G_i$ (see Appendix) which guarantee a low sample complexity for the $i$-th loop and then, through simple union bounds, we show that the probability of these good events occurring at every iteration is at least $1-\delta$. Thus \cref{thm:err-D} shows that \mbox{\rm SAVIA\textbf{+}} computes an $\varepsilon$-optimal policy without requiring any prior estimates for the number of iteration to run, thanks to the doubling trick and the stopping rule. 

In order to compare our complexity result with the lower bound  and the previous results in \cref{Tabla1}, which concern the case $\sp{h^{*}}\ge 1$, we state the following Corollary under this assumption. 
Note however that \cref{thm:err-D} holds without any additional restriction.

\begin{corollary}\label{cor:err}
Assume \mbox{\sc (H)}, \mbox{\sc (S)}, $r(s,a) \in[0,1]$ for all $(s,a)\in\cS\times\cA$, and $\sp{h^{*}}\ge 1$. Let $(Q^N,T^N,\pi^N)$ be the output of $\mbox{\rm SAVIA\textbf{+}}(Q^0,\varepsilon/16,\delta)$
with $Q^0=0$ and $\varepsilon\leq 1$. Then, with probability at least $(1-\delta)$ we have,  for all $s\in\cS$
\vspace{-1ex}
  $$g^*-g_{\pi^N}(s)\leq \sp{Q^N-\T(Q^N)} \leq \varepsilon,$$
 
  \vspace{-2ex}
   with sample and time complexity ${O}\big(\widetilde{L}\,|\cS||\cA|\sp{h^*}^{2}/\varepsilon^{2}\big)$
   where $\widetilde{L}= \ln \left(|\cS||\cA|\sp{h^*}/(\varepsilon \delta)\right)\ln^4 (\sp{h^*}/\varepsilon)$.
\end{corollary}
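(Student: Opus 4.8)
The plan is to deduce the statement from \cref{thm:err-D} by a change of variables in the accuracy parameter together with elementary estimates relating $\sp{Q^*}$ to $\sp{h^*}$. First I would apply \cref{thm:err-D} to the run $\mbox{\rm SAVIA\textbf{+}}(Q^0,\varepsilon/16,\delta)$, i.e.\ with the algorithm's accuracy parameter set to $\varepsilon/16$ (which is admissible since $\varepsilon>0$). Under \mbox{\sc (H)} and \mbox{\sc (S)} the conclusion gives, with probability at least $1-\delta$, that $g^*-g_{\pi^N}(s)\le\sp{\T(Q^N)-Q^N}\le 16\,(\varepsilon/16)=\varepsilon$ for every $s\in\cS$, which is exactly the claimed error bound (the first inequality being \cref{prop:err} applied to $Q^N$ and $\pi^N$).

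The key step is to control the quantities $\mu=\sp{Q^0-Q^*}$ and $\nu=\sp{Q^0-Q^*}+\sp{Q^0}$ appearing in the complexity estimate of \cref{thm:err-D} when $Q^0=0$. Then $\mu=\nu=\sp{Q^*}$, and I would bound $\sp{Q^*}$ using the Bellman equation $Q^*=\R+\cP h^*-g^*$ with $h^*=\maxA(Q^*)$: since subtracting the constant $g^*$ does not change a span seminorm, $\sp{Q^*}=\sp{\R+\cP h^*}\le\sp{\R}+\sp{\cP h^*}$. Here $\sp{\R}\le 1$ because $\R(s,a)\in[0,1]$, and $\sp{\cP h^*}\le\sp{h^*}$ because each entry $(\cP h^*)(s,a)=\sum_{s'}\cP(s'|s,a)h^*(s')$ is a convex combination of the values of $h^*$, hence lies between $\min_{s'}h^*(s')$ and $\max_{s'}h^*(s')$. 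Using the hypothesis $\sp{h^*}\ge 1$ we obtain $\mu=\nu=\sp{Q^*}\le 1+\sp{h^*}\le 2\sp{h^*}$.

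It then remains to substitute these bounds into the complexity ${O}\big(\widehat{L}\,|\cS||\cA|(\nu^2/(\varepsilon/16)^{2}+1)\big)$ of \cref{thm:err-D}. For the polynomial factor, $\nu^2/(\varepsilon/16)^2+1\le 1024\,\sp{h^*}^2/\varepsilon^2+1={O}(\sp{h^*}^2/\varepsilon^2)$, the $+1$ being absorbed since $\sp{h^*}\ge 1\ge\varepsilon$. For $\widehat{L}=\ln\!\big(4|\cS||\cA|(1+\mu/(\varepsilon/16))/\delta\big)\log^4(2(1+\mu/(\varepsilon/16)))$, note $\mu/(\varepsilon/16)=16\mu/\varepsilon\le 32\,\sp{h^*}/\varepsilon$, and $\sp{h^*}/\varepsilon\ge 1$ gives $1+32\,\sp{h^*}/\varepsilon\le 33\,\sp{h^*}/\varepsilon$; hence both logarithmic factors reduce, up to absolute constants, to $\ln(2|\cS||\cA|\sp{h^*}/(\varepsilon\delta))$ and $\ln^4(\sp{h^*}/\varepsilon)$ respectively, using that $\ln(2|\cS||\cA|\sp{h^*}/(\varepsilon\delta))$ is bounded away from $0$ to absorb the additive constants inside the first logarithm. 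Multiplying the two bounds yields $\widehat{L}={O}(\widetilde{L})$ and the stated complexity ${O}\big(\widetilde{L}\,|\cS||\cA|\sp{h^*}^2/\varepsilon^2\big)$.

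I do not expect a genuine obstacle here: the argument is a routine instantiation of \cref{thm:err-D}. The only point requiring a little care is matching the precise shape of the polylogarithmic factor $\widetilde{L}$, i.e.\ checking that the additive shifts and constant multipliers appearing inside the logarithms of $\widehat{L}$ are harmless; this is exactly where the assumption $\sp{h^*}\ge 1$ (together with $\varepsilon\le 1$) is used, since it keeps the arguments of all logarithms bounded below and lets the $O(\cdot)$ absorb the constants.
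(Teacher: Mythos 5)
Your proposal is correct and follows essentially the same route as the paper: bound $\mu=\nu=\sp{Q^*}\le\sp{\R}+\sp{h^*}\le 2\sp{h^*}$ via the Bellman equation and span-invariance under constants, then instantiate \cref{thm:err-D} at accuracy $\varepsilon/16$ and absorb the constants into $\widetilde{L}$ using $\sp{h^*}\ge 1$ and $\varepsilon\le 1$. Your write-up is in fact more explicit than the paper's two-line argument about how the additive shifts inside the logarithms are absorbed.
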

The proof is a straightforward application of  \cref{thm:err-D} considering that $\sp{Q^*}\leq\sp{\R}+\sp{h^*}$. 
To the best of our knowledge, the sample complexity in \cref{cor:err} is state-of-the-art among model-free algorithms, and matches the complexity lower bound up to a factor $\sp{h^*}$. Furthermore, \mbox{\rm SAVIA\textbf{+}} does not require any prior knowledge of $\sp{h^*}$ and automatically provides an $\varepsilon$-optimal policy, unlike most prior works.

Our previous results provide PAC bounds (probably approximately correct)  which guarantee a small error  with high probability. On the other hand, it is also relevant to  estimate the expected sample complexity considering the full probability space, including the low probability events in which the algorithm may take a significantly longer time to converge. Such expected complexity results have been studied mainly in the multi-armed bandit literature \citep{katz2020true, mason2020finding, jourdan2023varepsilon}, but much less in the reinforcement learning literature.
The following result establishes the expected sample complexity of \mbox{\rm SAVIA\textbf{+}} in terms of expected policy error.

\begin{theorem}\label{thm:E-err-D}
Assume \mbox{\sc (H)} and \mbox{\sc (S)}. Let $(Q^N,T^N,\pi^N)$ be the output of $\mbox{\rm SAVIA\textbf{+}}(Q^0,\varepsilon,\delta)$. Then, for all $s\in\cS$
$$\expec[g^*-g_{\pi^N}(s)] \leq 16\,\varepsilon +\delta \sp{r},$$
    with expected sample and time complexity
 $$\widetilde{O}\big(|\cS||\cA|\mbox{$(\nu^{2}/{\varepsilon^{2}}+1 +
    \delta\, (1+\mu/\varepsilon)^2(1+(\kappa/\varepsilon)^2)$}\big).$$
\end{theorem}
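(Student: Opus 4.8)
The statement splits into the bound on $\expec[g^*-g_{\pi^N}(s)]$ and the bound on the expected complexity; throughout, $N<\infty$ almost surely by \cref{Prop:3.3}, so everything is well defined. For the policy error the plan is to condition on the event $G$ of \cref{thm:err-D} (with $\prob(G)\ge 1-\delta$), on which $g^*-g_{\pi^N}(s)\le 16\varepsilon$, and on $G^c$ to use only the a priori inequality $0\le g^*-g_{\pi^N}(s)\le\sp{\R}$, valid because both $g^*$ and $g_{\pi^N}(s)$ are limits of Ces\`aro averages of rewards and hence lie in $[\min_{s,a}\R(s,a),\max_{s,a}\R(s,a)]$. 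Then $\expec[g^*-g_{\pi^N}(s)]\le 16\varepsilon\,\prob(G)+\sp{\R}\,\prob(G^c)\le 16\varepsilon+\delta\sp{\R}$.

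For the complexity, write $\mathcal C=\sum_{i\ge 0}\mathbf 1\{N\ge n_i\}\,X_i$ with $X_i$ the number of samples drawn in the $i$-th loop, which runs \mbox{\rm SAVIA}$(Q^0,n_i,\varepsilon,\delta_i)$ with $n_i=2^i$, $\delta_i=\delta/c_i$. The structural point is that restarting from $Q^0$ with fresh samples in every loop makes $\{N\ge n_i\}$---the event that loops $0,\dots,i-1$ all failed the stopping test---independent of the randomness used inside loop $i$; hence $\expec[\mathcal C]=\sum_{i\ge 0}\prob(N\ge n_i)\,\expec[X_i]$, and I would bound the two factors separately. For the tail: \cref{thm:err} applied to loop $i$, together with the Remark following \cref{prop:sam-err0}, shows that with probability $\ge 1-\delta_i$ one has $\sp{T^{n_i}-Q^{n_i}}\le 8\mu/(n_i+2)+6\varepsilon$, which is $\le 14\varepsilon$ as soon as $n_i\ge\mu/\varepsilon$; so $\prob(\text{loop }i\text{ does not stop})\le\delta_i$ whenever $n_i\ge\mu/\varepsilon$, and by independence $\prob(N\ge n_i)\le\prod_{j:\,\mu/\varepsilon\le n_j<n_i}\delta_j$, which decays faster than geometrically once $n_i>\mu/\varepsilon$ since $\delta_j=\delta/c_j\to 0$. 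For the per-loop cost: on the good event of \cref{prop:sam-err0} for loop $i$ (probability $\ge 1-\delta_i$), \cref{thm:err} gives $X_i\le O(L_i|\cS||\cA|(\nu^2/\varepsilon^2+n_i^2))$ with $L_i$ logarithmic; off that event I would fall back on a deterministic span-seminorm estimate for the increments---using $\sp{D}\le\sp{d}$, the identity $Q^k-Q^{k-1}=(\beta_k-\beta_{k-1})(T^{k-2}-Q^0)+\beta_kD^{k-1}$, and $\sp{\maxA(Q)-\maxA(Q')}\le\sp{Q-Q'}$---of the form $\sp{d^k}=\widetilde O(\kappa+\varepsilon)$, giving $\sum_{k\le n_i}c_k\sp{d^k}^2=\widetilde O((\kappa+\varepsilon)^2 n_i^2)$ and hence $X_i\le\widetilde O(|\cS||\cA|\,n_i^2(1+(\kappa/\varepsilon)^2))$ unconditionally. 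Combining, $\expec[X_i]\le O(L_i|\cS||\cA|(\nu^2/\varepsilon^2+n_i^2))+\delta_i\,\widetilde O(|\cS||\cA|\,n_i^2(1+(\kappa/\varepsilon)^2))$.

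Plugging these into $\expec[\mathcal C]=\sum_i\prob(N\ge n_i)\expec[X_i]$ and splitting the sum at $i^\star=\lceil\log_2(1+\mu/\varepsilon)\rceil$: the $O(\log(1+\mu/\varepsilon))$ terms with $i\le i^\star$ (where $\prob(N\ge n_i)\le 1$) contribute $\widetilde O(|\cS||\cA|(\nu^2/\varepsilon^2+1))$ from the first summand---using $\mu\le\nu$ and $n_i\le 2(1+\mu/\varepsilon)$---and $\widetilde O(|\cS||\cA|\,\delta\,(1+\mu/\varepsilon)^2(1+(\kappa/\varepsilon)^2))$ from the second (the $n_i^2$ growth being summed geometrically up to $n_{i^\star}^2$). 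For $i>i^\star$ the super-geometric decay of $\prob(N\ge n_i)$ dominates the geometric growth of $\expec[X_i]$, so that tail is of the order of its first term and is already covered. Collecting everything and absorbing the interlocked logarithmic factors ($\alpha_i$, $c_i$, $\delta_i=\delta/c_i$, $L_i$) into $\widetilde O(\cdot)$ yields the stated complexity.

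\textbf{Main obstacle.} The delicate step is the per-loop bound \emph{off} the good event: one must control $X_i$ (equivalently $\sum_{k\le n_i}c_k\sp{d^k}^2$) by only a factor $n_i^2$, using nothing but span-seminorm arithmetic and $\sp{D}\le\sp{d}$, because off the good event one loses the Halpern nonexpansiveness that normally keeps the iterates $Q^k$ in a bounded ball around $Q^*$---so the increments $\sp{d^k}$ must be tamed by a careful direct induction rather than by contraction, and any slack there is amplified by the $c_k$ weights. A secondary nuisance is verifying that the geometric-sum bookkeeping genuinely collapses the many nested logarithms into a single polylog and that the $\delta$-weighted contributions do not contaminate the leading $\nu^2/\varepsilon^2$ term.
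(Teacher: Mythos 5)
Your proposal is correct and follows essentially the same route as the paper: the same good-event/bad-event split with the crude $\sp{\R}$ fallback for the policy error, and for the complexity the same three ingredients --- the high-probability per-loop bound from \cref{thm:err}, a deterministic induction giving $\sp{d^k}\le\kappa$ (the paper's \cref{le:rough}, which your identity for $Q^k-Q^{k-1}$ reproduces), and the factorial decay of $\prob(I=i)$ for $i>i_0$. The only cosmetic difference is that you organize the expectation as $\sum_i\prob(N\ge n_i)\,\expec[X_i]$ via independence, whereas the paper conditions on $A\cap B$, $A\cap B^c$, and $\{I=i\}_{i>i_0}$; both yield the stated bound.
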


In contrast with the proof of \cref{thm:err-D} which only focuses on the occurrence of good events, the analysis in expectation requires to estimate the sample complexity for the `unlucky' events where \cref{thm:err} does not provide a guarantee of low sample complexity. To this end we establish a uniform bound for $\sp{d^k} \le \max\{\sp{\R}^2, \sp{Q^0}^2\}$ and compute the expected sample complexity considering both the lucky and unlucky events by the law of total expectation. 

Observe that, compared to \cref{thm:err-D}, the expected policy error and expected sample complexity include  the additional terms $\delta \sp{r}$ and $\widetilde{O}(\mbox{$|\cS||\cA| \delta\, (1+\mu/\varepsilon)^2(1+(\kappa/\varepsilon)^2) $})$ respectively. These additional terms arise from the the unlucky events, showing a fourth order in $\varepsilon$ while our PAC complexity is quadratic in $\varepsilon$.   

From \cref{thm:E-err-D} we derive following analog of \cref{cor:err} 
for the expected policy error and complexity. 
\begin{corollary}\label{cor:expected}
   Assume \mbox{\sc (H)}, \mbox{\sc (S)},  $r(s,a) \in[0,1]$ for all $(s,a)\in \cS\times \cA$ and $\sp{h^{*}}\ge 1$. Let $(Q^N,T^N,\pi^N)$ be the output of $\mbox{\rm SAVIA$\textbf{+}$}(Q^0,\varepsilon/17,\delta)$ with  $Q^0=0$,  $\varepsilon\leq 1$, and $\delta=\varepsilon^2/17$. Then,  for all $s\in\cS$
  $$\expec[g^*-g_{\pi^N}(s)] \leq \varepsilon,$$
   with
   expected sample complexity  $\widetilde{O}\left({|\cS||\cA|\sp{h^*}^{2}}/{\varepsilon^{2}}\right)$.
\end{corollary}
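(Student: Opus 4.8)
The plan is to obtain the statement directly from \cref{thm:E-err-D}, by substituting the prescribed parameters and controlling the quantities $\mu,\nu,\kappa$ that appear there. First I would evaluate these quantities for the choice $Q^0=0$. Since $\sp{0}=0$ and $\sp{-x}=\sp{x}$, we get $\mu=\nu=\sp{Q^*}$ and $\kappa=\sp{\R}$. Next, exactly as in the proof of \cref{cor:err}, the identity $Q^*=\T_{g^*}(Q^*)=\R+\cP h^*-g^*$ together with the fact that $\cP$ is an averaging operator gives $\sp{Q^*}\le\sp{\R}+\sp{h^*}$; combining this with $r(s,a)\in[0,1]$ (hence $\sp{\R}\le 1$) and the hypothesis $\sp{h^*}\ge 1$ yields $\mu=\nu=\sp{Q^*}\le 1+\sp{h^*}\le 2\sp{h^*}$ and $\kappa=\sp{\R}\le 1\le\sp{h^*}$.

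Then I would apply \cref{thm:E-err-D} with error parameter $\varepsilon/17$ in place of $\varepsilon$ and confidence $\delta=\varepsilon^2/17$. For the policy error, the bound of \cref{thm:E-err-D} reads $\expec[g^*-g_{\pi^N}(s)]\le 16(\varepsilon/17)+\delta\sp{\R}\le 16\varepsilon/17+\varepsilon^2/17$, and since $\varepsilon\le 1$ the last term is at most $\varepsilon/17$, so the sum is at most $\varepsilon$. For the expected sample complexity, I would substitute $\varepsilon\mapsto\varepsilon/17$ into the expression $|\cS||\cA|\big(\nu^2/\varepsilon^2+1+\delta(1+\mu/\varepsilon)^2(1+(\kappa/\varepsilon)^2)\big)$ and bound each term. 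The first term becomes $289\,\nu^2/\varepsilon^2\le 1156\,\sp{h^*}^2/\varepsilon^2$; the constant $1$ is dominated because $\sp{h^*}/\varepsilon\ge 1$; and, using $\mu\le 2\sp{h^*}$ and $\kappa\le 1$, the last term is at most $(\varepsilon^2/17)\big(1+34\sp{h^*}/\varepsilon\big)^2\big(1+(17/\varepsilon)^2\big)=O\!\big(\sp{h^*}^2/\varepsilon^2\big)$. Hence every term is $O(\sp{h^*}^2/\varepsilon^2)$, and absorbing the logarithmic factors hidden in the $\widetilde O$ of \cref{thm:E-err-D}—which, with $\delta=\varepsilon^2/17$ and $\mu\le 2\sp{h^*}$, depend only logarithmically on $|\cS||\cA|\sp{h^*}/\varepsilon$—gives the claimed expected complexity $\widetilde O(|\cS||\cA|\sp{h^*}^2/\varepsilon^2)$.

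Since the argument is just a chain of substitutions and elementary inequalities, there is no genuine obstacle; the only delicate point is the scaling of $\delta$. The $\delta$-dependent summand in the expected sample complexity of \cref{thm:E-err-D} is of order $\varepsilon^{-4}$ after the rescaling $\varepsilon\mapsto\varepsilon/17$, so one needs $\delta=\Theta(\varepsilon^2)$ to bring it down to the target order $\varepsilon^{-2}$; at the same time this choice keeps the extra additive term $\delta\sp{\R}$ in the expected policy error at $O(\varepsilon)$. This is why $\delta=\varepsilon^2/17$ (with the matching rescaling of the error to $\varepsilon/17$) is precisely the right tuning, and I would present the constants in exactly this balanced form.
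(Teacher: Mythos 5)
Your proposal is correct and follows exactly the same route as the paper: apply \cref{thm:E-err-D} with the rescaled accuracy $\varepsilon/17$ and $\delta=\varepsilon^2/17$, bound $\mu=\nu=\sp{Q^*}\le\sp{\R}+\sp{h^*}\le 2\sp{h^*}$ and $\kappa=\sp{\R}\le 1$, and check that the $\delta$-dependent summand collapses to $O(\sp{h^*}^2/\varepsilon^2)$. The paper states this in one line; you have simply filled in the (correct) arithmetic, including the key observation that $\delta=\Theta(\varepsilon^2)$ is what tames the fourth-order term.
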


Interestingly, by setting $\delta=\varepsilon^2/17$, the expected sample complexity of \cref{cor:expected} has same order as \cref{cor:err}. 
We are not aware of such estimates on the expected complexity for MDPs with a generative model.

\section{Discounted MDPs}
In this section, we extend {\rm SAVIA} to the discounted setup. We will show that with minor changes the approach applies to the discounted case, using essentially  the same key ideas. 
 
\subsection{The model}
Let us recall the setup. We consider a discounted Markov decision process 
$(\cS, \cA, \cP, r, \gamma)$, where $\gamma \in (0,1)$ is the discount factor. The assumptions on the state and action spaces, transition probabilities, and rewards remain the same as in the average reward case, except for condition (H) which is no longer required. Given an initial state $s_0=s$ and action $a_0=a$ and a stationary and deterministic policy $\pi\colon \cS \rightarrow \cA$, the $Q$-value function is now defined by
$$Q_{\pi}(s, a)=\expec_{\pi}[\mbox{$\sum^{\infty}_{t=0}$} \gamma^t r(s_t, a_t) \,|\, s_0=s, a_0=a]$$
 where $\expec_{\pi}$ denotes the expected value over all trajectories $(s_0, a_0, s_1, a_1, \dots, s_{t}, a_{t},\dots)$ induced by $\cP$ and $\pi$.
 
The optimal $Q$-value is $Q^{*}(s,a)=\max_{\pi}Q_{\pi}(s,a)$ and 
an optimal policy chooses $\pi(s)\in\Argmax_{a\in\cA}Q^*(s,a)$. A policy $\pi$ is called $\varepsilon$-optimal  if  $ \infn{Q^*-Q_{\pi}} \le \varepsilon$. 

It is well-known that $Q^*$ is the unique solution of the following the Bellman equation, for all $(s,a)\in\cS\times\cA$
\begin{align*}
Q(s,a)=\R(s,a)+\gamma \sum_{s'\in\cS}\cP(s'|s,a)\,\max_{a'\in\cA}Q(s',a').
\end{align*}
Similarly to the  case of average rewards, by
introducing the $\gamma$-contracting map
$\T_\gamma:\reals^{\cS\times\cA}\to \reals^{\cS\times\cA}$   defined as $\T_\gamma(Q)=\R+\gamma \,\cP\,\maxA(Q)$, 
this is  equivalent to the fixed point equation
$Q=\T_\gamma(Q)$. 

\subsection{Sample complexity of {\rm SAVID}}\label{Se:complexity1-d}

We present now our algorithms {\rm SAVID} and {\rm SAVID\textbf{+}} adapted to the setting of discounted MDPs. The main difference is the introduction of the discount factor in the  sampling process updates
\begin{equation*}
    \mbox{$T^k=T^{k-1}+\gamma\, D^k=\R+\gamma\sum_{i=0}^k D^i$}
\end{equation*}
and the fact that we measure errors using the infinity norm instead of the span seminorm. Also, the previous variable $h$ is now called $V$, to reflect the nature of the value function in this setting. With these premises, all the elements of our approach,  including the anchored value iteration, recursive sampling, and proof techniques, are directly adapted to the discounted setup.

\begin{algorithm}[ht!]
   \caption{\mbox{\rm SAVID}$(Q^0,n,\varepsilon,\delta,\gamma)$}
   \label{alg:example-d}
\begin{algorithmic}
     \STATE {\bfseries Input:} $Q^{0}\!\in\real^{\cS\times\cA}$\,;\,$n\in\NN$\,;\,$\varepsilon>0$\,;\,$\delta\in(0,1)$
   \STATE $\alpha\hspace{0.8ex}=\ln(2|\cS||\cA|(n\!+\!1)/\delta)$
   \STATE $T^{-1}=\R$\,;\;$V^{-1}=0$\,;\,$\beta_0=0$
    \FOR{ $k=0,\ldots,n$ }
     \STATE $Q^{k} =(1\!-\!\beta_k)\,Q^0+\beta_k\, T^{k-1}$
     \STATE $V^k= \maxA(Q^{k})$
     \STATE $d^k=V^k-V^{k-1}$
    \STATE $m_k= \max\{\lceil 2\,\alpha\,c_k\infn{d^k}^2/\varepsilon^{2}\rceil,1\}$
     \STATE $D^k=\mbox{\sc Sample}(d^k, m_k)$ 
     \STATE $T^k=T^{k-1}+\gamma D^k$ 
   \ENDFOR
   \STATE $\pi^{n}(s) \in \Argmax_{a\in \cA} Q^{n}(s,a)\quad(\forall s\in\cS)$
   \STATE {\bfseries Output:} $Q^{n} ,T^n, \pi^{n} $ 
\end{algorithmic}
\end{algorithm}

\begin{algorithm}[ht!]
  \caption{\mbox{\sc \mbox{\rm SAVID\textbf{+}}}$(Q^0,\varepsilon,\delta, \gamma)$}
  \label{alg:example_sample-d}
\begin{algorithmic}
  \STATE {\bfseries Input:} $Q^{0}\!\in\real^{\cS\times\cA}$\,;\,$\varepsilon>0$\,;\,$\delta\in(0,1)$
  \STATE {\bfseries for} $i=0,1,\dots$ \,\,\textbf{do} 
    \STATE \quad Set $n_i= 2^i,  \delta_i = \delta/c_i $. 
    \STATE \quad  $(Q^{n_i},T^{n_i}, \pi^{n_i} ) = \mbox{\sc \mbox{\rm SAVID}}(Q^0,n_i,\varepsilon,\delta_i, \gamma)$
     \STATE {\bfseries until} $\infn{Q^{n_i}-T^{n_i}}\leq 11\,\varepsilon$
\STATE {\bfseries Output:} $Q^{n_i},T^{n_i}, \pi^{n_i}$
\end{algorithmic}
\end{algorithm}
The following result connects the Bellman residual error to the policy error. Compared to \cref{prop:err}, there is  $1-\gamma$ loss  when we translate the Bellman residual error $\infn{Q-\T_\gamma(Q)}$ into a policy error.
\begin{proposition} \label{prop:err-d}
    Let $Q\in\reals^{\cS\times\cA}$ and $\pi:\cS\to\cA$ a greedy policy such that $\pi(s)\in\Argmax_{a\in\cA}~Q(s,a)$. Then we have $ \infn{Q^*-Q_{\pi}}\leq \frac{2}{1-\gamma}\infn{Q-\T_\gamma(Q)}$.
\end{proposition}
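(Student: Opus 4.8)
The plan is to compare $Q$ both with the optimal $Q$-factor $Q^*$ and with the value $Q_\pi$ of the greedy policy, and then combine the two estimates via a triangle inequality. Write $\eta=\infn{\T_\gamma(Q)-Q}$. The first ingredient is the observation that the greedy choice $\pi(s)\in\Argmax_{a\in\cA}Q(s,a)$ makes the policy-evaluation operator agree with $\T_\gamma$ at the point $Q$. Introducing the policy-evaluation map $\T_\gamma^\pi(Q')(s,a)=\R(s,a)+\gamma\sum_{s'\in\cS}\cP(s'|s,a)\,Q'(s',\pi(s'))$, which is a $\gamma$-contraction for $\infn{\cdot}$ with unique fixed point $Q_\pi$, we have $Q(s',\pi(s'))=\max_{a'\in\cA}Q(s',a')=\maxA(Q)(s')$ for every $s'\in\cS$, hence $\T_\gamma^\pi(Q)=\R+\gamma\,\cP\,\maxA(Q)=\T_\gamma(Q)$. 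In particular $\infn{\T_\gamma^\pi(Q)-Q}=\eta$.

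Next I would bound $\infn{Q-Q^*}$. Since $\T_\gamma$ is a $\gamma$-contraction with fixed point $Q^*$, the triangle inequality gives $\infn{Q-Q^*}\le\infn{Q-\T_\gamma(Q)}+\infn{\T_\gamma(Q)-\T_\gamma(Q^*)}\le\eta+\gamma\infn{Q-Q^*}$, so $\infn{Q-Q^*}\le\eta/(1-\gamma)$. The identical computation applied to $\T_\gamma^\pi$, using its fixed point $Q_\pi$, yields $\infn{Q-Q_\pi}\le\infn{Q-\T_\gamma^\pi(Q)}+\gamma\infn{Q-Q_\pi}=\eta+\gamma\infn{Q-Q_\pi}$, hence $\infn{Q-Q_\pi}\le\eta/(1-\gamma)$ as well. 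Combining the two by the triangle inequality, $\infn{Q^*-Q_\pi}\le\infn{Q^*-Q}+\infn{Q-Q_\pi}\le 2\eta/(1-\gamma)$, which is the claimed bound.

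The only step requiring a little care is the identity $\T_\gamma^\pi(Q)=\T_\gamma(Q)$: this is precisely where the hypothesis that $\pi$ is greedy with respect to $Q$ is used, and it is what lets us transfer the residual bound on $\T_\gamma$ into a residual bound on the policy-evaluation operator. Everything else is a routine application of the contraction property. The factor $2$ and the $1/(1-\gamma)$ loss — compared with the clean span-seminorm estimate of \cref{prop:err} in the average-reward case — are inherent to routing the comparison through $Q$ and through two separate contractions rather than comparing gains directly.
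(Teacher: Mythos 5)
Your proof is correct and follows essentially the same route as the paper: bound $\infn{Q^*-Q}$ and $\infn{Q_\pi-Q}$ separately by $\eta/(1-\gamma)$ via the contraction property and combine by the triangle inequality. You are in fact more careful than the paper's write-up, which labels the second bound ``analogous'' without spelling out the key identity $\T_\gamma^\pi(Q)=\T_\gamma(Q)$ that justifies treating $\T_\gamma Q$ as the image of $Q$ under the $\gamma$-contraction whose fixed point is $Q_\pi$.
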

Recall that a major difficulty in the average reward setting was the fact that in general we do not have an {\em a priori} bound on the optimal bias vector $h^*$\!. For discounted MDPs, we have the simple bound $\infn{Q^*} \le \infn{\R}/(1\!-\!\gamma)$, that can be used to run \mbox{\rm SAVID} and 
which allows us to obtain the following sample complexity.
\begin{theorem}\label{cor:err-d}
   Assume \mbox{\sc (S)}, $ r(s,a) \in [0,1]$ for all $(s,a)\in\cS\times \cA$, and $n\!=\!\lceil{10/((1\!-\!\gamma)\varepsilon)}\rceil$. Let $(Q^n,T^n,\pi^n)$ be the output of $ \mbox{\rm SAVID}(Q^0,n, \epsilon/10, \delta, \gamma)$ with $Q^0=0$ and $\varepsilon \le 1/(1\!-\!\gamma)$. Then, with probability at least $(1-\delta)$ we have
\vspace{-1ex}
  $$ \infn{Q^n-\T_\gamma(Q^n)}\leq \varepsilon,$$
 
  \vspace{-2ex}
   with sample and time complexity ${O}\big(L_{\gamma}|\cS||\cA|/((1\!-\!\gamma)^2\varepsilon^{2})\big)$
   where $L_{\gamma}=\ln (2|\cS||\cA|/((1\!-\!\gamma)\varepsilon \delta))\ln^3 (2/((1\!-\!\gamma)\varepsilon)) $.
\end{theorem}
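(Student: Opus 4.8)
The plan is to derive \cref{cor:err-d} as a specialization of the discounted counterparts of \cref{prop:sam-err0} and \cref{thm:err}, in which the span seminorm $\sp{\cdot}$ is everywhere replaced by the infinity norm $\infn{\cdot}$ and the recursion carries the extra factor $\gamma$, i.e.\ $T^k=T^{k-1}+\gamma D^k$. These discounted versions would be proved in \cref{sec:app_disc} by the same arguments as in the average-reward case; the point is that $\gamma\le 1$ only tightens the concentration estimates (the sampled increment is $\gamma d^k$ instead of $d^k$) and that $\T_\gamma$ is nonexpansive --- in fact a $\gamma$-contraction --- for $\infn{\cdot}$, so Halpern's analysis with $\beta_k=k/(k+2)$ from $(S)$ applies verbatim.

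For the residual bound I would first set $\varepsilon'=\varepsilon/10$, the accuracy passed to {\rm SAVID}. Since $(S)$ gives $2\sum_{k\ge 0}c_k^{-1}\le 1$, the discounted analog of \cref{prop:sam-err0} guarantees that, with probability at least $1-\delta$, the recursive sampling satisfies $\infn{T^k-\T_\gamma(Q^k)}\le\varepsilon'$ simultaneously for $k=0,\dots,n$. Conditioning on this event, the iteration $Q^{k+1}=(1-\beta_{k+1})Q^0+\beta_{k+1}T^k$ is an inexact Halpern iteration for $\T_\gamma$ with step-errors $\le\varepsilon'$, so the error-propagation argument behind \cref{thm:err} would yield $\infn{\T_\gamma(Q^n)-Q^n}\le \frac{4\infn{Q^0-Q^*}}{n+1}+C\varepsilon'$ for an absolute constant $C$ (with $C\le 6$ sufficing; this is the discounted analog of the $+4\varepsilon$ in \cref{thm:err}). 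Then I would plug in the hypotheses: $r(s,a)\in[0,1]$ gives $\infn{Q^*}\le\infn{\R}/(1-\gamma)\le 1/(1-\gamma)$, so with $Q^0=0$ we have $\infn{Q^0-Q^*}\le 1/(1-\gamma)$, while $n=\lceil 10/((1-\gamma)\varepsilon)\rceil$ forces $n+1>10/((1-\gamma)\varepsilon)$; hence $\frac{4\infn{Q^0-Q^*}}{n+1}<\frac{2\varepsilon}{5}$ and $C\varepsilon'\le\frac{3\varepsilon}{5}$, giving $\infn{\T_\gamma(Q^n)-Q^n}\le\varepsilon$. The hypothesis $\varepsilon\le 1/(1-\gamma)$ is used only to make $n$ of the claimed order and to keep the complexity statement clean.

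For the complexity I would invoke the sample-count bound from the discounted version of \cref{thm:err}, namely $O\big(L\,|\cS||\cA|\big((\infn{Q^0-Q^*}^2+\infn{Q^0}^2)/\varepsilon'^2+n^2\big)\big)$ with $L=\ln(2|\cS||\cA|(n+1)/\delta)\log^3(n+2)$, and substitute $Q^0=0$, $\infn{Q^0-Q^*}\le 1/(1-\gamma)$, $\varepsilon'=\varepsilon/10$, $n=O(1/((1-\gamma)\varepsilon))$: each of $\infn{Q^0-Q^*}^2/\varepsilon'^2$, $\infn{Q^0}^2/\varepsilon'^2=0$, and $n^2$ becomes $O(1/((1-\gamma)^2\varepsilon^2))$, while $L=O\big(\ln(2|\cS||\cA|/((1-\gamma)\varepsilon\delta))\log^3(2/((1-\gamma)\varepsilon))\big)=O(L_\gamma)$; since $L_\gamma$ is logarithmic and bounded away from $0$ it absorbs the constants, yielding $O(L_\gamma|\cS||\cA|/((1-\gamma)^2\varepsilon^2))$, and the time complexity follows under the $O(1)$-per-sample convention.

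The main obstacle is the discounted analog of \cref{thm:err} itself, and within it the sample-complexity estimate: one must control $\infn{d^k}=\infn{\maxA(Q^k)-\maxA(Q^{k-1})}\le\infn{Q^k-Q^{k-1}}$ along the inexact anchored iteration and show it decays fast enough that $\sum_{k=0}^n m_k=\sum_k\max\{\lceil\alpha c_k\infn{d^k}^2/\varepsilon'^2\rceil,1\}$ stays $\widetilde{O}(\infn{Q^0-Q^*}^2/\varepsilon'^2+n^2)$. This is exactly where the tailored choice $c_k=5(k+2)\ln^2(k+2)$ and the Halpern fixed-point estimates enter, and where recursive (variance-reduced) sampling is essential --- a naive $T^k=\R+\mbox{\sc sample}(\maxA(Q^k),m_k)$ would need $m_k$ scaling with $\infn{h^k}^2$ and produce a far worse dependence on $1/\varepsilon$. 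Granting that lemma, \cref{cor:err-d} is the routine substitution above.
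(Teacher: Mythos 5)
Your proposal is correct and follows essentially the same route as the paper: the paper proves the discounted analog of \cref{thm:err} (stated as \cref{thm:err-d} in the appendix, with residual bound $\frac{8\infn{Q^0-Q^*}}{n+2}+2\varepsilon'$ and the same sample-count estimate you quote), and then \cref{cor:err-d} is exactly the substitution $Q^0=0$, $\infn{Q^*}\le 1/(1-\gamma)$, $\varepsilon'=\varepsilon/10$, $n=\lceil 10/((1-\gamma)\varepsilon)\rceil$ that you carry out. Your hypothesized constants in the intermediate bound differ slightly from the paper's, but your accounting still closes to $\varepsilon$, and you correctly identify that the real work lies in the deferred discounted lemma rather than in this corollary.
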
 
It is known that for discounted MDPs the lower bound on the complexity to compute an $\varepsilon$-optimal $Q$-value function and an $\varepsilon$-optimal policy is $\widetilde\Omega(|\cS||\cA|/((1\!-\!\gamma)^3\varepsilon^{2}))$ \citep{gheshlaghi2013minimax}. In fact, model-free algorithms can achieve this complexity; see \cite{wainwright2019variance} for $Q$-values and \cite{sidford2018near} for  optimal policies.
Since $\infn{Q-\T_\gamma(Q)} \le (1\!+\!\gamma) \infn{Q^*-Q}$, it follows that these algorithms require $\widetilde O(|\cS||\cA| /((1\!-\!\gamma)^3\varepsilon^{2}))$ to obtain an $\varepsilon$ Bellman residual error. Up to our knowledge, the $\widetilde{O}(|\cS||\cA|)/((1\!-\!\gamma)^2\varepsilon^{2}))$ sample complexity in \cref{cor:err-d} is the best known sample complexity to obtain an $\varepsilon$ residual error for discounted MDPs. 
 
On the other hand, \cref{prop:err-d} implies that, to compute an $\varepsilon$-optimal policy with arbitrary high probability, {\rm SAVID} requires $\widetilde{O}(|\cS||\cA|/((1\!-\!\gamma)^4\varepsilon^{2}))$ sample calls, matching the lower bound up to a factor $1/(1\!-\!\gamma)$. In what follows, we show that {\rm SAVID}\textbf{+} can improve this upper bound, making the dependence explicit on $\infn{Q^*}^2$ and saving a factor of $1/(1\!-\!\gamma)^2$ by using a doubling trick and a stopping rule.
 
\subsection{Sample complexity of {\rm SAVID}\textbf{+}}\label{Se:complexity2-d}

\begin{theorem}\label{cor:err2-d}
Assume \mbox{\sc (S)}, $  r(s,a) \in [0,1]$ for all $(s,a)\in \cS\times \cA$, and $\infn{Q^*} \ge 1$. Let $(Q^N,T^N,\pi^N)$ be the output of $ \mbox{\rm SAVID\textbf{+}}(Q^0,\varepsilon(1\!-\!\gamma)/24,\delta, \gamma)$ with $Q^0=0$ and $\varepsilon \le 1/(1\!-\!\gamma)$. Then, with probability at least $(1-\delta)$ we have
\vspace{-1ex}
  $$ \infn{Q^*-Q_{\pi^N}}\leq 2\infn{Q^N-\T_\gamma(Q^N)}/(1\!-\!\gamma) \leq \varepsilon,$$
 
  \vspace{-2ex}
   with sample and time complexity $${O\!}\left(\widetilde{L}_{\gamma}|\cS||\cA|\infn{Q^*}^{2}/((1\!-\!\gamma)^2\varepsilon^{2})\right)$$
   where $\widetilde{L}_{\gamma}= \ln \big(2|\cS||\cA|/((1\!-\!\gamma)\varepsilon \delta)\big)\ln^4 \big(2/((1\!-\!\gamma)\varepsilon)\big)$.
\end{theorem}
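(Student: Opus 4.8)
The plan is to obtain \cref{cor:err2-d} as a direct specialization of the master complexity bound for {\rm SAVID\textbf{+}} proved in \cref{sec:app_disc} --- the discounted counterpart of \cref{thm:err-D} --- combined with the residual-to-policy conversion of \cref{prop:err-d}, exactly the way \cref{cor:err} was read off from \cref{thm:err-D} in the average-reward case. Recall that {\rm SAVID\textbf{+}} is the doubling-trick/stopping-rule wrapper around {\rm SAVID}, mirroring how {\rm SAVIA\textbf{+}} wraps {\rm SAVIA}. That appendix theorem asserts: with probability at least $1-\delta$, running {\rm SAVID\textbf{+}}$(Q^0,\varepsilon',\delta,\gamma)$ returns $(Q^N,T^N,\pi^N)$ with $\infn{\T_\gamma(Q^N)-Q^N}\le 12\,\varepsilon'$ and sample and time complexity $O\big(\widehat L_\gamma\,|\cS||\cA|\,(\nu_\gamma^2/\varepsilon'^2+1)\big)$, where $\mu_\gamma=\infn{Q^0-Q^*}$, $\nu_\gamma=\infn{Q^0-Q^*}+\infn{Q^0}$, and $\widehat L_\gamma=\ln\!\big(4|\cS||\cA|(1+\mu_\gamma/\varepsilon')/\delta\big)\log^4\!\big(2(1+\mu_\gamma/\varepsilon')\big)$. (The constant $24$ in the hypothesis of \cref{cor:err2-d} is chosen precisely so that the factor $2/(1-\gamma)$ of \cref{prop:err-d} carries $12\,\varepsilon'$ to $\varepsilon$.)

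Granting that theorem, the rest is bookkeeping. First I would take $Q^0=0$, so $\mu_\gamma=\nu_\gamma=\infn{Q^*}$, and use $r(s,a)\in[0,1]$ to get the elementary bound $\infn{Q^*}\le\infn{\R}/(1-\gamma)\le 1/(1-\gamma)$, together with the standing assumption $\infn{Q^*}\ge 1$. Plugging $\varepsilon'=\varepsilon(1-\gamma)/24$ into the residual bound gives $\infn{\T_\gamma(Q^N)-Q^N}\le 12\varepsilon'=\varepsilon(1-\gamma)/2$, whence \cref{prop:err-d}, on the same $(1-\delta)$-probability event, yields $\infn{Q^*-Q_{\pi^N}}\le\frac{2}{1-\gamma}\infn{\T_\gamma(Q^N)-Q^N}\le\varepsilon$, which is exactly the displayed chain. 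For the complexity, substituting $\nu_\gamma=\infn{Q^*}$ and $\varepsilon'=\varepsilon(1-\gamma)/24$ turns $\nu_\gamma^2/\varepsilon'^2+1$ into $O\big(\infn{Q^*}^2/((1-\gamma)^2\varepsilon^2)\big)$, the $+1$ being absorbed because $\infn{Q^*}\ge 1>\varepsilon'$ forces $\nu_\gamma^2/\varepsilon'^2\ge 1$. Finally, $\mu_\gamma/\varepsilon'=24\infn{Q^*}/((1-\gamma)\varepsilon)\le 24/((1-\gamma)^2\varepsilon)$, so inside the $\ln(\cdot)$ and $\ln^4(\cdot)$ the exponent on $(1-\gamma)^{-1}$ only costs constant factors; since $\varepsilon\le 1/(1-\gamma)$ gives $(1-\gamma)\varepsilon\le 1$, every logarithm stays bounded away from $0$ and these constants are absorbed, so $\widehat L_\gamma=O(\widetilde L_\gamma)$ with $\widetilde L_\gamma$ as in the statement, producing the claimed $O\big(\widetilde L_\gamma|\cS||\cA|\infn{Q^*}^2/((1-\gamma)^2\varepsilon^2)\big)$.

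The hard part is therefore the appendix master theorem for {\rm SAVID\textbf{+}}, and I expect that to be the main obstacle, although it retraces the average-reward development with three ingredients to re-verify. First, a discounted analog of \cref{prop:sam-err0}: an Azuma--Hoeffding argument showing that, with the telescoped estimator $T^k=\R+\gamma\sum_{i=0}^kD^i$ and the per-step sample counts $m_k$ of {\rm SAVID}, one has $\infn{T^k-\T_\gamma(Q^k)}\le\varepsilon'$ simultaneously over $k$ with probability $\ge 1-\delta$; the extra factor $\gamma$ in front of each $D^k$ is benign and, if anything, $\gamma$-contractivity tightens the variance control relative to the average-reward case. Second, the Halpern residual estimate $\infn{\T_\gamma(Q^k)-Q^k}\le\frac{4}{k+1}\infn{Q^0-Q^*}$ for $\beta_k=k/(k+2)$, which transfers verbatim since \citet[Lemma 5]{sabach2017first} uses only nonexpansiveness and $\T_\gamma$ is even $\gamma$-nonexpansive. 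Third, the doubling-trick and stopping-rule analysis (analogs of \cref{Prop:3.3} and \cref{thm:err-D}): restarting from $Q^0$ in each loop keeps the stopping events independent across loops, the loop length $n_i$ needs to grow only until $n_i\gtrsim\infn{Q^0-Q^*}/\varepsilon'$ (so the $n_i^2$ contribution of \cref{thm:err}'s discounted analog merges into $\nu_\gamma^2/\varepsilon'^2$), and a crude almost-sure bound on $\infn{d^k}$ --- of the same flavor as the $\infn{Q^*}\le\infn{\R}/(1-\gamma)$ estimate already used to run {\rm SAVID} --- caps the worst-case per-loop sample count so the union bounds over loops close. Once these are in place, \cref{cor:err2-d} follows by the substitution above.
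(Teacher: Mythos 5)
Your proposal is correct and follows essentially the same route as the paper: the paper likewise obtains this result as a direct specialization of the appendix master theorem for {\rm SAVID\textbf{+}} (the discounted analog of \cref{thm:err-D}) together with \cref{prop:err-d} and the bound $\infn{Q^*}\le 1/(1-\gamma)$, with the constant $24$ chosen exactly as you describe. Your sketch of the three ingredients behind that master theorem (the discounted Azuma--Hoeffding bound, the Halpern residual estimate, and the doubling/stopping analysis) also matches the paper's appendix development.
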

Notice that if we use the bound $\infn{Q^*} \le 1/(1\!-\!\gamma)$ we get $\widetilde{O}(|\cS||\cA|/((1\!-\!\gamma)^4\varepsilon^{2}))$ complexity, which matches the one of \mbox{\rm SAVID}. However, the sample complexity of \mbox{\rm SAVID\textbf{+}} shows an explicit dependence on $\infn{Q^*}$ that might be useful for particular MDPs. For instance, if we have $\infn{Q^*} = O(1/\sqrt{1-\gamma})$, the complexity bound in \cref{cor:err2-d} improves to $\widetilde{O}\big(|\cS||\cA|/((1\!-\!\gamma)^3\varepsilon^{2})\big)$. Note however that the worst case example in \citep{gheshlaghi2013minimax} is such that $\infn{Q^*}= \Omega(1/(1\!-\!\gamma))$.

\section{Conclusion}
This work proposed a novel framework Stochastic Anchored Value Iteration that computes an $\varepsilon$-optimal policy with anchored value iteration and recursive sampling. Our model-free algorithm \mbox{\rm SAVIA\textbf{+}} does not require prior knowledge of the bias vector and achieves near-optimal sample complexity for weakly communicating average reward MDPs,  matching the lower bound up to a factor $\sp{h^*}$. Similarly, \mbox{\rm SAVID\textbf{+}} attains near-optimal  complexity for discounted MDPs.  

A possible research direction is to improve the sample complexity of \mbox{\rm SAVIA\textbf{+}} to match the lower bound for weakly communicating MDPs. Other interesting open questions are the analysis of the anchoring framework for general multichain MDPs, and the study of alternative sampling setups such as episodic sampling and online  learning. 

\section*{Acknowledgments} The work of Mario Bravo and Roberto Cominetti was partially supported by FONDECYT 1241805. Jongmin Lee was supported by the Samsung Science and Technology Foundation (Project Number SSTF-BA2101-02).

\section*{Impact Statement}
This paper focuses on the theoretical aspects of reinforcement learning. There are no societal
impacts that we anticipate from our theoretical result.


\bibliography{ICML}
\bibliographystyle{icml2025}

\newpage
\appendix
\onecolumn
\section{Omitted proofs for the average reward setting} \label{app:average}
In this section, we present the proofs omitted in the main body of the paper, along with additional comments to complement our results. We start by analyzing the average reward setup, while the analog results in the discounted setting is presented in \cref{app:discounted}. For better readability, we restate the main results.

\subsection{Proof of \cref{prop:err}}
\begin{manualproposition}{\ref{prop:err}}\label{prop:err_appendix}
 Let $Q\in\reals^{\cS\times\cA}$ and $\pi:\cS\to\cA$ a policy such that $\pi(s)\in\Argmax_{a\in\cA}~Q(s,a)$. Then for all states $s\in \cS$ we have $0\leq g^*-g_\pi(s)\leq\sp{Q-\T(Q)}$.
\end{manualproposition}
\begin{proof}
Let $P_\pi$ the transition matrix of the Markov chain induced by $\pi$, that is, $P_\pi(s'|s)=\cP(s'|s,\pi(s))$.
Then $g_\pi=P_\pi^\infty \R_\pi$ where   $P_\pi^\infty=
\lim_{T\to\infty}\frac{1}{T}\sum_{t=0}^{T-1}(P_\pi)^t$ and $\R_\pi(s)=\R(s,\pi(s))$.  Since $P_\pi^\infty=P_\pi^\infty P_\pi$, it follows that for all $h\in\reals^\cS$ we have $g_{\pi}= P_\pi^\infty (\R_\pi+P_\pi h-h)$. In particular if we take $ h=\maxA(Q)$ so that $ h(s)=Q(s,\pi(s))$, then for every $s'\in\cS$ the term
 $\R_\pi(s')+(P_\pi h)(s')- h(s')$ is exactly  the component $(s',\pi(s'))$ of the matrix $\T(Q)-Q$, and therefore by averaging according to $P^\infty_\pi(\cdot|s)$ we derive the inequality $g_\pi(s)\geq \min_{s',a'} \big(\T(Q)-Q\big)(s',a')$.

Similarly, for an optimal policy $\pi^*$ we have $g^*= P_{\pi^*}^\infty (\R_{\pi^*}+P_{\pi^*} h-h)$ for $h=\maxA(Q)$. Denoting $h'(s)=Q(s,\pi^*(s))$ we have $h'(s)\le h(s)$ and therefore 
$$\R_{\pi^*}(s')+(P_{\pi^*} h)(s')-h(s') \le \R_{\pi^*}(s')+(P_{\pi^*} h)(s')-h'(s').$$ 
Noting that $\R_{\pi^*}(s')+(P_{\pi^*} h)(s')-h'(s')$ is the component $(s',\pi^*(s'))$ of $\T(Q)-Q$, and averaging with $P^\infty_{\pi^*}(\cdot|s)$ we get $g^*=g^*(s)\leq \max_{s',a'} \big(\T(Q)-Q\big)(s',a')$.
Sustracting both estimates we conclude $0\leq g^*-g_\pi(s)\leq \sp{\T(Q)-Q}$.
\end{proof}

\subsection{Proofs of \cref{Se:complexity1}}

Let $\mathcal{F}_k=\sigma(\{D^i:i=0,\ldots,k\})$ denote the natural filtration generated by the sampling process in {\rm SAVIA}, and $\prob(\cdot)$ the probability distribution over the trajectories $(D^k)_{k\in\NN}$.
Notice that, because of the order of the updates, $T^k$ is $\mathcal{F}_k$-measurable  whereas $Q^k$, $h^k$, $d^k$ and $m_k$, being functions of $T^{k-1}$, are $\mathcal{F}_{k-1}$-measurable.

\begin{manualproposition}{\ref{prop:sam-err0}}
Let $c_k>0$  with \mbox{$2\sum_{k=0}^\infty c_k^{-1}\leq 1$} and $T^k, Q^k$ the iterates generated by \mbox{\rm SAVIA}$(Q^0,n,\varepsilon,\delta)$.
Then, with probability at least $1-\delta$ we have $\|T^k-\T(Q^k)\|_\infty\le  \varepsilon$ simultaneously for 
 all $k=0,\ldots,n$.
\end{manualproposition}
\vspace{-3ex} 
\begin{proof}
Let $Y^i=D^i-\cP d^i$ and $X^k=\sum_{i=0}^kY^i$.
Since $h^{-1}=0$, by telescoping $\T(Q^k)=\R+\cP h^k=\R+\sum_{i=0}^k\cP d^i$ and then using \eqref{Eq:telescope} we get $T^k-\T(Q^k)=\sum_{i=0}^k(D^i-\cP d^i)=X^k$. 
     
We proceed to estimate $\prob(\|X^k\|_\infty\geq\varepsilon)$
by adapting the arguments of Azuma-Hoeffding's inequality.
Let $s_{k,j}^{s,a}\sim \cP(\cdot \,|\, s,a)$ for $j=1,\ldots,m_k$ be the samples at the $k$-th iteration for $(s,a)\in\cS\times \cA$, so that 
     $$ Y^{k}(s,a) =\mbox{$\frac{1}{m_k}\sum_{j=1}^{m_k}\big(d^k(s^{s,a}_{k,j})$}-\cP d^k(s,a)\big)\qquad\forall (s,a)\in\cS\times\cA.$$ 
     Since $d^k$ and $m_k$ are $\mathcal{F}_{k-1}$-measurable, it follows that $\expec{[Y^{k}|\mathcal{F}_{k-1}]}=0$ 
     and therefore $X^k$ is an $\mathcal{F}_{k}$-martingale.
From Markov's inequality and the tower property of conditional expectations we get that for each $(s,a)\in\cS\times \cA$ and $\lambda>0$
\begin{align}\nonumber
   \mbox{$ \prob( X^k(s,a) \ge \varepsilon)$} 
   & \le e^{-\lambda\varepsilon}\,\expec\big[\exp(\lambda\, \mbox{$X^k(s,a)$})\big]\\
   &= e^{-\lambda\varepsilon}\,\expec\big[\exp(\lambda\,\mbox{$X^{k-1}(s,a)$}) \;\expec[\exp(\lambda \, Y^{k}(s,a)) \,|\, \mathcal{F}_{k-1}]\big].\label{eq:a1}
\end{align}
Now, conditionally on $\mathcal{F}_{k-1}$, the terms $Y^k_j=\mbox{$\frac{1}{m_k}\big(d^k(s^{s,a}_{k,j})$}-\cP d^k(s,a)\big)$ in the sum of $Y^k(s,a)$ are independent random variables with zero mean  and $|Y^k_j| \le \frac{1}{m_k}\sp{d^k}$ so that Hoeffding's Lemma gives us
\begin{align}
    \expec\big[\exp(\lambda\,Y^{k}(s,a)) \,|\, \mathcal{F}_{k-1}\big]  &=\prod_{j=1}^{m_k}\expec\big[\exp(\lambda\,Y^k_j) \,|\, \mathcal{F}_{k-1}\big] 
    \le \exp\big(\mbox{$\frac{1}{2}$}\lambda^2\sp{d^k}^2/m_k\big). \label{eq:a2}
\end{align}
Using \eqref{eq:a1} and \eqref{eq:a2}, together with $m_k\geq \alpha\,c_k\sp{d^k}^2/\varepsilon^{2}$, 
a simple induction yields $$\expec[\exp(\lambda\,X^k(s,a))]\leq\exp\big(\mbox{$\frac{1}{2}\lambda^2\varepsilon^2\sum^k_{i=0} c^{-1}_i/\alpha$}\big).$$ Then, since $\sum^{\infty}_{i=0}c^{-1}_i\leq\frac{1}{2}$ we get
$\prob(X^k(s,a)\geq\varepsilon)\leq \exp\big(\mbox{$-\lambda\,\varepsilon+\frac{1}{4}\lambda^2\varepsilon^2/\alpha$}\big)$ and 
taking $\lambda=2\alpha/\varepsilon$ we deduce
$$\prob(X^k(s,a)\geq\varepsilon)\leq \exp(-\alpha)=\delta/(2|\cS||\cA|(n\!+\!1)).$$
A symmetric argument yields the same bound for $\prob(X^k(s,a)\leq-\varepsilon)$ so that 
$\prob(|X^k(s,a)|\geq\varepsilon)\leq \delta/(|\cS||\cA|(n\!+\!1))$. Applying a union bound over all $(s,a)\in\cS\times\cA$ we get $\prob(\|X^k\|_\infty\geq\varepsilon)\leq \delta/(n\!+\!1)$, and then a second union bound over $k$ gives $\prob(\bigcup_{k=0}^n\{\|X^k\|_\infty\geq\varepsilon\})\leq \delta$. The conclusion follows by taking the complementary event.
\end{proof}

From now on we 
consider the specific sequences $c_k=5(k+2)\ln^2(k+2)$ and $\beta_k=k/(k+2)$.
We also fix some
solution  $Q^*$  of Bellman's equation $Q^*=\T(Q^*)-g^*$, and we set $h^*=\maxA(Q^*)$.
We keep the notation for all the sequences generated by
{\rm SAVIA} and we assume throughout that
\begin{equation}\label{Eq:Ek}
\infn{T^k-\T(Q^{k})}\le \varepsilon\qquad \text{for all} \,\, k=0,\ldots,n,
\end{equation}
which, in view of \cref{prop:sam-err0}, holds with probability at least $(1-\delta)$.
Also, as noted earlier, for $D=\mbox{\sc Sample}(d,m)$ we have $\sp{D}\leq\sp{d}$, which combined with 
 the nonexpansivity of the map $Q\mapsto \maxA(Q)$ implies
\begin{equation}\label{Eq:spk}\sp{T^k-T^{k-1}}=\sp{D^k} \le \sp{d^k}=\sp{h^k-h^{k-1}}\le \sp{Q^{k}-Q^{k-1}}.
\end{equation}
As a preamble to the proof of \cref{thm:err}
we establish two preliminary technical Lemmas. 
\begin{lemma}\label{Le:uno}Assuming \mbox{\sc (S)} and \eqref{Eq:Ek}, we have that $$\sp{Q^k-Q^*}\leq \sp{Q^0-Q^*}+\mbox{$\frac{2}{3}$}\,\varepsilon\,k, \quad \text{for all} \,\, k=0,\ldots,n.$$ 
\end{lemma}
\begin{proof} From the iteration $Q^{k}=(1-\beta_k)Q^0+\beta_k\,T^{k-1}$ with $\beta_k=\frac{k}{k+2}$ we get
\begin{align}\label{eq:jjj}
    \sp{Q^{k}-Q^*}&\le \mbox{$\frac{2}{k+2}\sp{Q^0-Q^*}+\frac{k}{k+2}\sp{T^{k-1}-Q^*}$}.
\end{align}
Using the invariance of $\sp{\cdot}$ by addition of constants and the nonexpansivity of $\T$ for this seminorm, a triangle inequality together with $\sp{\cdot}\leq 2\|\cdot\|_\infty$ and the bound \eqref{Eq:Ek} imply
\begin{equation}\label{Eq:cit}
    \sp{T^{k-1}-Q^*}=\sp{T^{k-1}-\T(Q^*)}\le 2\varepsilon+\sp{Q^{k-1}-Q^*}
\end{equation}
which plugged back into \eqref{eq:jjj} yields
$$
\sp{Q^{k}-Q^*}\le \mbox{$\frac{2}{k+2}\sp{Q^0-Q^*}+\frac{k}{k+2}\big(2\varepsilon+\sp{Q^{k-1}-Q^*}\big)$}.$$
Denoting $\theta_k=(k+1)(k+2)\sp{Q^{k}-Q^*}$ this becomes
$\theta_k\leq \theta_0\, (k+1)+2\varepsilon\,k(k+1)+\theta_{k-1}$,
and inductively
\begin{align*}
    \theta_k&\leq \mbox{$\theta_0\, \sum_{i=1}^k (i+1)+2\varepsilon\sum_{i=1}^ki(i+1)+\theta_0$}\\
    &=\mbox{$\theta_0\,\frac{1}{2}(k+1)(k+2)+ \frac{2}{3}\,\varepsilon\,k(k+1)(k+2)$.}
\end{align*}
The conclusion then follows dividing by $(k+1)(k+2)$.
\end{proof}

\vspace{2ex}

\begin{lemma}\label{Le:dos} Assume \mbox{\sc (S)} and \eqref{Eq:Ek} and let $\rho_k=2\sp{Q^0\!-Q^*}+\frac{2}{3}\,\varepsilon\,k$. Then 
$$\mbox{$\sp{Q^{k}\!-Q^{k-1}} \le \frac{2}{k(k+1)}\sum_{i=1}^{k}\rho_{i+2}\quad \text{for all} \,\, k=1,\ldots,n.$}$$ 
\end{lemma}
\begin{proof} From 
$Q^{k}=\frac{2}{k+2}Q^0+\frac{k}{k+2}T^{k-1}$ and
$Q^{k-1}=\frac{2}{k+1}Q^0+\frac{k-1}{k+1}T^{k-2}$
we derive
\begin{equation} \label{eq:aux}
    Q^{k}-Q^{k-1}
    =\mbox{$ \frac{2}{(k+1)(k+2)}(T^{k-1}-Q^0) + \frac{k-1}{k+1}(T^{k-1}-T^{k-2})$}.
\end{equation}
Now, \eqref{Eq:cit} together with \cref{Le:uno}  readily imply $\sp{T^{k-1}-Q^0}\leq \sp{T^{k-1}-Q^*} +\sp{Q^*-Q^0}\leq\rho_{k+2}$,  
and therefore using \eqref{Eq:spk} we get
\begin{align*}
    \sp{Q^{k}-Q^{k-1}} 
    &\le \mbox{$\frac{2}{(k+1)(k+2)}\rho_{k+2} +  \frac{k-1}{k+1}\sp{Q^{k-1}- Q^{k-2}}$}.
\end{align*}
Denoting $\tilde \theta_k=k(k+1)\sp{Q^k-Q^{k-1}}$ we have
$\tilde\theta_k\leq \frac{2k}{k+2}\rho_{k+2}+\tilde\theta_{k-1}\leq 2\rho_{k+2}+\tilde\theta_{k-1}$.
Hence $\tilde\theta_k\leq2\sum_{i=1}^k\rho_{i+2}$ from which the result follows directly.
\end{proof}

\begin{manualtheorem}{\ref{thm:err}}
Assume \mbox{\sc (H)} and \mbox{\sc (S)} and let $(Q^n,T^n,\pi^n)$ be the output computed by $\mbox{\rm SAVIA}(Q^0,n,\varepsilon,\delta)$. Then, with probability at least $(1-\delta)$ we have, for all $s \in \cS$
\vspace{-1ex}
$$ g^*-g_{\pi^n}(s)\! \leq \sp{Q^n-\T(Q^n)}\leq \frac{8\sp{Q^0-Q^*}}{n+2}+4\varepsilon,$$

\vspace{-2ex}
with a sample and time complexity of order $${O}\big(L\,|\cS||\cA|\big(({\sp{Q^0-Q^*}^{2}+\sp{Q^0}^{2}})/{\varepsilon^{2}}+n^2\big)\big)$$ 
where  
$L=\ln(|\cS||\cA|(n+1)/\delta)\ln^3(n+2)$.
\end{manualtheorem}
\begin{proof}
From \cref{prop:sam-err0}, with probability at least $(1-\delta)$ we have $\sp{T^k-\T(Q^k)}\leq\varepsilon$ for all $k=0,\ldots,n$. Now, the recursion $Q^n=\frac{2}{n+2}Q^0+\frac{n}{n+2}T^{n-1}$ implies
 \begin{equation}\label{eq:lll}
   Q^n-\T(Q^n)
    =\mbox{$\frac{2}{n+2}\big(Q^0-\T(Q^n)\big)+ \frac{n}{n+2}\big(T^{n-1}-\T(Q^{n-1})\big)+ \frac{n}{n+2} \big(\T(Q^{n-1})-\T(Q^n)\big)$}.
\end{equation}
Using the fact that $Q^*=\T(Q^*)-g^*$ and the invariance of $\sp{\cdot}$ by additive constants, a triangle inequality and the $\sp{\cdot}$-nonexpansivity of $\T(\cdot)$ together with Lemma~\ref{Le:uno}, imply 
$$\sp{Q^0-\T(Q^n)} \le \sp{Q^0-Q^*}+\sp{Q^*-Q^n} \le  \rho_n,$$ 
while Lemma \ref{Le:dos} gives $\sp{\T(Q^{n-1})-\T(Q^n)}\leq\frac{2}{n(n+1)}\sum_{i=1}^{n}\rho_{i+2}$. Thus, applying a triangle inequality to \eqref{eq:lll} and using these estimates together with \cref{prop:err} and $\sp{\cdot}\leq 2\|\cdot\|_\infty$ we obtain
\begin{align*}
   g^*-g_\pi(s)\leq  \sp{Q^n-\T(Q^n)}&\le \mbox{$\frac{2}{n+2}\,\rho_n+ 2\varepsilon+\frac{2}{(n+1)(n+2)}\sum_{i=1}^{n}\rho_{i+2}$}
   \\& = \mbox{$\frac{4(1+2n)}{(n+1)(n+2)}\sp{Q^0-Q^*}+ \frac{4(3+8n+3n^2)}{3(n+1)(n+2)}\varepsilon$}
    \\& \le \mbox{$\frac{8\sp{Q^0-Q^*}}{n+2}+4\varepsilon$}.
\end{align*}

To estimate the complexity, for $k \ge 1$, we use the inequality $\sp{d^k}\le\sp{Q^k-Q^{k-1}}$ in \eqref{Eq:spk} and \cref{Le:dos} to find
\begin{align*}
    \mbox{$\sp{d^k}$}&\leq\mbox{$\frac{2}{k(k+1)}\sum_{i=1}^{k}\rho_{i+2}$}\\
    &=\mbox{$\frac{4}{k+1}\sp{Q^0-Q^*}$} +\mbox{$\frac{2(k+5)}{3(k+1)}\varepsilon$}\\
    &\leq \mbox{$\frac{4}{k+1}\sp{Q^0-Q^*}+2\varepsilon$}.
\end{align*}
Now, to estimate the total number of samples $|\cS||\cA|\sum_{k=0}^n m_k$ 
we recall that $m_k=\max\{\lceil\alpha c_k\sp{d^{k}}^2/\varepsilon^{2}\rceil,1\}$ which can be bounded as $m_k\leq 1+\alpha c_k\sp{d^{k}}^2/\varepsilon^{2}$. Then 
\begin{align}   \mbox{$\sum_{k=0}^nm_k$}&\leq\mbox{$ (n+1)+\frac{\alpha}{\varepsilon^{2}}\sum_{k=0}^n c_k\sp{d^{k}}^2$} \nonumber\\
   &\leq\mbox{$ (n+1)+\frac{10\, \alpha}{\varepsilon^{2}} \ln^2(2)\,\sp{Q_0}^2+\frac{5\alpha}{\varepsilon^{2}}\sum_{k=1}^n (k+2)\ln^2(k+2)\big(\frac{4}{k+1}\sp{Q^0-Q^*}+2\varepsilon\big)^2$} \nonumber\\
      &\leq\mbox{$ (n+1)+\frac{10\, \alpha}{\varepsilon^{2}} \ln^2(2)\sp{Q_0}^2+\sum_{k=1}^n\frac{240\,\alpha}{\varepsilon^{2}(k+1)}\ln^2(k+2)\sp{Q^0-Q^*}^2+40\,\alpha\sum_{k=1}^n(k+2)\ln^2(k+2)$} \nonumber\\
   &=O\!\left( \alpha \sp{Q^0}^2/\varepsilon^{2}+\mbox{$\alpha\ln^3(n+2)\sp{Q^0-Q^*}^2/\varepsilon^{2}+\alpha \,n^2\ln^2(n+2)$}\right),
   \label{eq:last}
\end{align}
where the third inequality results by using the trivial bounds $(a+b)^2\leq2a^2+2b^2$ and $\frac{k+2}{k+1}\leq\frac{3}{2}$,
and the last equality from integral estimations of the sums.
The announced complexity bound then follows by multiplying this estimate by $|\cA||\cS|$ and using the definition 
of $\alpha$ and $L$.
\end{proof}

{\sc Remark.} The complexity analysis above can be refined 
to obtain an explicit multiplicative constant in $O(\cdot)$.

\subsection{Proofs of \cref{Se:complexity2}}
We now proceed to establish the finite convergence and complexity of the algorithm {\rm SAVIA}\textbf{+}, for which we introduce some additional notation.
Let us recall the definition of the parameters
\vspace{-1ex}
\begin{align*}
    \mu&=\sp{Q^0-Q^*},\\
    \nu&=\sp{Q^0-Q^*}+\sp{Q^0},\\
    \kappa&=\max\{\sp{\R},\sp{Q^0}\}.
\end{align*}
The stopping time of \mbox{\rm SAVIA\textbf{+}} is the random variable 
$$
 \st =\inf\{{ n_i \in \mathbb N \,:\, \sp{Q^{n_i}-T^{n_i}} \le 14\,\varepsilon}\}
$$
with $T^{n_i}$ and $Q^{n_i}$'s the iterates generated in each loop of \mbox{\rm SAVIA\textbf{+}}$(Q^0,\varepsilon,\delta)$.
For notational convenience we also define 
\begin{equation*}
 I =\inf\{ i \in \mathbb N \,:\, \sp{Q^{n_i}-T^{n_i}} \le 14\,\varepsilon\}
\end{equation*}
so that in fact $N=2^I$. We let $i_0 \in \NN$ be the smallest integer satisfying $n_{i_0}\ge\sp{Q^0-Q^{*}}/\varepsilon=\mu/\varepsilon$, so that 
either $i_0=0$ and $n_{i_0}=1$ or $n_{i_0-1}=n_{i_0}/2<\mu/\varepsilon$, which combined imply $n_{i_0}\le 2(1+\mu/\varepsilon)$.

In order to estimate the sample complexity,
 we consider the random events
    \begin{equation*}
S_{i}=\{\sp{Q^{n_i}-T^{n_i}} \le 14\,\varepsilon\} \quad \text{and} \quad G_i=\{ \infn{T^k-\T (Q^k)} \le \varepsilon,\;\forall k=0,\ldots,n_i\}
\end{equation*}
with $T^{k}$ and $Q^{k}$'s the inner iterates generated during the execution of \mbox{\rm SAVIA}$(Q^0,n_i,\varepsilon,\delta_i)$ in the $i$-th loop of  \mbox{\rm SAVIA\textbf{+}}, and denote by $M_i$ the number of 
samples used during this call, so that the total sample complexity is $M\triangleq\sum^{I}_{i=0}M_i$. Observe that the $M_i$'s and $M$ are  random variables.

With these preliminary definitions we proceed to establish the following simple but useful preliminary estimate.
    \begin{lemma}\label{Le:tres}
       Assume \mbox{\sc (H)} and \mbox{\sc (S)}. Then, for all $i \ge i_0$ we have $\prob (S_{i}) \ge  \prob (G_{i}) \ge 1-\delta_{i}$.
    \end{lemma}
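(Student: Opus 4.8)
The plan is to establish the two inequalities $\prob(S_i) \ge \prob(G_i)$ and $\prob(G_i) \ge 1 - \delta_i$ separately, the second being essentially a restatement of \cref{prop:sam-err0} and the first requiring a deterministic implication $G_i \subseteq S_i$ valid for $i \ge i_0$. First I would handle the easy inclusion $\prob(G_i) \ge 1 - \delta_i$: the event $G_i$ is exactly the conclusion of \cref{prop:sam-err0} applied to the $i$-th inner call $\mbox{\rm SAVIA}(Q^0, n_i, \varepsilon, \delta_i)$, since $c_k = 5(k+2)\ln^2(k+2)$ satisfies $2\sum_{k=0}^\infty c_k^{-1} \le 1$ under assumption \mbox{\sc (S)}. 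Hence \cref{prop:sam-err0} gives $\prob(G_i) \ge 1 - \delta_i$ directly.

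The substantive step is to show $G_i \subseteq S_i$ for all $i \ge i_0$, which yields $\prob(S_i) \ge \prob(G_i)$ by monotonicity of probability. Suppose $G_i$ holds, i.e. $\infn{T^k - \T(Q^k)} \le \varepsilon$ for all $k = 0, \ldots, n_i$. Then assumption \eqref{Eq:Ek} is in force for this run, so I can invoke \cref{thm:err} (whose proof only uses \eqref{Eq:Ek} and \mbox{\sc (S)}), obtaining the true Bellman residual bound
\[
\sp{\T(Q^{n_i}) - Q^{n_i}} \le \frac{8\sp{Q^0 - Q^*}}{n_i + 2} + 4\varepsilon = \frac{8\mu}{n_i+2} + 4\varepsilon.
\]
Since $i \ge i_0$ means $n_i \ge n_{i_0} \ge \mu/\varepsilon$, the first term is at most $8\varepsilon$, giving $\sp{\T(Q^{n_i}) - Q^{n_i}} \le 12\varepsilon$. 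Finally, by the Remark after \cref{prop:sam-err0}, under \eqref{Eq:Ek} the empirical residual and the true residual differ by at most $2\varepsilon$ in span seminorm (triangle inequality for $\sp{\cdot}$ plus $\sp{\cdot} \le 2\infn{\cdot}$ applied to $T^{n_i} - \T(Q^{n_i})$), so $\sp{T^{n_i} - Q^{n_i}} \le 12\varepsilon + 2\varepsilon = 14\varepsilon$, which is precisely the event $S_i$.

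Chaining the two pieces gives $\prob(S_i) \ge \prob(G_i) \ge 1 - \delta_i$ for all $i \ge i_0$, as claimed. The main (mild) obstacle is simply bookkeeping: making sure that \eqref{Eq:Ek} — an assumption maintained throughout \cref{Se:complexity1} for a \emph{single} call of \mbox{\rm SAVIA} — is correctly identified with the event $G_i$ for the $i$-th inner call here, and that the threshold $14\varepsilon$ in the stopping rule matches the $12\varepsilon + 2\varepsilon$ coming out of \cref{thm:err} and the Remark; the choice of $i_0$ as the smallest $i$ with $n_i \ge \mu/\varepsilon$ is exactly what makes the $8\mu/(n_i+2)$ term collapse to $\le 8\varepsilon$.
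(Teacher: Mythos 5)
Your proposal is correct and follows essentially the same route as the paper: it obtains $\prob(G_i)\ge 1-\delta_i$ from \cref{prop:sam-err0}, and proves the inclusion $G_i\subseteq S_i$ for $i\ge i_0$ by combining the residual bound $\frac{8\mu}{n_i+2}+4\varepsilon\le 12\varepsilon$ from \cref{thm:err} (valid on $G_i$ since the event is exactly hypothesis \eqref{Eq:Ek}) with the triangle inequality $\sp{T^{n_i}-Q^{n_i}}\le \sp{T^{n_i}-\T(Q^{n_i})}+\sp{\T(Q^{n_i})-Q^{n_i}}\le 2\varepsilon+12\varepsilon=14\varepsilon$. The bookkeeping you flag (identifying \eqref{Eq:Ek} with $G_i$ and matching the $14\varepsilon$ threshold) is handled exactly this way in the paper.
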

    \begin{proof}
          \cref{prop:sam-err0} guarantees $\prob{(G_i)}\ge 1-\delta_{i}$ so it suffices to show that $G_i \subseteq S_i$. This follows 
from \cref{thm:err} since for $i \ge i_0$ and all $\omega \in G_i$ we have
$$
\begin{aligned}
\sp{Q^{n_i}(\omega)-T^{n_i}(\omega)} &\le \sp{Q^{n_i}(\omega)-\T(Q^{n_i})(\omega)}+\sp{\T(Q^{n_i})(\omega)-T^{n_i}(\omega)}\\
&\le \mbox{$\frac{8\sp{Q^0-Q^*}}{n_i+2}+4\,\varepsilon+2\,\varepsilon$} \\
&\le 14\,\varepsilon.
\end{aligned}
$$

\vspace{-5ex}
    \end{proof}
    
\begin{manualproposition}{\ref{Prop:3.3}}
Assume \mbox{\sc (H)} and \mbox{\sc (S)}. Then, $\expec [N] \le {2(1+\mu/\varepsilon)}/{(1-\delta)}$. In particular $N$ is finite almost surely and \mbox{\rm SAVIA\textbf{+}}$(Q^0,\varepsilon,\delta)$ stops with probability $1$ after finitely many loops.  
\end{manualproposition}
\begin{proof} 
Since in each loop {\rm SAVIA}\textbf{+}$(Q^0,n_i,\varepsilon,\delta_i)$ restarts afresh from $Q^0$, the events $\{S_i:i\in\NN\}$ are mutually independent and therefore
$$\prob(I=i) = \mbox{$\prob(\bigcap^{i-1}_{j=0} S^c_j \cap S_i)= \prod^{i-1}_{j=0}\prob( S^c_j)\cdot \prob( S_i)$}.$$
From \cref{Le:tres} we get $\prob (S^c_{i}) \le  \prob (G^c_{i}) \le\delta_{i}$ for all $i \ge i_0+1$ and then $\prob (I =i) \le \prod^{i-1}_{j=i_0} \delta_j$. On the other hand, from $2
\sum_{i=0}^\infty c_i^{-1}\leq 1$ it follows that
 $\delta_j=\delta/c_j\leq\delta/2$ and therefore
 $\prob (I =i) \le (\delta/2)^{i-i_0}$. Using this estimate 
and the identity $n_i=n_{i_0}2^{i-i_0}$, we obtain
     $$
    \begin{aligned}
    \expec[\st]&= \mbox{$\sum^{\infty}_{i=0}n_i\,\prob(N=n_i)$} \\&\le \mbox{$n_{i_{0}}+ \sum^{\infty}_{i=i_0+1}n_{i_0}2^{i-i_0}\,\prob(I=i)$}\\
    &\leq \mbox{$n_{i_{0}}(1+ \sum^{\infty}_{i=i_0+1} \,\delta^{i-i_0})$}.
        \end{aligned}
$$

\vspace{-1ex}
This last expression is exactly $n_{i_{0}}/(1-\delta)$ and the conclusion follows using the bound $n_{i_{0}}\leq 2(1+\mu/\varepsilon)$.
\end{proof}

\begin{manualtheorem}{\ref{thm:err-D}}
Assume \mbox{\sc (H)} and \mbox{\sc (S)}. Let $(Q^{N},T^{N},\pi^{N})$ be the output of $\mbox{\sc \mbox{\rm SAVIA\textbf{+}}}(Q^0,\varepsilon,\delta)$. Then, with probability at least $(1-\delta)$ we have, for all $s\in\cS$ 
\vspace{-1ex}
\begin{equation*}
g^*-g_{\pi^N}(s)\! \leq \sp{Q^N-\T(Q^N)}\leq 16\,\varepsilon
\end{equation*}
 with sample and time complexity ${O}\big(\widehat{L}\,|\cS||\cA|(
\nu^2/\varepsilon^{2}\!+\!1)\big)$ where $\widehat{L}=\ln\!\big(|\cS||\cA|\,(1\!+\!\mu/\varepsilon)/\delta \big)\,\ln^4(1\!+\!\mu/\varepsilon)$. 
\end{manualtheorem}
\begin{proof}
The inequality between the policy error and the Bellman error follows from \cref{prop:err} so it suffices to prove the second one. Consider the events $A=\{I\leq i_0\}$ and $B=\bigcap_{i=0}^\infty G_i$. We claim that $\prob(A\cap B)\geq 1-\delta$. Indeed, by \cref{prop:sam-err0} we have $\prob( G^c_i) \le \delta_i$ so that
$$\mbox{$\prob(B^c)=\prob(\bigcup^{\infty}_{i=1} G^c_i) \le \sum_{i=1}^\infty {\delta}/{c_i} \leq {\delta}/{2}$},$$
while \cref{Le:tres} implies $\prob(A) \ge \prob \left (S_{i_0}) \ge \prob(G_{i_0} \right ) \geq 1-\delta_{i_0} \geq 1- \delta/2$, which combined yield $\prob(A^c\cup B^c)\leq\delta$.
Now, from the definition of $N$, $I$ and $G_i$, it follows that on the event $A\cap B$ we have
\begin{align*}
    \sp{Q^N-\T(Q^N)}&\leq \sp{Q^N-T^N}+\sp{T^N-\T(Q^N)}+\leq 14\,\varepsilon+2\,\varepsilon=16\,\varepsilon.
\end{align*} 
Moreover, using \eqref{eq:last} in the proof of \cref{thm:err}, we can to bound the total sample complexity $M\triangleq\sum^{I}_{i=0}M_i$ as
$$M\leq \mbox{$\sum^{i_0}_{i=0}M_i=O\left(|\cS||\cA|\sum^{i_0}_{i=0}\alpha_i \sp{Q^0}^2/\varepsilon^{2}+ \mbox{$\alpha_i\ln^3(n_i+2)\sp{Q^0-Q^*}^2/\varepsilon^{2}+\alpha_i\,n_i^2\ln^2(n_i+2)$}\right),$}$$
where $\alpha_i= \ln(2|\cS||\cA|(n_i\!+\!1)c_i/\delta)$ 
is the parameter used in the $i$-th internal cycle of \mbox{\rm SAVIA\textbf{+}}. Note that $(n_i\!+\!1)c_{i}\leq 14 n_{i}^2$ and therefore $\alpha_i\leq \ln(28|\cS||\cA|n_i^2/\delta)$. Now, since the terms in the sum above increase with $i$, using the estimate $n_{i_0} \leq 2(1+\mu/\varepsilon)$ we conclude
\begin{align*}
M &\leq |\cS||\cA|\,\alpha_{i_0}(i_0\!+\!1)\,O\left(  \sp{Q^0}^2/\varepsilon^{2}+ \mbox{$\ln^3(n_{i_0}\!+2)\sp{Q^0-Q^*}^2/\varepsilon^{2}+n_{i_0}^2\ln^2(n_{i_0}\!+2)$}\right)\\
&\leq |\cS||\cA|\, \alpha_{i_0}\ln^4(n_{i_0}\!+2)\,O\left(\sp{Q^0}^2/\varepsilon^{2} + \mbox{$\mu^2/\varepsilon^{2}+4(1+\mu/\varepsilon)^2$}\right)\\
&=O\big( \widehat{L}\,|\cS||\cA|\, O\!\left( \nu^2/\varepsilon^{2}+1\right)\big).
\end{align*}

\vspace{-5ex}
\end{proof}


\vspace{1ex}
\begin{manualcorollary}{\ref{cor:err}}
Assume \mbox{\sc (H)}, \mbox{\sc (S)}, $r(s,a) \in[0,1]$ for all $(s,a)\in\cS\times\cA$, and $\sp{h^{*}}\ge 1$. Let $(Q^N,T^N,\pi^N)$ be the output of $\mbox{\rm SAVIA\textbf{+}}(Q^0,\varepsilon/16,\delta)$
with $Q^0=0$ and $\varepsilon\leq 1$. Then, with probability at least $(1-\delta)$ we have,  for all $s\in\cS$
\vspace{-1ex}
  $$g^*-g_{\pi^N}(s)\leq \sp{Q^N-\T(Q^N)} \leq \varepsilon$$
 
  \vspace{-2ex}
   with sample and time complexity ${O}\big(\widetilde{L}\,|\cS||\cA|\sp{h^*}^{2}/\varepsilon^{2}\big)$
   where $\widetilde{L}= \ln \left(|\cS||\cA|\sp{h^*}/(\varepsilon \delta)\right)\ln^4 (\sp{h^*}/\varepsilon)$.
\end{manualcorollary}
\begin{proof}
Since $Q^0=0$ we have $\sp{Q^0-Q^*}=\sp{Q^*}=\sp{\R+\cP\max h^*}\leq\sp{\R}+\sp{h^*}$.  The result then follows from \cref{{thm:err-D}} by noting that $(1+\mu/\varepsilon)=O(\sp{h}^2/\varepsilon)$ which transforms $\widehat{L}$ into the multiplicative factor 
 $\widetilde{L}$.
\end{proof}

Next we proceed to establish the approximation and complexity results in expectation. We begin by the following technical Lemma which provides a crude bound for the sample complexity of each cycle of {\rm SAVIA\textbf{+}}.

\begin{lemma}\label{le:rough}
Let $M_i= |\cS||\cA|\mbox{$\sum_{j=0}^{n_i} m_j$}$ be the number of samples used in
\mbox{\rm SAVIA}$(Q^0,n_i,\varepsilon,\delta_i)$ during a given cycle $i \in \NN$. Then
$M_i\leq |\cS||\cA|\;O\!\left (n_i+(\kappa/\varepsilon)^2\alpha_i\, n_i^2 \ln^2(n_i+2) \right )$
where $\alpha_i= \ln(|\cS||\cA|(n_i\!+\!1)/\delta_i)$.

\end{lemma}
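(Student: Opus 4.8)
The plan is to reduce everything to a single \emph{deterministic} bound on the spans of the increments $d^k=h^k-h^{k-1}$ computed inside $\mbox{\rm SAVIA}(Q^0,n_i,\varepsilon,\delta_i)$, namely $\sp{d^k}\le\kappa$ for all $k\ge 0$, where $\kappa=\max\{\sp{\R},\sp{Q^0}\}$, and then to substitute this into $m_k=\max\{\lceil\alpha_i c_k\sp{d^k}^2/\varepsilon^2\rceil,1\}\le 1+\alpha_i c_k\kappa^2/\varepsilon^2$ and sum over $k$. The key point is that this bound on $\sp{d^k}$ must hold \emph{unconditionally}, with no appeal to the good sampling event of \cref{prop:sam-err0} and with no a priori control of $\sp{Q^*}$ or $\sp{h^*}$; it is a purely algebraic feature of the anchored recursion.

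First I would establish $\sp{d^k}\le\kappa$ by strong induction on $k$. For $k=0$, since $\beta_0=0$ we have $Q^0=Q^0$ and $d^0=\maxA(Q^0)$, hence $\sp{d^0}\le\sp{Q^0}\le\kappa$. For the inductive step, assume $\sp{d^i}\le\kappa$ for all $i\le k-1$. By \eqref{Eq:spk} it suffices to bound $\sp{Q^k-Q^{k-1}}$, and for that I would use the identity \eqref{eq:aux} (valid for $k\ge1$)
\[
Q^k-Q^{k-1}=\tfrac{2}{(k+1)(k+2)}\,(T^{k-1}-Q^0)+\tfrac{k-1}{k+1}\,(T^{k-1}-T^{k-2}),
\]
together with the telescoping formula \eqref{Eq:telescope}, $T^{k-1}=\R+\sum_{i=0}^{k-1}D^i$, and the deterministic inequality $\sp{D^i}\le\sp{d^i}$. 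These yield $\sp{T^{k-1}-T^{k-2}}=\sp{D^{k-1}}\le\kappa$ and $\sp{T^{k-1}-Q^0}\le\sp{\R}+\sp{Q^0}+\sum_{i=0}^{k-1}\sp{d^i}\le(k+2)\kappa$, so that
\[
\sp{Q^k-Q^{k-1}}\le\tfrac{2}{(k+1)(k+2)}(k+2)\kappa+\tfrac{k-1}{k+1}\kappa=\Big(\tfrac{2}{k+1}+\tfrac{k-1}{k+1}\Big)\kappa=\kappa,
\]
which closes the induction. The exact cancellation $\tfrac{2}{k+1}+\tfrac{k-1}{k+1}=1$ is precisely where the choice $\beta_k=k/(k+2)$ is used, and it leaves no slack.

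Given $\sp{d^k}\le\kappa$, the rest is a routine summation. Using $c_k=5(k+2)\ln^2(k+2)$ and $m_k\le 1+\alpha_i c_k\kappa^2/\varepsilon^2$,
\[
\sum_{k=0}^{n_i}m_k\le(n_i+1)+\frac{\alpha_i\kappa^2}{\varepsilon^2}\sum_{k=0}^{n_i}5(k+2)\ln^2(k+2)=O\!\big(n_i+(\kappa/\varepsilon)^2\alpha_i\,n_i^2\log^2(n_i+2)\big),
\]
the last estimate being an integral comparison for $\sum_{k\le n_i}(k+2)\ln^2(k+2)=O(n_i^2\log^2(n_i+2))$; multiplying by $|\cS||\cA|$ gives the stated bound on $M_i$. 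I expect the induction in the first step to be the only real obstacle: one must resist bounding $\sp{T^{k-1}-Q^0}$ through convergence of the iteration (which would bring in $\sp{Q^*}$, unavailable here) and instead use only the crude telescoping $T^{k-1}=\R+\sum_{i<k}D^i$ with $\sp{D^i}\le\sp{d^i}$, which is exactly what makes the recursion self-bounding.
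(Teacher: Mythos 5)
Your proof is correct and follows essentially the same route as the paper: an unconditional induction giving $\sp{d^k}\le\kappa$ via the identity \eqref{eq:aux} and the exact cancellation $\tfrac{2}{k+1}+\tfrac{k-1}{k+1}=1$, followed by summing $m_k\le 1+\alpha_i c_k\kappa^2/\varepsilon^2$. The only cosmetic difference is that the paper carries the auxiliary invariant $\sp{T^{k-1}}\le(k+1)\kappa$ inside the induction, whereas you recover the equivalent bound $\sp{T^{k-1}-Q^0}\le(k+2)\kappa$ directly from the telescoping formula \eqref{Eq:telescope}; both yield identical estimates.
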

\begin{proof}
We first show by induction that for all $k=0,\ldots,n_i$ we have $\sp{d^k} \le \kappa$ and $\sp{T^{k-1}} \le (k+1) \kappa$. For $k=0$, this is true by initialization since $d^0=\max_\cA Q^0$ and $T^{-1}=\R$. Using \eqref{Eq:spk}, \eqref{eq:aux}, and the induction hypothesis, we have
\begin{align*}
    \sp{d^k}\le \sp{Q^{k}-Q^{k-1}} &\le \mbox{$ \frac{2}{(k+1)(k+2)}\sp{T^{k-1}-Q^0} + \frac{k-1}{k+1}\sp{d^{k-1}}$}\\
    &\le \mbox{$\frac{2}{(k+1)(k+2)}((k+1)\kappa + \kappa) + \frac{k-1}{k+1}\kappa=\kappa$}
    \end{align*}
    and consequently
    \begin{align*}
    \sp{T^k}\leq\sp{T^{k-1}}+\sp{D^k}&\leq(k+1)\kappa+\sp{d^k}\leq(k+2)\kappa.
    \end{align*}
Then, using these bounds, the sample complexity $M_i$ for the $i$-th cycle can be readily bounded as
\begin{align*}
 M_i&\leq |\cS||\cA|(\mbox{$ (n_i+1)+(\alpha_i/\varepsilon^{2})\sum_{j=0}^{n_i} c_j\sp{d^j}^2$})\\
& \le \mbox{$|\cS||\cA|((n_i+1)+5(\kappa/\varepsilon)^2\alpha_i\sum_{j=0}^{n_i}(j+2)\ln^2(j+2))$}\\
 & =  |\cS||\cA|\;O\!\left (n_i+(\kappa/\varepsilon)^2\alpha_i \,n_i^2 \ln^2(n_i+2) \right ).
\end{align*}

\vspace{-5ex}
\end{proof}

\vspace{1ex}
\begin{manualtheorem}{\ref{thm:E-err-D}}
Assume \mbox{\sc (H)} and \mbox{\sc (S)}. Let $(Q^N,T^N,\pi^N)$ be the output of $\mbox{\rm SAVIA\textbf{+}}(Q^0,\varepsilon,\delta)$. Then, for all $s\in\cS$
$$\expec[g^*-g_{\pi^N}(s)] \leq 16\,\varepsilon +\delta \sp{r},$$
    with expected sample and time complexity
 $$\widetilde{O}\big(|\cS||\cA|\mbox{$(\nu^{2}/{\varepsilon^{2}}+1 +
    \delta\, (1+\mu/\varepsilon)^2(1+(\kappa/\varepsilon)^2)$}\big).$$
\end{manualtheorem}
\begin{proof}
Consider the events $A= \{I\leq i_0\}$ and $B= \bigcap_{i=1}^\infty G_i$ as in the proof of
\cref{thm:err-D} which established that 
on the event $A\cap B$ we have $g^*-g_{\pi^n}(s)\leq 16\,\varepsilon$ for all $s \in \cS$ with
$\prob(A\cap B)\geq 1-\delta$.
On the complementary event $(A\cap B)^c$ we can use the crude bound $g^*-g_{\pi^n}(s)\leq \sp{\R}$ and then, since $\prob((A\cap B)^c)\leq\delta$, we derive the first claim
    \[
    \expec[g^*-g_{\pi^n}(s)]\leq 16\,\varepsilon +  \delta\sp{\R}.
    \]
To estimate the expected value of the sample complexity $M=\sum_{i=0}^IM_i$ we note
that $A^c= \cupdot_{i=i_0+1}^\infty \{ I=i\}$, 
and therefore
\begin{align*}
    \expec[M]&=\mbox{$\expec[M|A \cap B ]\; \prob(A \cap B) +\expec[M | A \cap B^c ] \;\prob(A \cap B^c)+\sum_{i=i_0+1}^\infty \expec[M | I=i] \;\prob( I=i).$}
\end{align*}  
Let us bound separately the three terms in this sum. 
For the first term we observe that on the event $A\cap B$ we
can apply the bound in \cref{thm:err-D} and since $\prob(A \cap B) \leq1$ we get with $\widehat{L}=\ln\!\big(|\cS||\cA|\,(1\!+\!\mu/\varepsilon)/\delta\big)\,\ln^4(1\!+\!\mu/\varepsilon)$ that
\[
\begin{aligned}
\expec[M|A \cap B ]\; \prob(A \cap B)
&= {O}\big(\widehat{L}|\cS||\cA|(\nu^{2}/{\varepsilon^{2}}+1)\big).
\end{aligned}  
\]
For the second term, we combine $\prob(A \cap B^c)\leq \prob(B^c)\leq \delta $ with the bound in \cref{le:rough} and $n_{i_0}\leq 2(1+\mu/\varepsilon)$ to get
\begin{align*}
\expec[M|A \cap B^c ]\; \prob(A \cap B^c)&\leq \delta\;\mbox{$ |\cS||\cA|\;\sum_{i=0}^{i_0}O\!\left (n_i+{(\kappa/\varepsilon)^2\alpha_i n_i^2 \ln^2(n_i+2)} \right )$}\\&\leq \delta\,|\cS||\cA|\;\mbox{$O \!\left (n_{i_0}+(\kappa/\varepsilon)^2\alpha_{i_{0}} n_{i_{0}}^2 \ln^3(n_{i_0}\!+2) \right ) $}\\&\leq \delta\,|\cS||\cA|\;\mbox{$O \!\left (n_{i_0}+\widehat{L}\,(\kappa/\varepsilon)^2 n_{i_{0}}^2\right ) $}
\end{align*}  
where the last inequality follows from the bound 
$\alpha_{i_0}\ln^3(n_{i_0}\!+2)\leq O(\widehat{L})$.

For the third term $R\triangleq \sum_{i=i_0+1}^\infty \expec[M|I=i]\; \prob( I=i)$, using again the bound in \cref{le:rough} we get
\begin{align*}
\mbox{$\expec[M|I=i]=\sum_{j=0}^i \expec[M_j]$}&\leq\mbox{$\sum_{j=0}^i
|\cS||\cA|\;O\!\left (n_j+(\kappa/\varepsilon)^2\alpha_j\, n_j^2 \ln^2(n_j+2) \right )$}\\
&=\mbox{$
|\cS||\cA|\;O\!\left (n_i+(\kappa/\varepsilon)^2\alpha_i\, n_i^2 \ln^2(n_i+2) \right )$}
\end{align*}
and therefore
\begin{align*}
R&\leq |\cS||\cA|\;\mbox{$\sum_{i=i_0+1}^\infty O\!\left (n_i+(\kappa/\varepsilon)^2\alpha_i n_i^2 \ln^2(n_i+2) \right )\prob( I=i)$}.
\end{align*}
To estimate the dominant terms in this last sum we recall that $\alpha_i=\ln(2|\cS||\cA|(n_i+1)/\delta_i)$ with $n_i=2^i$ and $\delta_i=\delta/c_i$. Since $\ln((n_i+1)c_i)=O(i)$ we get $\alpha_i=O(\widehat{L}+i)\leq \widehat{L}\,O(i)$, and similarly $\ln^2(n_i+2)=O(i^2)$, so that 
\begin{align*}
R
&\leq |\cS||\cA|\;
\mbox{$\sum_{i=i_0+1}^\infty O\!\left (n_i+\widehat{L}\,(\kappa/\varepsilon)^2\,i^3 n_i^2\right )\prob( I=i)$}.
\end{align*}
Using the fact that $n_i=n_{i_0}2^{i-i_0}$ 
 and noting  that for $i\geq i_0+1$ we have $\prob (I =i) \le  \prod^{i-1}_{j=i_0} \delta_j\leq O\big(\delta\,\prod_{j=i_0}^{i-1}\frac{1}{j+2}\big)$
 (see the proof of 
\cref{Prop:3.3}), by setting $S_1=\sum_{i=i_0+1}^\infty 2^{i-i_0}\prod^{i-1}_{j=i_0} \frac{1}{j+2}$ and $S_2=\sum_{i=i_0+1}^\infty 2^{2(i-i_0)}\,i^3  \prod^{i-1}_{j=i_0} \frac{1}{j+2}$ we  derive
\begin{align*}
R&\leq \delta\,|\cS||\cA|\,\mbox{$O\!\left (S_1\, n_{i_0}+S_2\,\widehat{L}\,(\kappa/\varepsilon)^2n_{i_0}^2\right)$}.
\end{align*}
The sums $S_1$ and $S_2$ can be computed explicitly in terms of incomplete Gamma functions as 
\begin{align*}
S_1&=e^2\,2^{-(i_0+1)}[\Gamma(i_0+2)-\Gamma(i_0+2,2)]\leq (e^2-3)/2,\\
S_2&=84+4i_0 (i_0+5)+67 e^4 2^{-2(i_0+1)} [\Gamma (i_0+2)-\Gamma (i_0+2,4)]=O((i_0\!+\!1)^2).
\end{align*}
so the third term satisfies
\begin{align*}
R&\leq \delta\,|\cS||\cA|\;\mbox{$O\!\left (n_{i_0}+\widehat{L}\,(\kappa/\varepsilon)^2n_{i_0}^2 (i_0\!+\!1)^2\right)$}.
\end{align*}
Putting together these three bounds, by regrouping terms of the same order 
and ignoring logarithmic terms we get the announced bound for the expected value of the sample complexity 
\begin{align*}
    \expec[M]&\le |\cS||\cA|\,O\left(\mbox{$\widehat{L}(\nu^{2}/{\varepsilon^{2}}+1) +
    \delta\,n_{i_0}+\delta\,\widehat{L}\,(\kappa/\varepsilon)^2 n_{i_{0}}^2(i_0+1)^2$}\right)\\
    &= |\cS||\cA|\,\widetilde{O}\left(\mbox{$(\nu^{2}/{\varepsilon^{2}}+1) +
    \delta\, (1+\mu/\varepsilon)^2(1+(\kappa/\varepsilon)^2)$}\right).
\end{align*}  

\vspace{-3ex}
\end{proof}

\begin{manualcorollary}{\ref{cor:expected}}
   Assume \mbox{\sc (H)}, \mbox{\sc (S)},  $r(s,a) \in[0,1]$ for all $(s,a)\in \cS\times \cA$ and $\sp{h^{*}}\ge 1$. Let $(Q^N,T^N,\pi^N)$ be the output of $\mbox{\rm SAVIA$\textbf{+}$}(Q^0,\varepsilon/17,\delta)$ with  $Q^0=0$,  $\varepsilon\leq 1$, and $\delta=\varepsilon^2/17$. Then,  for all $s\in\cS$
  $$\expec[g^*-g_{\pi^N}(s)] \leq \varepsilon,$$
   with
   expected sample complexity  $\widetilde{O}\left({|\cS||\cA|\sp{h^*}^{2}}/{\varepsilon^{2}}\right)$.
\end{manualcorollary}

\begin{proof}
    From \cref{thm:E-err-D} we have
 $\expec[g^*-g_{\pi^n}(s)]\leq \frac{16}{17}\varepsilon +  \frac{\varepsilon^2}{17}\sp{\R}\leq \varepsilon$. Also the expected complexity follows directly from \cref{thm:E-err-D} and the choice of $\delta$.
\end{proof}

\section{Omitted proofs for the discounted reward setting}\label{app:discounted}
A nice feature of our approach, presented mainly in the average reward case, is that it can be applied almost verbatim to the discounted setup. Consequently, the proofs for  discounted MDPs and average reward MDPs are fairly similar. However, for the sake of completeness, the arguments are repeated or referred to those in \cref{app:average} when they are basically the same.
\subsection{Proofs of \cref{Se:complexity1-d}}
\begin{manualproposition}{\ref{prop:err-d}}
     Let $Q\in\reals^{\cS\times\cA}$ and $\pi:\cS\to\cA$ a greedy policy such that $\pi(s)\in\Argmax_{a\in\cA}~Q(s,a)$. Then we have $ \infn{Q^*-Q_{\pi}}\leq \frac{2}{1-\gamma}\infn{Q-\T_\gamma(Q)}$.
\end{manualproposition}
\begin{proof}
A simple triangle inequality gives $\|Q^*-Q_\pi\|_\infty\leq \|Q^*-Q\|_\infty+\|Q_\pi-Q\|_\infty$ so it suffices to show that both terms in this latter sum are bounded by $\frac{1}{1-\gamma}\|Q-\T_\gamma(Q)\|_\infty$. 
The first inequality $\infn{Q^{*}-Q} \le \frac{1}{1-\gamma}\infn{Q-\T_\gamma(Q)}$ results directly 
from the following triangle inequality combined with $Q^*=\T_\gamma(Q^*)$ and the fact that $\T_\gamma$ is a $\gamma$-contraction 
\begin{align*}
        \infn{Q^*-Q} \le \infn{Q^*-\T_\gamma(Q)}+\infn{\T_\gamma(Q)-Q} \le \gamma \infn{Q^*-Q}+\infn{Q-\T_\gamma(Q)}.
    \end{align*}
To establish the second inequality $\infn{Q_{\pi}-Q} \le \frac{1}{1-\gamma}\infn{Q-\T_\gamma(Q)}$, we first note that $Q_\pi$ satisfies the recursive formula
$$\mbox{$Q_\pi(s,a)=r(s,a)+\gamma\sum_{s'\in\cS}p(s'|s,a)\,Q_\pi(s',\pi(s'))$}$$
while the definition of $\pi$ gives  $Q(s',\pi(s'))=\maxA(Q)(s')$ so that
$$\mbox{$\T_\gamma(Q)(s,a)=r(s,a)+\gamma\sum_{s'\in\cS}p(s'|s,a)\,Q(s',\pi(s'))$.}$$
Then, sustracting both formulas we readily get  
$\|Q_\pi-\T_\gamma(Q)\|_\infty\leq\gamma\|Q_\pi-Q\|_\infty$
and therefore
\begin{align*}
\infn{Q_{\pi}-Q} \le \infn{Q_{\pi}-\T_\gamma(Q)}+\infn{\T_\gamma(Q)-Q} \le \gamma \infn{Q_{\pi}-Q}+\infn{\T_\gamma(Q)-Q}
\end{align*}
 which readily implies
$\infn{Q_{\pi}-Q} \le \frac{1}{1-\gamma}\infn{Q-\T_\gamma(Q)}$ as claimed.
\end{proof}

Analogously to the average reward setting, let $\mathcal{F}_k=\sigma(\{D^i:i=0,\ldots,k\})$ be the natural filtration generated by the sampling process in {\rm SAVID}, and $\prob(\cdot)$ the probability distribution over the trajectories $(D^k)_{k\in\NN}$. Again, $T^k$ is $\mathcal{F}_k$-measurable  whereas $Q^k$, $V^k$, $d^k$, and $m_k$ are $\mathcal{F}_{k-1}$-measurable. 

\begin{proposition}\label{prop:sam-err0-d}
Let $c_k>0$  with \mbox{$2\sum_{k=0}^\infty c_k^{-1}\leq 1$} and $T^k, Q^k$ the iterates generated by \mbox{\rm SAVID}$(Q^0,n,\varepsilon,\delta, \gamma)$.
Then, with probability at least $1-\delta$ we have $\|T^k-\T_\gamma(Q^k)\|_\infty\le  \gamma \varepsilon$ simultaneously for 
 all $k=0,\ldots,n$.
\end{proposition}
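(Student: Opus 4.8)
The plan is to reproduce the proof of Proposition~\ref{prop:sam-err0} almost line for line, since the {\rm SAVID} recursion differs from the average-reward one only through the factor $\gamma$ in the update $T^k=T^{k-1}+\gamma D^k$ and through the use of the sup-norm in place of the span seminorm in the error measure.

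\emph{Step 1 (telescoping).} I would set $Y^i=D^i-\cP d^i$ and $X^k=\sum_{i=0}^k Y^i$. Since $V^{-1}=0$, telescoping $d^k=V^k-V^{k-1}$ gives $\sum_{i=0}^k d^i=V^k=\maxA(Q^k)$, so $\T_\gamma(Q^k)=\R+\gamma\,\cP\,\maxA(Q^k)=\R+\gamma\sum_{i=0}^k\cP d^i$; comparing this with the identity $T^k=\R+\gamma\sum_{i=0}^k D^i$ produced by the recursion yields
\[
T^k-\T_\gamma(Q^k)=\gamma\sum_{i=0}^k\big(D^i-\cP d^i\big)=\gamma X^k .
\]
Hence $\infn{T^k-\T_\gamma(Q^k)}\le\gamma\varepsilon$ for all $k=0,\dots,n$ is equivalent to $\infn{X^k}\le\varepsilon$ for all such $k$, and it suffices to bound $\prob\big(\bigcup_{k=0}^n\{\infn{X^k}\ge\varepsilon\}\big)$.

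\emph{Step 2 (martingale and Azuma--Hoeffding).} As in the average-reward case, $d^k$ and $m_k$ are $\mathcal{F}_{k-1}$-measurable, so $\expec[Y^k\mid\mathcal{F}_{k-1}]=0$ and $X^k$ is an $\mathcal{F}_k$-martingale. Fixing $(s,a)\in\cS\times\cA$ and $\lambda>0$, I would apply the exponential Markov inequality together with the tower property and bound the conditional moment generating function of the increment $Y^k(s,a)=\tfrac1{m_k}\sum_{j=1}^{m_k}\big(d^k(s_{k,j}^{s,a})-\cP d^k(s,a)\big)$ via Hoeffding's lemma, using that conditionally on $\mathcal{F}_{k-1}$ each summand $\tfrac1{m_k}\big(d^k(s_{k,j}^{s,a})-\cP d^k(s,a)\big)$ is centered and lies in an interval of width at most $\sp{d^k}/m_k\le 2\infn{d^k}/m_k$. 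Feeding in $m_k\ge 2\alpha c_k\infn{d^k}^2/\varepsilon^2$, running the same induction over $k$ with $2\sum_i c_i^{-1}\le 1$, and then optimizing over $\lambda$, I would obtain $\prob(X^k(s,a)\ge\varepsilon)\le e^{-\alpha}=\delta/(2|\cS||\cA|(n+1))$, with the symmetric estimate for $\prob(X^k(s,a)\le-\varepsilon)$.

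\emph{Step 3 (union bounds) and main difficulty.} A union bound over $(s,a)\in\cS\times\cA$ gives $\prob(\infn{X^k}\ge\varepsilon)\le\delta/(n+1)$, and a second one over $k=0,\dots,n$ gives $\prob\big(\bigcup_{k=0}^n\{\infn{X^k}\ge\varepsilon\}\big)\le\delta$; passing to the complement and multiplying by $\gamma$ finishes the proof. I do not anticipate a genuine obstacle here, since the argument is structurally identical to that of Proposition~\ref{prop:sam-err0}; the only point requiring care is the book-keeping of constants, namely checking that replacing the span seminorm by the sup-norm in the sampling rule --- which forces the extra factor $2$ in $m_k$, compensating the relation $\sp{\cdot}\le 2\infn{\cdot}$ --- still leaves the Chernoff computation with enough slack to keep the per-coordinate tail probability at $e^{-\alpha}$.
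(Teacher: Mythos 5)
Your proof is correct and is essentially the paper's own argument: the paper likewise reduces the claim to the identity $T^k-\T_\gamma(Q^k)=\gamma X^k$ via telescoping with $V^{-1}=0$, and then states that the tail estimate for $\prob(\infn{X^k}\ge\varepsilon)$ proceeds exactly as in \cref{prop:sam-err0}, followed by the same two union bounds. Your book-keeping remark at the end is well taken and is the one genuinely delicate point: to absorb the factor $\sp{d^k}^2\le 4\infn{d^k}^2$ with only the extra factor $2$ in $m_k$, one must (as you do) invoke Hoeffding's lemma in its sharp interval-width form $\exp\big(\lambda^2(b-a)^2/8\big)$, since the cruder bound $\exp\big(\lambda^2 c^2/2\big)$ for $|Y^k_j|\le c$ that appears in the displayed inequality of the paper's proof of \cref{prop:sam-err0} would by itself only yield a per-coordinate tail of $e^{-\alpha/2}$, which is not enough for the final union bound.
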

\vspace{-3ex}
\begin{proof}
Let $Y^i=D^i-\cP d^i$ and $X^k=\sum_{i=0}^kY^i$.
Since $V^{-1}=0$, by telescoping $\T_\gamma(Q^k)=\R+\gamma \cP h^k=\R+\gamma \sum_{i=0}^k \cP d^i$ and then using \eqref{Eq:telescope} we get $T^k-\T_\gamma(Q^k)=\gamma \sum_{i=0}^k(D^i-\cP d^i)=\gamma X^k$. The estimation of $\prob(\|X^k\|_\infty\geq\varepsilon)$ proceeds in the exact same manner as in the proof of \cref{prop:sam-err0}.
\end{proof}

From now on we consider the specific sequences $c_k=5(k+2)\ln^2(k+2)$ and $\beta_k=k/(k+2)$, meaning that we assume \mbox{\sc (S)}. Let $Q^*$ be the unique solution of the Bellman equation $\T_\gamma(Q)=Q$. We assume throughout that
\begin{equation}\label{Eq:Ek-d}
\infn{T^k-\T_\gamma(Q^{k})}\le \varepsilon\qquad \text{for all} \,\, k=0,\ldots,n.
\end{equation}
which, in view of \cref{prop:sam-err0-d}, holds with probability at least $(1-\delta)$ when \mbox{\rm SAVID}$(Q^0,n,\varepsilon,\delta, \gamma)$ is implemented.  

Also, as noted earlier, for $D=\mbox{\sc Sample}(d,m)$ we have $\infn{D}\leq\infn{d}$, which combined with 
 the nonexpansivity of the map $Q\mapsto \maxA(Q)$ implies
\begin{equation}\label{Eq:spk-d}\infn{T^k-T^{k-1}}=\gamma \infn{D^k} \le \gamma \infn{d^k}=\gamma \infn{V^k-V^{k-1}}\le \gamma \infn{Q^{k}-Q^{k-1}}.
\end{equation}
The following two technical lemmas serve as counterparts to \cref{Le:uno,Le:dos} in this setting. Unlike the previous results, they establish estimates from the iterate $Q^k$ to $Q^*$ in the infinity norm and therefore the conclusions change slightly. As before, these lemmas act as a prelude to the general bound given by \cref{thm:err-d}.
\begin{lemma}\label{Le:uno-d}Assuming \mbox{\sc (S)} and \eqref{Eq:Ek-d}, we have that 
$$\infn{Q^k-Q^*}\leq \infn{Q^0-Q^*}+\frac{1}{3}\,\varepsilon\,k \quad \text{for all} \,\, k=0,\ldots,n.$$
\end{lemma}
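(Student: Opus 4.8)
The plan is to replicate the argument for \cref{Le:uno} almost verbatim, with the span seminorm $\sp{\cdot}$ replaced throughout by $\infn{\cdot}$, and, crucially, exploiting that $\T_\gamma$ is a genuine $\gamma$-contraction in the infinity norm rather than merely nonexpansive. This is precisely what lets us avoid the factor-$2$ loss that, in the average-reward proof, came from the bound $\sp{\cdot}\le 2\infn{\cdot}$, and hence obtain the sharper constant $\tfrac13$ here in place of $\tfrac23$ there.

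First I would expand the iteration $Q^k=(1-\beta_k)Q^0+\beta_k T^{k-1}$ with $\beta_k=k/(k+2)$ and apply the triangle inequality to get
$$\infn{Q^k-Q^*}\le \tfrac{2}{k+2}\infn{Q^0-Q^*}+\tfrac{k}{k+2}\infn{T^{k-1}-Q^*}.$$
Then, using $Q^*=\T_\gamma(Q^*)$, a triangle inequality, the hypothesis \eqref{Eq:Ek-d}, and the $\gamma$-contraction of $\T_\gamma$, I would bound
$$\infn{T^{k-1}-Q^*}=\infn{T^{k-1}-\T_\gamma(Q^*)}\le \varepsilon+\gamma\,\infn{Q^{k-1}-Q^*}\le \varepsilon+\infn{Q^{k-1}-Q^*},$$
where in the last step I simply discard $\gamma\le 1$; this is the only place the contraction constant enters, and dropping it keeps the subsequent recursion clean and $\gamma$-independent.

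Substituting back, multiplying through by $(k+1)(k+2)$, and setting $\theta_k=(k+1)(k+2)\infn{Q^k-Q^*}$ (so $\theta_0=2\infn{Q^0-Q^*}$) turns the estimate into the linear recursion $\theta_k\le \theta_0(k+1)+\varepsilon\,k(k+1)+\theta_{k-1}$, which I would unroll to $\theta_k\le \theta_0\sum_{i=1}^k(i+1)+\varepsilon\sum_{i=1}^k i(i+1)+\theta_0$. Using the elementary identities $\sum_{i=1}^k(i+1)=\tfrac12(k+1)(k+2)-1$ and $\sum_{i=1}^k i(i+1)=\tfrac13 k(k+1)(k+2)$ gives $\theta_k\le \tfrac12\theta_0(k+1)(k+2)+\tfrac13\varepsilon\,k(k+1)(k+2)$, and dividing by $(k+1)(k+2)$ yields $\infn{Q^k-Q^*}\le \infn{Q^0-Q^*}+\tfrac13\varepsilon\,k$, as claimed.

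There is no real obstacle here: the computation is the same bookkeeping as in \cref{Le:uno}. The single point needing a moment's care is to make sure the $\gamma$-contraction is invoked (rather than mere nonexpansivity) exactly so that no spurious factor of $2$ is introduced, and to confirm that bounding $\gamma$ by $1$ inside the recursion is legitimate, which it is since all quantities involved are nonnegative.
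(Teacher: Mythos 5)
Your proof is correct and follows essentially the same route as the paper, which likewise expands the Halpern recursion, bounds $\infn{T^{k-1}-Q^*}\le\varepsilon+\gamma\infn{Q^{k-1}-Q^*}\le\varepsilon+\infn{Q^{k-1}-Q^*}$, and then unrolls the same $\theta_k$ recursion as in \cref{Le:uno}. One small remark on your framing: the improvement from $\tfrac23$ to $\tfrac13$ comes from the hypothesis \eqref{Eq:Ek-d} being stated directly in the infinity norm (so no $\sp{\cdot}\le 2\infn{\cdot}$ conversion is needed), not from the $\gamma$-contraction, which you in fact discard via $\gamma\le 1$.
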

\begin{proof}
From the iteration $Q^{k}=(1-\beta_k)Q^0+\beta_k\,T^{k-1}$ with $\beta_k=\frac{k}{k+2}$ we get
\begin{equation*}
    \infn{Q^{k}-Q^*}\le \mbox{$\frac{2}{k+2}\infn{Q^0-Q^*}+\frac{k}{k+2}\infn{T^{k-1}-Q^*}$}.
\end{equation*}
Bound \eqref{Eq:Ek-d} and $\gamma \le 1$ imply
    \begin{equation}\label{Eq:cit-d}
    \infn{T^{k-1}-Q^*}=\infn{T^{k-1}-\T_\gamma(Q^*)}\le \varepsilon+\gamma \infn{Q^{k-1}-Q^*}\le  \varepsilon+\infn{Q^{k-1}-Q^*}.
\end{equation}
From here the proof follows the exact same lines as in the proof of \cref{Le:uno}.
\end{proof}

\begin{lemma}\label{Le:dos-d} Assume \mbox{\sc (S)} and \eqref{Eq:Ek-d} and let $\rho_k=2\infn{Q^0-Q^*}+\frac{1}{3}\,\varepsilon\,k$. Then 
$$\infn{Q^{k}-Q^{k-1}} \le \frac{2}{k(k+1)}\sum_{i=1}^{k}\rho_{i+2}.\quad \quad \text{for all} \,\, k=1,\ldots,n.$$
\end{lemma}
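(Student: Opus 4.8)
The plan is to follow the proof of \cref{Le:dos} essentially line by line, the only changes being the replacement of the span seminorm by $\infn{\cdot}$ and the bookkeeping of the discount factor $\gamma$, which will be harmless because $\gamma\le 1$.

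First I would extract the algebraic identity for consecutive iterates. Since the anchored recursion $Q^{k}=\frac{2}{k+2}Q^0+\frac{k}{k+2}T^{k-1}$ is the same as in the average reward setting, subtracting the corresponding expression for $Q^{k-1}$ gives, exactly as in the proof of \cref{Le:dos},
$$Q^{k}-Q^{k-1}=\frac{2}{(k+1)(k+2)}(T^{k-1}-Q^0)+\frac{k-1}{k+1}(T^{k-1}-T^{k-2}).$$
Next I would bound the two terms in $\infn{\cdot}$. For the first, a triangle inequality together with \eqref{Eq:cit-d} and \cref{Le:uno-d} yields $\infn{T^{k-1}-Q^0}\le\infn{T^{k-1}-Q^*}+\infn{Q^*-Q^0}\le 2\infn{Q^0-Q^*}+\tfrac13\varepsilon(k+2)=\rho_{k+2}$, where the constant $\tfrac13$ (in place of the $\tfrac23$ of \cref{Le:dos}) is inherited from \cref{Le:uno-d}. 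For the second term, \eqref{Eq:spk-d} gives $\infn{T^{k-1}-T^{k-2}}=\gamma\infn{D^{k-1}}\le\gamma\infn{Q^{k-1}-Q^{k-2}}\le\infn{Q^{k-1}-Q^{k-2}}$ using $\gamma\le 1$. Combining these produces the recursion $\infn{Q^{k}-Q^{k-1}}\le\frac{2}{(k+1)(k+2)}\rho_{k+2}+\frac{k-1}{k+1}\infn{Q^{k-1}-Q^{k-2}}$.

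Finally I would linearize by setting $\tilde\theta_k=k(k+1)\infn{Q^{k}-Q^{k-1}}$, which turns the recursion into $\tilde\theta_k\le\frac{2k}{k+2}\rho_{k+2}+\tilde\theta_{k-1}\le 2\rho_{k+2}+\tilde\theta_{k-1}$; unrolling from $\tilde\theta_0=0$ gives $\tilde\theta_k\le 2\sum_{i=1}^{k}\rho_{i+2}$, and dividing by $k(k+1)$ yields the claim for $k=1,\dots,n$.

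I do not expect any real obstacle here: the argument is mechanical once the discounted analogues \eqref{Eq:Ek-d}, \eqref{Eq:cit-d}, \eqref{Eq:spk-d} and \cref{Le:uno-d} are substituted for their average reward counterparts. The only point requiring care is to discard the factor $\gamma$ via $\gamma\le 1$ at exactly the right moments, so that it never degrades the constants and the recursion closes in the same telescoping form as in \cref{Le:dos}.
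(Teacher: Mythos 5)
Your proof is correct and follows essentially the same route as the paper's: the same decomposition of $Q^k-Q^{k-1}$, the same bound $\infn{T^{k-1}-Q^0}\le\rho_{k+2}$ via \eqref{Eq:cit-d} and \cref{Le:uno-d}, the discard of $\gamma\le 1$ at the same point, and the same linearization $\tilde\theta_k=k(k+1)\infn{Q^k-Q^{k-1}}$. No substantive differences.
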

\begin{proof}
From 
$Q^{k}=\frac{2}{k+2}Q^0+\frac{k}{k+2}T^{k-1}$ and
$Q^{k-1}=\frac{2}{k+1}Q^0+\frac{k-1}{k+1}T^{k-2}$
we derive
    \begin{equation*}
    Q^{k}-Q^{k-1}
    =\mbox{$ \frac{2}{(k+1)(k+2)}(T^{k-1}-Q^0) + \frac{k-1}{k+1}(T^{k-1}-T^{k-2})$}.
\end{equation*}
Now, \eqref{Eq:cit-d} together with \cref{Le:uno-d}  readily imply $\infn{T^{k-1}-Q^0}\leq \infn{T^{k-1}-Q^*} +\infn{Q^*-Q^0}\leq\rho_{k+2}$, 
and therefore using \eqref{Eq:spk-d} we get
\begin{align*}
    \infn{Q^{k}-Q^{k-1}} 
    &\le \mbox{$\frac{2}{(k+1)(k+2)}\rho_{k+2} +  \gamma\frac{k-1}{k+1}\infn{Q^{k-1}- Q^{k-2}}$}.
\end{align*}
Using that $\gamma\leq 1$, the proof finishes exactly as in \cref{Le:dos}
\end{proof}

The folowing theorem is the analogue of \cref{thm:err} for discounted MDPs. For the sake of completeness and given that the constants involved are different, we provide the full proof.
\begin{theorem}\label{thm:err-d}
Assume \mbox{\sc (S)} and let $(Q^n,T^n,\pi^n)$ be the output computed by $\mbox{\rm SAVID}(Q^0,n,\varepsilon,\delta, \gamma)$. Then, with probability at least $(1-\delta)$ we have
\vspace{-1ex}
$$  \infn{Q^n-\T_\gamma(Q^n)}\leq \frac{8\infn{Q^0-Q^*}}{n+2}+2\varepsilon,$$

\vspace{-2ex}
with a sample and time complexity of order $${O}\big(L_\gamma\,|\cS||\cA|\big(({\infn{Q^0-Q^*}^{2}+\infn{Q^0}^{2}})/{\varepsilon^{2}}+n^2\big)\big)$$ 
where  
$L_\gamma=\ln(2|\cS||\cA|(n+1)/\delta)\ln^3(n+2)$.
\end{theorem}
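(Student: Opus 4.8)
The approach is to re-run the proof of \cref{thm:err} almost line by line in the discounted setting, replacing the span seminorm $\sp{\cdot}$ by the infinity norm $\infn{\cdot}$, the operator $\T$ by the $\gamma$-contraction $\T_\gamma$, and the ingredients \cref{prop:sam-err0}, \cref{Le:uno}, \cref{Le:dos} by their discounted counterparts \cref{prop:sam-err0-d}, \cref{Le:uno-d}, \cref{Le:dos-d}. First I would invoke \cref{prop:sam-err0-d}, so that \eqref{Eq:Ek-d} holds with probability at least $1-\delta$ and the remaining steps are deterministic on that event. Using $Q^n=\tfrac{2}{n+2}Q^0+\tfrac{n}{n+2}T^{n-1}$, one writes, exactly as in \eqref{eq:lll},
$$\T_\gamma(Q^n)-Q^n=\tfrac{2}{n+2}\big(\T_\gamma(Q^n)-Q^0\big)+\tfrac{n}{n+2}\big(\T_\gamma(Q^n)-\T_\gamma(Q^{n-1})\big)+\tfrac{n}{n+2}\big(\T_\gamma(Q^{n-1})-T^{n-1}\big).$$

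Next I would bound the three summands in $\infn{\cdot}$. Since $Q^*=\T_\gamma(Q^*)$ and $\T_\gamma$ is a $\gamma$-contraction, $\infn{\T_\gamma(Q^n)-\T_\gamma(Q^*)}\le\gamma\infn{Q^n-Q^*}\le\infn{Q^n-Q^*}$, so a triangle inequality with \cref{Le:uno-d} gives $\infn{\T_\gamma(Q^n)-Q^0}\le\infn{Q^n-Q^*}+\infn{Q^*-Q^0}\le\rho_n$, where $\rho_k=2\infn{Q^0-Q^*}+\tfrac13\varepsilon k$ as in \cref{Le:dos-d}; similarly $\infn{\T_\gamma(Q^n)-\T_\gamma(Q^{n-1})}\le\infn{Q^n-Q^{n-1}}\le\tfrac{2}{n(n+1)}\sum_{i=1}^n\rho_{i+2}$ by \cref{Le:dos-d}; and the last summand is $\le\varepsilon$ by \eqref{Eq:Ek-d} together with $\tfrac{n}{n+2}\le1$. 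Substituting $\rho_k$, evaluating the arithmetic sum $\sum_{i=1}^n\rho_{i+2}$, and simplifying with the crude bounds $\tfrac{4(2n+1)}{(n+1)(n+2)}\le\tfrac{8}{n+2}$ and $\tfrac{2n}{n+2}<2$, collapses the coefficient of $\infn{Q^0-Q^*}$ to $\tfrac{8}{n+2}$ and that of $\varepsilon$ to at most $2$, yielding $\infn{\T_\gamma(Q^n)-Q^n}\le\tfrac{8\infn{Q^0-Q^*}}{n+2}+2\varepsilon$. (The improvement to $2\varepsilon$, versus $4\varepsilon$ in \cref{thm:err}, comes from \eqref{Eq:cit-d}, which loses only one $\varepsilon$ since $\gamma\le1$, against the $2\varepsilon$ lost by \eqref{Eq:cit} in the span version.)

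For the complexity I would copy the final part of the proof of \cref{thm:err}: from \eqref{Eq:spk-d} and \cref{Le:dos-d}, for $k\ge1$ one has $\infn{d^k}\le\infn{Q^k-Q^{k-1}}\le\tfrac{2}{k(k+1)}\sum_{i=1}^k\rho_{i+2}\le\tfrac{4}{k+1}\infn{Q^0-Q^*}+\varepsilon$, while $\infn{d^0}=\infn{\maxA(Q^0)}\le\infn{Q^0}$. Since $m_k\le 1+2\alpha c_k\infn{d^k}^2/\varepsilon^2$ with $c_k=5(k+2)\ln^2(k+2)$, the inequalities $(a+b)^2\le2a^2+2b^2$ and $\tfrac{k+2}{k+1}\le\tfrac32$ give
$$\sum_{k=0}^n m_k\le (n+1)+\tfrac{2\alpha}{\varepsilon^2}\Big(10\ln^2(2)\infn{Q^0}^2+\sum_{k=1}^n\big(\tfrac{240\ln^2(k+2)}{k+1}\infn{Q^0-Q^*}^2+10(k+2)\ln^2(k+2)\varepsilon^2\big)\Big),$$
and bounding the remaining sums by integrals ($\sum_{k\le n}\tfrac{\ln^2(k+2)}{k+1}=O(\ln^3(n+2))$, $\sum_{k\le n}(k+2)\ln^2(k+2)=O(n^2\ln^2(n+2))$) produces $\sum_k m_k=O\big(\alpha\infn{Q^0}^2/\varepsilon^2+\alpha\ln^3(n+2)\infn{Q^0-Q^*}^2/\varepsilon^2+\alpha n^2\ln^2(n+2)\big)$; multiplying by $|\cS||\cA|$ and absorbing $\log^2(n+2)$ into $L_\gamma=\alpha\log^3(n+2)$ with $\alpha=\ln(2|\cS||\cA|(n+1)/\delta)$ gives the claimed bound. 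I do not expect a genuine conceptual obstacle: the whole argument is a transcription of the average-reward proof, and the only care needed is with constants — in particular, checking that the factor $2$ built into $m_k$ in \mbox{\rm SAVID} (absent in \mbox{\rm SAVIA}) is precisely what absorbs the extra factor arising from $\sp{d^k}\le2\infn{d^k}$ when \cref{prop:sam-err0-d} is obtained from the Azuma--Hoeffding scheme of \cref{prop:sam-err0}, and that the coefficient $\tfrac13$ (rather than $\tfrac23$) in $\rho_k$ propagates consistently through \cref{Le:uno-d}, \cref{Le:dos-d} and the final summation.
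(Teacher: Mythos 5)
Your proposal is correct and matches the paper's own proof essentially step for step: the same decomposition of $\T_\gamma(Q^n)-Q^n$ via the recursion $Q^n=\frac{2}{n+2}Q^0+\frac{n}{n+2}T^{n-1}$, the same use of \cref{prop:sam-err0-d}, \cref{Le:uno-d} and \cref{Le:dos-d} in place of their span-seminorm counterparts, and the same bound $\infn{d^k}\le \frac{4}{k+1}\infn{Q^0-Q^*}+\varepsilon$ feeding the sample-count summation. The arithmetic (coefficient $\frac{4(2n+1)}{(n+1)(n+2)}\le\frac{8}{n+2}$ and $\varepsilon$-coefficient $\frac{2(3+8n+3n^2)}{3(n+1)(n+2)}\le 2$) and the integral estimates agree with the paper's.
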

\begin{proof}
From \cref{prop:sam-err0-d}, with probability at least $(1-\delta)$ we have $\infn{T^k-\T_\gamma(Q^k)}\leq\varepsilon$ for all $k=0,\ldots,n$. Now, the recursion $Q^n=\frac{2}{n+2}Q^0+\frac{n}{n+2}T^{n-1}$ implies
 \begin{equation}\label{eq:lll-d}
    Q^n-\T_\gamma(Q^n)
    =\mbox{$\frac{2}{n+2}\big(Q^0-\T_\gamma(Q^n)\big)+ \frac{n}{n+2}\big(T^{n-1}-\T_\gamma(Q^{n-1})\big)+ \frac{n}{n+2} \big(\T_\gamma(Q^{n-1})-\T_\gamma(Q^n)\big)$}.
\end{equation}
Using the triangle inequality and Lemma~\ref{Le:uno-d}, we have that  
$$\infn{Q^0-\T_\gamma(Q^n)} \le \infn{Q^0-Q^*}+\gamma \infn{Q^*-Q^n} \le 2\infn{Q^0-Q^*}+\mbox{$\frac{1}{3}$}\,\varepsilon\,n =  \rho_n,$$ 
while \cref{Le:dos-d} gives $\infn{\T_\gamma(Q^{n-1})-\T_\gamma(Q^n)}\leq\frac{2}{n(n+1)}\sum_{i=1}^{n}\rho_{i+2}$. Thus, applying a triangle inequality to \eqref{eq:lll-d} and using these estimates together with \cref{prop:err-d}, we obtain 
\begin{align*}
    \infn{Q^n-\T_\gamma(Q^n)}&\le \mbox{$\frac{2}{n+2}\,\rho_n+ \varepsilon+\frac{2}{(n+1)(n+2)}\sum_{i=1}^{n}\rho_{i+2}$}
   \\& = \mbox{$\frac{4(1+2n)}{(n+1)(n+2)}\infn{Q^0-Q^*}+ \frac{2(3+8n+3n^2)}{3(n+1)(n+2)}\varepsilon$}
    \\& \le \mbox{$\frac{8\infn{Q^0-Q^*}}{n+2}+2\varepsilon$}.
\end{align*}
To estimate the complexity, for $k \ge 1$, we use the inequality $\infn{d^k}\le\infn{Q^k-Q^{k-1}}$ in \eqref{Eq:spk-d} together with \cref{Le:dos-d} to find
\begin{align*}
    \mbox{$\infn{d^k}$}&\leq\mbox{$\frac{2}{k(k+1)}\sum_{i=1}^{k}\rho_{i+2}$}\\
    &=\mbox{$\frac{4}{k+1}\infn{Q^0-Q^*}$} +\mbox{$\frac{k+5}{3(k+1)}\varepsilon$}\\
    &\leq \mbox{$\frac{4}{k+1}\infn{Q^0-Q^*}+\varepsilon$}.
\end{align*}
Now, to estimate the total number of samples $|\cS||\cA|\sum_{k=0}^n m_k$ 
we recall that $m_k=\max\{\lceil 2 \alpha c_k\infn{d^{k}}^2/\varepsilon^{2}\rceil,1\}$ which can be bounded as $m_k\leq 1+2\alpha c_k\infn{d^{k}}^2/\varepsilon^{2}$. Then 
\begin{align}
    \mbox{$\sum_{k=0}^nm_k$}&\leq\mbox{$ (n+1)+\frac{2\alpha}{\varepsilon^{2}}\sum_{k=0}^n c_k\infn{d^{k}}^2$} \nonumber\\
   &\leq\mbox{$ (n+1)+\frac{20\, \alpha}{\varepsilon^{2}} \ln^2(2)\,\infn{Q_0}^2+\frac{10\alpha}{\varepsilon^{2}}\sum_{k=1}^n (k+2)\ln^2(k+2)\big(\frac{4}{k+1}\infn{Q^0-Q^*}+\varepsilon\big)^2$} \nonumber\\
      &\leq\mbox{$ (n+1)+\frac{20\, \alpha}{\varepsilon^{2}} \ln^2(2)\infn{Q_0}^2+\sum_{k=1}^n\frac{480\,\alpha}{\varepsilon^{2}(k+2)}\ln^2(k+2)\infn{Q^0-Q^*}^2+20\,\alpha\sum_{k=1}^n(k+2)\ln^2(k+2)$} \nonumber\\
   &=O\!\left( \alpha \infn{Q^0}^2/\varepsilon^{2}+\mbox{$\alpha\ln^3(n+2)\infn{Q^0-Q^*}^2/\varepsilon^{2}+\alpha \,n^2\ln^2(n+2)$}\right),
   \label{eq:last-d}
\end{align}
where we use the same estimations as in the proof of \cref{thm:err}.
\end{proof}

\begin{manualtheorem}{4.2}
   Assume \mbox{\sc (S)}, $ r(s,a) \in [0,1]$ for all $(s,a)\in\cS\times \cA$, and $n\!=\!\lceil{10/((1\!-\!\gamma)\varepsilon)}\rceil$. Let $(Q^n,T^n,\pi^n)$ be the output of $ \mbox{\rm SAVID}(Q^0,n, \epsilon/10, \delta, \gamma)$ with $Q^0=0$ and $\varepsilon \le 1/(1\!-\!\gamma)$. Then, with probability at least $(1-\delta)$ we have
\vspace{-1ex}
  $$ \infn{Q^n-\T_\gamma(Q^n)}\leq \varepsilon$$
 
  \vspace{-2ex}
   with sample and time complexity ${O}\big(L_{\gamma}|\cS||\cA|/((1\!-\!\gamma)^2\varepsilon^{2})\big)$
   where $L_{\gamma}=\ln (2|\cS||\cA|/((1\!-\!\gamma)\varepsilon \delta))\ln^3 (2/((1\!-\!\gamma)\varepsilon)) $.
\end{manualtheorem}
\begin{proof}
   The proof follows directly  from  \cref{thm:err-d} and the bound $\infn{Q^*} \le 1/(1\!-\!\gamma)$.
\end{proof}

\subsection{Proofs of \cref{Se:complexity2-d}}
We now proceed to establish the complexity of the algorithm \mbox{\rm SAVID\textbf{+}}. Again, the proof is almost the same as in the average reward case, where some of the constants must be estimated differently. Precisely, the time in which the inner loop in \mbox{\rm SAVID\textbf{+}} stops will now depend on $\infn{Q^0 - Q^*}$. 

Under a slight abuse of notation, we define the stopping time of \mbox{\rm SAVID\textbf{+}} as
$$
 \st =\inf \{ { n_i \in \mathbb N \,:\, \infn{Q^{n_i}-T^{n_i}} \le 11\,\varepsilon}\},
$$
where $T^{n_i}$ and $Q^{n_i}$'s are the iterates generated in each loop of \mbox{\rm SAVID\textbf{+}}$(Q^0,\varepsilon,\delta, \gamma)$. As before, we let $i_0 \in \NN$ be the smallest integer satisfying $n_{i_0}\ge\infn{Q^0-Q^{*}}/\varepsilon$, so that 
either $i_0=0$ and $n_{i_0}=1$ or $n_{i_0-1}=n_{i_0}/2<\infn{Q^0-Q^{*}}/\varepsilon$, which combined imply $n_{i_0}\le 1+2\infn{Q^0-Q^{*}}/\varepsilon$. Also, as in the average reward setting, we consider the events
    \begin{equation*}
S_{i}=\{\infn{Q^{n_i}-T^{n_i}} \le 11\,\varepsilon\} \quad \text{and} \quad G_i=\{ \infn{T^k-\T_\gamma (Q^k)} \le \varepsilon,\;\forall k=0,\ldots,n_i\},
\end{equation*}
with $Q^{k}$ and $T^{k}$'s the inner iterates generated during the execution of \mbox{\rm SAVID}$(Q^0,n_i,\varepsilon,\delta_i, \gamma)$ in the $i$-th loop of  \mbox{\rm SAVID\textbf{+}}. 
    \begin{lemma}\label{Le:tres-d}
       Assume \mbox{\sc (S)}. Then, for all $i \ge i_0$ we have that $\prob (S_{i}) \ge  \prob (G_{i}) \ge 1-\delta_{i}$.
    \end{lemma}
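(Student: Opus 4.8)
The plan is to imitate the proof of \cref{Le:tres} in the discounted setting, exploiting the fact that all the discounted counterparts of the relevant results are already in place. The statement packages two inequalities, $\prob(S_i)\ge\prob(G_i)$ and $\prob(G_i)\ge 1-\delta_i$, and the second one is essentially free: applying \cref{prop:sam-err0-d} to the $i$-th inner call $\mbox{\rm SAVID}(Q^0,n_i,\varepsilon,\delta_i,\gamma)$ gives $\|T^k-\T_\gamma(Q^k)\|_\infty\le\gamma\varepsilon\le\varepsilon$ simultaneously for all $k=0,\ldots,n_i$ with probability at least $1-\delta_i$; this event is contained in $G_i$, so $\prob(G_i)\ge 1-\delta_i$. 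Hence everything reduces to proving the deterministic inclusion $G_i\subseteq S_i$ for $i\ge i_0$, from which $\prob(S_i)\ge\prob(G_i)$ follows by monotonicity of $\prob$.

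To establish $G_i\subseteq S_i$, I would fix $\omega\in G_i$ and split by the triangle inequality
$\infn{T^{n_i}(\omega)-Q^{n_i}(\omega)}\le\infn{T^{n_i}(\omega)-\T_\gamma(Q^{n_i})(\omega)}+\infn{\T_\gamma(Q^{n_i})(\omega)-Q^{n_i}(\omega)}$.
The first summand is at most $\varepsilon$ directly from the definition of $G_i$. For the second summand, the key observation — exactly as in \cref{Le:tres} — is that on $G_i$ the hypothesis \eqref{Eq:Ek-d} holds along the whole run, so the conclusion of \cref{thm:err-d} applies as a deterministic implication, giving $\infn{\T_\gamma(Q^{n_i})(\omega)-Q^{n_i}(\omega)}\le \tfrac{8\infn{Q^0-Q^*}}{n_i+2}+2\varepsilon$. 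Since $i\ge i_0$ we have $n_i\ge n_{i_0}\ge\infn{Q^0-Q^*}/\varepsilon$, hence $\tfrac{8\infn{Q^0-Q^*}}{n_i+2}\le 8\varepsilon$, and adding the two bounds yields $\infn{T^{n_i}(\omega)-Q^{n_i}(\omega)}\le\varepsilon+8\varepsilon+2\varepsilon=11\varepsilon$, i.e. $\omega\in S_i$. This closes the argument.

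I do not expect a real obstacle here; the only thing that needs care is the arithmetic of the constants, which must land exactly at the threshold $11\varepsilon$ hard-coded in the stopping rule of \mbox{\rm SAVID\textbf{+}}. This is why the slack term in \cref{thm:err-d} is $2\varepsilon$ rather than the $4\varepsilon$ appearing in \cref{thm:err} (we now work with $\|\cdot\|_\infty$ and avoid the $\sp{\cdot}\le 2\|\cdot\|_\infty$ loss) and why the residual estimate on $G_i$ contributes a single $\varepsilon$ instead of $2\varepsilon$. I would also double-check that the version of $i_0$ in force is the one defined just above the lemma, namely the least $i$ with $n_{i_0}\ge\infn{Q^0-Q^*}/\varepsilon$, so that the bound $\tfrac{8\infn{Q^0-Q^*}}{n_i+2}\le 8\varepsilon$ is legitimate for every $i\ge i_0$.
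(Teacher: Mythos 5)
Your proposal is correct and follows the same route as the paper: reduce to the deterministic inclusion $G_i\subseteq S_i$, invoke \cref{prop:sam-err0-d} for $\prob(G_i)\ge 1-\delta_i$, and bound $\infn{T^{n_i}-Q^{n_i}}$ on $G_i$ by the triangle inequality combined with the residual bound of \cref{thm:err-d} and $n_i\ge n_{i_0}\ge \infn{Q^0-Q^*}/\varepsilon$, arriving at $\varepsilon+8\varepsilon+2\varepsilon=11\varepsilon$. The constant accounting you highlight (one $\varepsilon$ instead of $2\varepsilon$ for the sampling error, $2\varepsilon$ instead of $4\varepsilon$ in the theorem) is exactly how the paper reaches the $11\varepsilon$ threshold.
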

    \begin{proof} The proof is the same as its average reward counterpart \cref{Le:tres} by just observing that, if $i \ge i_0$ then, for all $\omega \in G_i$,
$$
\begin{aligned}
\infn{Q^{n_i}(\omega)-T^{n_i}(\omega)} &\le  \infn{Q^{n_i}(\omega)-\T_\gamma(Q^{n_i})(\omega)}+\infn{\T_\gamma(Q^{n_i})(\omega)-T^{n_i}(\omega)}\\
&\le \mbox{$\frac{8\infn{Q^0-Q^*}}{n_i+2}+2\,\varepsilon+\varepsilon$} \\
&\le 11\,\varepsilon.
\end{aligned}
$$

\vspace{-5ex}
    \end{proof}

\begin{theorem}{\label{thm:err-D-d}}
Assume \mbox{\sc (S)} and let $(Q^{N},T^{N},\pi^{N})$ be the output of $\mbox{\sc \mbox{\rm SAVID\textbf{+}}}(Q^0,\varepsilon,\delta, \gamma)$. Then, with probability at least $(1-\delta)$ we have 
\vspace{-1ex}
$$  \infn{Q^N-\T_\gamma(Q^N)}\leq 12\,\varepsilon$$
 with sample and time complexity $${O}\left(\widehat{L}_{\gamma}|\cS||\cA|\left(
 (\infn{Q^0}+\infn{Q^0-Q^*})^2/\varepsilon^{2}+1\right)\right),$$ where $\widehat{L}_{\gamma}=\ln\!\big(|\cS||\cA|\,(1\!+\!2\infn{Q^0-Q^*}/\varepsilon)/\delta \big)\,\ln^4(1\!+\!2\infn{Q^0-Q^*}/\varepsilon).$ 
\end{theorem}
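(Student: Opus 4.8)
The plan is to mirror the proof of \cref{thm:err-D} essentially line by line, replacing the span seminorm $\sp{\cdot}$ by the infinity norm $\infn{\cdot}$ and using the discounted analogues \cref{prop:sam-err0-d}, \cref{thm:err-d} and \cref{Le:tres-d} in place of \cref{prop:sam-err0}, \cref{thm:err} and \cref{Le:tres}. Recall that $i_0$ is the smallest index with $n_{i_0}\ge\infn{Q^0-Q^*}/\varepsilon$, so $n_{i_0}\le 1+2\infn{Q^0-Q^*}/\varepsilon$; write $I$ for the loop at which \mbox{\rm SAVID\textbf{+}} terminates, so that $N=2^I$. Introduce the events $A=\{I\le i_0\}$ and $B=\bigcap_{i=0}^\infty G_i$, where the $G_i$ are the good events defined above.

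First I would establish $\prob(A\cap B)\ge 1-\delta$. By \cref{prop:sam-err0-d} we have $\prob(G_i^c)\le\delta_i=\delta/c_i$, so a union bound with $2\sum_i c_i^{-1}\le 1$ gives $\prob(B^c)\le\delta/2$; and \cref{Le:tres-d} gives $\prob(A)\ge\prob(S_{i_0})\ge\prob(G_{i_0})\ge 1-\delta_{i_0}\ge 1-\delta/2$, hence $\prob(A^c\cup B^c)\le\delta$. On the event $A\cap B$ the algorithm stops at a loop $I\le i_0$, so $N=2^I\le n_{i_0}<\infty$; the stopping rule gives $\infn{T^N-Q^N}\le 11\varepsilon$, and $B\subseteq G_I$ gives $\infn{T^N-\T_\gamma(Q^N)}\le\varepsilon$, so by the triangle inequality $\infn{\T_\gamma(Q^N)-Q^N}\le 12\varepsilon$. (If the policy-error form is wanted it then follows from \cref{prop:err-d}.)

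For the sample complexity, on the event $A$ the total number of samples is $M=\sum_{i=0}^I M_i\le\sum_{i=0}^{i_0}M_i$, and \eqref{eq:last-d} bounds each $M_i$ by $|\cS||\cA|\,O\!\big(\alpha_i\infn{Q^0}^2/\varepsilon^2+\alpha_i\ln^3(n_i+2)\infn{Q^0-Q^*}^2/\varepsilon^2+\alpha_i n_i^2\ln^2(n_i+2)\big)$ with $\alpha_i=\ln(2|\cS||\cA|(n_i+1)/\delta_i)$. Since these summands are increasing in $i$, I would bound $M\le(i_0+1)M_{i_0}$, then use $n_{i_0}^2\le(1+2\infn{Q^0-Q^*}/\varepsilon)^2=O(\infn{Q^0-Q^*}^2/\varepsilon^2+1)$ together with $i_0=O(\log n_{i_0})$ so that $(i_0+1)\ln^3(n_{i_0}+2)$ is $O(\log^4(1+2\infn{Q^0-Q^*}/\varepsilon))$, while $\alpha_{i_0}$ (which absorbs the harmless factor $c_{i_0}$ coming from $\delta_{i_0}=\delta/c_{i_0}$) contributes only the logarithmic term appearing in $\widehat{L}_\gamma$. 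Regrouping terms of the same order and using $\max\{\infn{Q^0},\infn{Q^0-Q^*}\}^2\le(\infn{Q^0}+\infn{Q^0-Q^*})^2$ yields $M\le\widehat{L}_\gamma|\cS||\cA|\,O\!\big((\infn{Q^0}+\infn{Q^0-Q^*})^2/\varepsilon^2+1\big)$, as claimed.

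The main obstacle is, exactly as for \cref{thm:err-D}, bookkeeping rather than conceptual: one must track the $12\varepsilon/11\varepsilon/\varepsilon$ constants through the triangle inequalities and, more delicately, verify that the doubling overhead $(i_0+1)$ together with the per-loop factors $\alpha_{i_0}$, $\ln^2(n_{i_0}+2)$, $\ln^3(n_{i_0}+2)$ and the probability deflation $\delta_{i_0}=\delta/c_{i_0}$ all collapse into the single factor $\widehat{L}_\gamma$. No genuinely new idea beyond the average-reward analysis is required, since \cref{thm:err-d}, \cref{prop:sam-err0-d} and \cref{Le:tres-d} already supply discounted counterparts of every ingredient used in the proof of \cref{thm:err-D}.
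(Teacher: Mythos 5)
Your proposal is correct and follows essentially the same route as the paper: the same events $A=\{I\le i_0\}$ and $B=\bigcap_i G_i$, the same union bound giving $\prob(A\cap B)\ge 1-\delta$ via \cref{prop:sam-err0-d} and \cref{Le:tres-d}, the same triangle inequality yielding $12\varepsilon$, and the same complexity bookkeeping from \eqref{eq:last-d} with the bound $n_{i_0}\le 1+2\infn{Q^0-Q^*}/\varepsilon$ and the absorption of the $(i_0+1)$ and logarithmic factors into $\widehat{L}_\gamma$. No gaps.
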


\begin{proof}
Consider the events $A=\{I\leq i_0\}$ and $B=\bigcap_{i=0}^\infty G_i$. Using the exact same argument as in the proof of \cref{thm:err-D}, now through \cref{prop:sam-err0-d} and \cref{Le:tres-d}, we know that $\prob(A\cap B)\geq 1-\delta$. Also, on the event $A\cap B$ and from the definition of $N$, we have
\begin{align*}
    \infn{Q^N-\T_\gamma(Q^N)}&\leq \infn{Q^N-T^N}+\infn{T^N-\T_\gamma(Q^N)}\leq 11\,\varepsilon+\varepsilon=12\,\varepsilon.
\end{align*} 
Now, using \eqref{eq:last-d} in the proof of \cref{thm:err-d}
the total sample complexity $M$ of the algorithm can be estimated, as in the average reward case, by
$$M\leq |\cS||\cA|\sum^{i_0}_{i=0}O\!\left(\alpha_i \infn{Q^0}^2/\varepsilon^{2}+ \mbox{$\alpha_i\ln^3(n_i+2)\infn{Q^0-Q^*}^2/\varepsilon^{2}+\alpha_i\,n_i^2\ln^2(n_i+2)$}\right),$$
where $\alpha_i= \ln(2|\cS||\cA|(n_i\!+\!1)/\delta_i)$ is the parameter defined in the $i$-th cycle of \mbox{\rm SAVID\textbf{+}}. Again, as for \mbox{\rm SAVIA\textbf{+}}, we use that $n_{i_0}^2 \leq (1+2\infn{Q^0-Q^{*}}/\varepsilon)^2=O(\infn{Q^0-Q^*}^2/\varepsilon^2 + 1)$, to get 
$$
\begin{aligned}
M &\leq |\cS||\cA| \;  O\!\left(\alpha_{i_0} \infn{Q^0}^2/\varepsilon^{2} \ln(n_{i_0}\!+2)+ \mbox{$\alpha_{i_0}\ln^4(n_{i_0}\!+2)\infn{Q^0-Q^*}^2/\varepsilon^{2}+\alpha_{i_0}\,n_{i_0}^2\ln^4(n_{i_0}\!+2)$}\right)\\
&\leq |\cS||\cA|\, \alpha_{i_0}\ln^4(n_{i_0}\!+2)\; O\!\left( (\infn{Q^0}+\infn{Q^0-Q^*})^2/\varepsilon^{2}+1\right).
\end{aligned}
$$
\end{proof}
Finally, we state our claimed complexity result for \mbox{\rm SAVID\textbf{+}}, whose proof follows directly from \cref{prop:sam-err0-d}, \cref{thm:err-D-d}, and the fact that $\infn{Q^*} \le 1/(1\!-\!\gamma)$.
\begin{manualtheorem}{\ref{cor:err2-d}}
Assume \mbox{\sc (S)}, $  r(s,a) \in [0,1]$ for all $(s,a)\in \cS\times \cA$, and $\infn{Q^*} \ge 1$. Let $(Q^N,T^N,\pi^N)$ be the output of $ \mbox{\rm SAVID\textbf{+}}(Q^0,\varepsilon(1\!-\!\gamma)/24,\delta, \gamma)$ with $Q^0=0$ and $\varepsilon \le 1/(1\!-\!\gamma)$. Then, with probability at least $(1-\delta)$ we have
\vspace{-1ex}
  $$ \infn{Q^*-Q_{\pi^N}}\leq 2\infn{Q^N-\T_\gamma(Q^N)}/(1\!-\!\gamma) \leq \varepsilon$$
 
  \vspace{-2ex}
   with sample and time complexity $${O\!}\left(\widetilde{L}_{\gamma}|\cS||\cA|\infn{Q^*}^{2}/((1\!-\!\gamma)^2\varepsilon^{2})\right)$$
   where $\widetilde{L}_{\gamma}= \ln \big(2|\cS||\cA|/((1\!-\!\gamma)\varepsilon \delta)\big)\ln^4 \big(2/((1\!-\!\gamma)\varepsilon)\big)$.
\end{manualtheorem}

\section{The Anchored Value Iteration as Halpern's iteration on a quotient space} \label{ap:quotient}
The recursion \eqref{Halpern} involves the unknown optimal value $g^*$.
However, since $\T(Q+c)=\T(Q)+c$ is homogeneous under addition of constants $c\in\reals$, 
this prompts us to consider the map  $\widetilde{\T}:\X\to  \X$ defined by 
$\widetilde{\T}([Q])=[\T(Q)]$ on the quotient space $\X=\reals^{\cS\times\cA}/E$ with $E$ the subspace of constant matrices, endowed with 
the induced quotient norm
$$\big\|[Q]\big\|_E=\min_{c\in\reals}\big\|Q+c\big\|_\infty=\frac{1}{2}\sp{Q}.$$
One can readily check that $\widetilde{\T}$ is nonexpansive for the norm $\|\cdot\|_E$
and moreover\\[1ex]
{\sc Claim:} $Q^*\in\Fix(\T_{g^*})$ if and only if $[Q^*]\in\Fix(\widetilde{\T})$.\\[0.5ex]
{\em Proof.} If  $Q^*=\T_{g^*}(Q^*)$  then $[Q^*]=[\T(Q^*\!-\!g^*)]=\widetilde{\T}([Q^*\!-\!g^*])=\widetilde{\T}([Q^*])$.
Conversely, if $[Q^*]=\widetilde{\T}([Q^*])=[\T(Q^*)]$ then
$Q^*=\T(Q^*)+c$ for some $c\in\reals$, and \citet[Theorem 9.1.2]{10.5555/528623} 
gives $c=-g^*$ so that $Q^*\in\Fix(\T_{g^*})$.\hfill$\Box$

\vspace{1ex}
Projecting \eqref{Halpern} on the quotient space $\X$ it follows that the equivalence classes $[Q^k]=Q^k+E$ of the iterates $\{Q^k\}_{k\in\NN}$ 
generated by \eqref{Halpern} satisfy the following Halpern's iteration for $\widetilde{\T}(\cdot)$
\begin{equation*}
[Q^{k+1}]=(1-\beta_{k+1})[Q^0]+ \beta_{k+1}\, \widetilde{\T}([Q^{k}]).
\end{equation*}
In this projected iteration the unknown $g^*$ plays no role. Moreover, the equivalence classes $[Q^n]$ coincide with those generated by the implementable
modification \eqref{eq:Anc-VI} in which $g^*$ is ignored. 
This shows that, modulo constants, both \eqref{Halpern} and \eqref{eq:Anc-VI} are equivalent and their corresponding residuals in span seminorm coincide.

Finally, using the identity
$\sp{Q-\T(Q)}=2\big\|[Q]-\widetilde{\T}([Q])\big\|$,
any error bound for Halpern's iteration as applied to $\widetilde{\T}$ directly transfers into a  bound for $\sp{Q^k-\T(Q^k)}$. 
In particular, for $\beta_k=k/(k+2)$ we get
$$\sp{Q^k-\T(Q^k)}\leq\mbox{$\frac{4}{k+1}$}\sp{Q^0-Q^*}.$$

\end{document}